\theoremstyle{definition}
\newtheorem{THM}{Theorem}
\newtheorem{LEM}[THM]{Lemma}
\newtheorem{DEF}[THM]{Definition}
\newtheorem{RMK}[THM]{Remark}
\newtheorem*{THM*}{Theorem}
\newtheorem*{LEM*}{Lemma}
\newtheorem*{PROP*}{Proposition}
\newtheorem*{COR*}{Corollary}
\newtheorem*{DEF*}{Definition}
\newtheorem*{RMK*}{Remark}
\newtheorem*{EX*}{Example}
\numberwithin{figure}{section}
\numberwithin{equation}{section}
\numberwithin{THM}{section}
\title{$A_2$ colored polynomials of rigid vertex graphs}
\author{Wataru Yuasa}
\address{Department of Mathematics\\
  Tokyo Institute of Technology\\
  2-12-1 Ookayama, Meguro-ku, Tokyo 152-8551, Japan}
\email[]{yuasa.w.aa@m.titech.ac.jp}
\begin{document}
%%%%%%%%%%%%%%%%%%%%
%%%%% abstract %%%%%
%%%%%%%%%%%%%%%%%%%%
\begin{abstract}
The Kauffman-Vogel polynomials are three variable polynomial invariants of $4$-valent rigid vertex graphs. 
A one-variable specialization of the Kauffman-Vogel polynomials for unoriented $4$-valent rigid vertex graphs was given by using the Kauffman bracket and the Jones-Wenzl idempotent colored with $2$. 
Bataineh, Elhamdadi and Hajij generalized it to any color with even positive integers. 
We give another generalization of the one-variable Kauffman-Vogel polynomial for oriented and unoriented $4$-valent rigid vertex graphs by using the $A_2$ bracket and the $A_2$ clasps. 
These polynomial invariants are considered as the $\mathfrak{sl}_3$ colored Jones polynomials for singular knots and links. 
\end{abstract}
\maketitle
%\tableofcontents
%\thispagestyle{empty}
%\pagestyle{empty}
%%%%%%%%%%%%%%%%%%%%%%%%
%%%%% middle arrow %%%%%
%%%%%%%%%%%%%%%%%%%%%%%%
\tikzset{->-/.style={decoration={
  markings,
  mark=at position #1 with {\arrow[black,thin]{>}}},postaction={decorate}}}
\tikzset{-<-/.style={decoration={
  markings,
  mark=at position #1 with {\arrow[black,thin]{<}}},postaction={decorate}}}
\tikzset{-|-/.style={decoration={
  markings,
  mark=at position #1 with {\arrow[black,thin]{|}}},postaction={decorate}}}
%%%%%%%%%%%%%%%%%%%%%%
%%%% triple line %%%%%
%%%%%%%%%%%%%%%%%%%%%%
\tikzset{
    triple/.style args={[#1] in [#2] in [#3]}{
        #1,preaction={preaction={draw,#3},draw,#2}
    }
}

%%%%%%%%%%%%%%%%%%%%%
%%%%% body text %%%%%
%%%%%%%%%%%%%%%%%%%%%
\section{Introduction}
Kauffman considered an isotopy for embeddings of graphs in the $3$-space in \cite{Kauffman89}. 
It is called the vertex isotopy and he proved the vertex isotopy for a $4$-valent graph is generated by a generalization of the Reidemeister moves. 
Kauffman and Vogel defined polynomial invariants for regular vertex isotopy classes of $4$-valent graphs in \cite{KauffmanVogel92}.
These polynomial invariants are three-variable generalizations of polynomial invariants of knots: the Homfly polynomials for oriented knots and the Kauffman polynomials~\cite{Kauffman90} for unoriented knots. 
A one-variable specialization of the Kauffman-Vogel polynomial of a regular vertex isotopy class of an unoriented $4$-valent graph was given by using the Kauffman bracket and the Jones-Wenzl idempotents (see, for example, Sect.~4.4 in \cite{KauffmanLins94}). 
For an unoriented $4$-valent graph $G$, 
this polynomial is defined by the value of the Kauffman bracket of a skein element obtained by coloring each edge of $G$ with $2$ and replacing $4$-valent vertices by a certain type of skein element. 
In~\cite{BatainehElhamdadiHajij16}, 
Bataineh, Elhamdadi and Hajij generalize the one-variable Kauffman-Vogel polynomials by changing the coloring from $2$ to any even positive integer $2n$. 
There are many invariants of vertex isotopy classes of graphs, for example, Kauffman and Mishra~\cite{KauffmanMishra13}, 
Juyumaya and Lambropoulou~\cite{JuyumayaLambropoulou09} as an invariant of singular knots,  Yamada~\cite{Yamada89} as an invariant of spatial graphs, and so on. 
In \cite{Wu12}, 
Wu showed a relationship between the Kauffman-Vogel polynomial and the MOY graph polynomial~\cite{MurakamiOhtsukiYamada98}. 
By using linear skein theory, 
Some invariants of topological graphs were constructed, for example, in Yokota~\cite{Yokota96} and Kawagoe~\cite{Kawagoe16}.

In this paper, 
we will define one-variable polynomial invariants of the regular vertex isotopy classes of oriented and unoriented $4$-valent graphs. 
These polynomial invariants are a skein theoretical generalization of the one-variable Kauffman-Vogel polynomials. 
We will construct the invariants by using the $A_2$ bracket and the $A_2$ clasps instead of the Kauffman bracket and the Jones-Wenzl idempotents.

This paper is organized as follows. 
We introduce the definition of a $4$-valent rigid vertex graph by diagrams on $S^2$ and the generalized Reidemeister moves in Sect.~2. 
Next, 
we define the $A_2$ bracket, the $A_2$ clasps and show some useful formulas in Sect.~3. 
In Sect.~4, 
we define the polynomial invariants of oriented and unoriented $4$-valent rigid vertex graphs. 
In Sect.~5, 
we compute these invariants for some $4$-valent rigid vertex graphs.

\section{rigid vertex graphs}
We will treat diagrammatically regular vertex isotopy classes of embeddings of oriented and unoriented $4$-valent graphs in $S^3$ through an equivalence class of $4$-valent graph diagrams on $S^2$. 
We briefly explain the geometric definition of the rigid vertex graphs (see Kauffman~\cite{Kauffman89} for details.) 
The rigid vertex means that the half-edges attaching to the vertex have a cyclic ordering.
An embedding of a $4$-valent rigid vertex graphs into $S^3$ is an embedding of the underlying $4$-valent graph into $S^3$ with the following condition. 
Each embedded vertex $v$ can be replaced by a small disk $D_v$ in $S^3$ and half-edges at $v$ are attached to $\partial D_v$ such that the cyclic ordering coincides with the orientation of $\partial D_v$.

We deal with the regular isotopy classes of the above graphs in $S^3$ as diagrams on $S^2$ with an equivalence relation generated by Reidemeister moves (RI) -- (RV).
\begin{DEF}\ 
\begin{itemize}
\item A $4$-valent graph diagram on $S^2$ is an immersion of $4$-valent graph into $S^2$ whose intersection points are only transverse double points of edges. 
At each intersection point, two edges are equipped with crossing data 
\,\tikz[baseline=-.6ex, scale=.8]{
\draw [thin, dashed, fill=white] (0,0) circle [radius=.5];
\draw[->-=.8] (-45:.5) -- (135:.5);
\draw[->-=.8, white, double=black, double distance=0.4pt, ultra thick] (-135:.5) -- (45:.5);
}\ .
\item Two $4$-valent graph diagrams $G$ and $G'$ are equivalent if $G$ is related to $G'$ by a finite sequence of Reidemeister moves (RI) -- (RV) in Fig.~\ref{Reidemeister}. 
This equivalence relation is called {\em regular vertex isotopy} in \cite{Kauffman89}.
\item An oriented $4$-valent graph diagram is a $4$-valent graph diagram whose edges are oriented as one of the following:
\[
\,\tikz[baseline=-.6ex, scale=.8]{
\draw [thin, dashed, fill=white] (0,0) circle [radius=.5];
\draw[-<-=.5] (45:.5) -- (0:0);
\draw[-<-=.5] (135:.5) -- (0:0);
\draw[->-=.5] (-45:.5) -- (0:0);
\draw[->-=.5] (-135:.5) -- (0:0);
\draw[fill=cyan] (0,0) circle [radius=.08];
}\,\ \text{and}\ 
\,\tikz[baseline=-.6ex, scale=.8]{
\draw [thin, dashed, fill=white] (0,0) circle [radius=.5];
\draw[->-=.5] (45:.5) -- (0:0);
\draw[-<-=.5] (135:.5) -- (0:0);
\draw[-<-=.5] (-45:.5) -- (0:0);
\draw[->-=.5] (-135:.5) -- (0:0);
\draw[fill=cyan] (0,0) circle [radius=.08];
}\, .
\]
\end{itemize}
\end{DEF}

We call equivalence classes of $4$-valent graph diagrams {\em $4$-valent rigid vertex graphs}. 

\begin{figure}
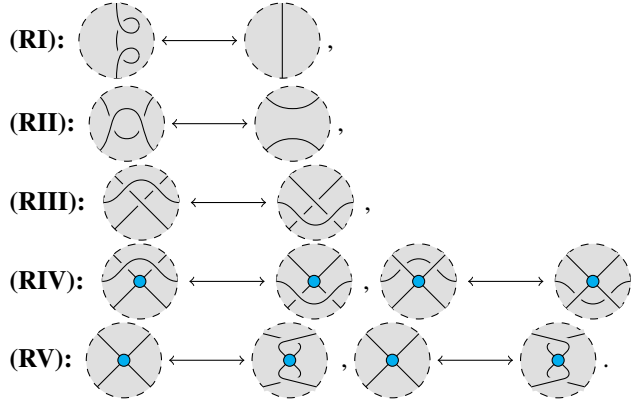

\centering
\begin{description}
\item[(RI)]
\tikz[baseline=-.6ex]{
\draw [thin, dashed, fill=lightgray!50] (0,0) circle [radius=.5];
\draw (.3,-.2)
to[out=south, in=east] (.2,-.3)
to[out=west, in=south] (.0,.0)
to[out=north, in=west] (.2,.3)
to[out=east, in=north] (.3,.2);
\draw[lightgray!50, double=black, double distance=0.4pt, ultra thick] (0,-.5) 
to[out=north, in=west] (.2,-.1)
to[out=east, in=north] (.3,-.2);
\draw[lightgray!50, double=black, double distance=0.4pt, ultra thick] (.3,.2)
to[out=south, in=east] (.2,.1)
to[out=west, in=south] (0,.5);
}
\tikz[baseline=-.6ex]{
\draw [<->, xshift=1.5cm] (1,0)--(2,0);
}
\tikz[baseline=-.6ex]{
\draw[xshift=3cm, thin, dashed, fill=lightgray!50] (0,0) circle [radius=.5];
\draw[xshift=3cm] (90:.5) to (-90:.5);
}\ ,
\item[(RII)]
\tikz[baseline=-.6ex]{
\draw [thin, dashed, fill=lightgray!50] (0,0) circle [radius=.5];
\draw (135:.5) to [out=south east, in=west](0,-.2) to [out=east, in=south west](45:.5);
\draw [lightgray!50, double=black, double distance=0.4pt, ultra thick](-135:.5) to [out=north east, in=left](0,.2) to [out=right, in=north west] (-45:.5);
}
\tikz[baseline=-.6ex]{
\draw [<->, xshift=1.5cm] (1,0)--(2,0);
}
\tikz[baseline=-.6ex]{
\draw[xshift=3cm, thin, dashed, fill=lightgray!50] (0,0) circle [radius=.5];
\draw[xshift=3cm] (135:.5) to [out=south east, in=west](0,.2) to [out=east, in=south west](45:.5);
\draw[xshift=3cm] (-135:.5) to [out=north east, in=left](0,-.2) to [out=right, in=north west] (-45:.5);
}\ ,
\item[(RIII)]
\tikz[baseline=-.6ex]{
\draw [thin, dashed, fill=lightgray!50] (0,0) circle [radius=.5];
\draw (-135:.5) -- (45:.5);
\draw [lightgray!50, double=black, double distance=0.4pt, ultra thick] (135:.5) -- (-45:.5);
\draw[lightgray!50, double=black, double distance=0.4pt, ultra thick](180:.5) to [out=right, in=left](0,.3) to [out=right, in=left] (-0:.5);
}
\tikz[baseline=-.6ex]{
\draw[<->, xshift=1.5cm] (1,0)--(2,0);
}
\tikz[baseline=-.6ex]{
\draw [xshift=3cm, thin, dashed, fill=lightgray!50] (0,0) circle [radius=.5];
\draw [xshift=3cm] (-135:.5) -- (45:.5);
\draw [xshift=3cm, lightgray!50, double=black, double distance=0.4pt, ultra thick] (135:.5) -- (-45:.5);
\draw[xshift=3cm, lightgray!50, double=black, double distance=0.4pt, ultra thick](180:.5) to [out=right, in=left](0,-.3) to [out=right, in=left] (-0:.5);
}\ ,
\item[(RIV)]
\tikz[baseline=-.6ex]{
\draw [thin, dashed, fill=lightgray!50] (0,0) circle [radius=.5];
\draw (-135:.5) -- (45:.5); 
\draw (-45:.5) -- (135:.5); 
\draw[lightgray!50, double=black, double distance=0.4pt, ultra thick](180:.5) to [out=right, in=left](0,.3) to [out=right, in=left] (-0:.5);
\draw[fill=cyan] (0,0) circle [radius=.08];
}
\tikz[baseline=-.6ex]{
\draw[<->, xshift=1.5cm] (1,0)--(2,0);
}
\tikz[baseline=-.6ex]{
\draw [thin, dashed, fill=lightgray!50] (0,0) circle [radius=.5];
\draw (-135:.5) -- (45:.5); 
\draw (-45:.5) -- (135:.5); 
\draw[lightgray!50, double=black, double distance=0.4pt, ultra thick](180:.5) to [out=right, in=left](0,-.3) to [out=right, in=left] (-0:.5);
\draw[fill=cyan] (0,0) circle [radius=.08];
}\ , 
\tikz[baseline=-.6ex]{
\draw [thin, dashed, fill=lightgray!50] (0,0) circle [radius=.5];
\draw[lightgray!50, double=black, double distance=0.4pt, ultra thick](180:.5) to [out=right, in=left](0,.3) to [out=right, in=left] (-0:.5);
\draw[lightgray!50, double=black, double distance=0.4pt, ultra thick] (-135:.5) -- (45:.5); 
\draw[lightgray!50, double=black, double distance=0.4pt, ultra thick] (-45:.5) -- (135:.5); 
\draw[fill=cyan] (0,0) circle [radius=.08];
}
\tikz[baseline=-.6ex]{
\draw[<->, xshift=1.5cm] (1,0)--(2,0);
}
\tikz[baseline=-.6ex]{
\draw[thin, dashed, fill=lightgray!50] (0,0) circle [radius=.5];
\draw[lightgray!50, double=black, double distance=0.4pt, ultra thick] (180:.5) to [out=right, in=left](0,-.3) to [out=right, in=left] (-0:.5);
\draw[lightgray!50, double=black, double distance=0.4pt, ultra thick] (-135:.5) -- (45:.5); 
\draw[lightgray!50, double=black, double distance=0.4pt, ultra thick] (-45:.5) -- (135:.5); 
\draw[fill=cyan] (0,0) circle [radius=.08];
},
\item[(RV)]
\tikz[baseline=-.6ex]{
\draw [thin, dashed, fill=lightgray!50] (0,0) circle [radius=.5];
\draw (0,0) -- (45:.5);
\draw (0,0) -- (-135:.5); 
\draw (0,0) -- (-45:.5);
\draw (0,0) -- (135:.5); 
\draw[fill=cyan] (0,0) circle [radius=.08];
}
\tikz[baseline=-.6ex]{
\draw[<->, xshift=1.5cm] (1,0)--(2,0);
}
\tikz[baseline=-.6ex]{
\draw [thin, dashed, fill=lightgray!50] (0,0) circle [radius=.5];
\draw[lightgray!50, double=black, double distance=0.4pt, ultra thick, rounded corners] 
(0,0) -- (45:.3) -- (135:.5);
\draw[lightgray!50, double=black, double distance=0.4pt, ultra thick, rounded corners] 
(0,0) -- (135:.3) -- (45:.5);
\draw[lightgray!50, double=black, double distance=0.4pt, ultra thick, rounded corners] 
(0,0) -- (-45:.3) -- (-135:.5);
\draw[lightgray!50, double=black, double distance=0.4pt, ultra thick, rounded corners] 
(0,0) -- (-135:.3) -- (-45:.5);
\draw[fill=cyan] (0,0) circle [radius=.08];
}
\ , 
\tikz[baseline=-.6ex]{
\draw [thin, dashed, fill=lightgray!50] (0,0) circle [radius=.5];
\draw (-135:.5) -- (45:.5); 
\draw (-45:.5) -- (135:.5); 
\draw[fill=cyan] (0,0) circle [radius=.08];
}
\tikz[baseline=-.6ex]{
\draw[<->, xshift=1.5cm] (1,0)--(2,0);
}
\tikz[baseline=-.6ex]{
\draw [thin, dashed, fill=lightgray!50] (0,0) circle [radius=.5];
\draw[lightgray!50, double=black, double distance=0.4pt, ultra thick, rounded corners] 
(0,0) -- (135:.3) -- (45:.5);
\draw[lightgray!50, double=black, double distance=0.4pt, ultra thick, rounded corners] 
(0,0) -- (45:.3) -- (135:.5);
\draw[lightgray!50, double=black, double distance=0.4pt, ultra thick, rounded corners] 
(0,0) -- (-135:.3) -- (-45:.5);
\draw[lightgray!50, double=black, double distance=0.4pt, ultra thick, rounded corners] 
(0,0) -- (-45:.3) -- (-135:.5);
\draw[fill=cyan] (0,0) circle [radius=.08];
}
.
\end{description}
\caption{The Reidemeister moves for $4$-valent graph diagrams}
\label{Reidemeister}
\end{figure}

\section{The $A_2$ bracket and some formulas}
We will construct invariants of oriented and unoriented $4$-valent rigid vertex graphs by using the linear skein theory corresponding to the quantum $A_2$ representation.
In this section, 
we introduce the $A_2$ web spaces, the $A_2$ bracket, and the $A_2$ clasps defined by Kuperberg~\cite{Kuperberg94, Kuperberg96}. 
Special skein elements called the $A_2$ clasps play an important role in construction of the colored $A_2$ polynomials for $4$-valent rigid vertex graphs.

Let $\varepsilon=(\varepsilon_1,\varepsilon_2,\dots,\varepsilon_m)$ be an $m$-tuple of signs ${+}$ or ${-}$. 
Let $D_\varepsilon$ denote the unit disk with signed marked points $\{\exp(2\pi\sqrt{-1}/m)^{j-1}\mid j=1,2,\dots,m \}$ on its boundary. 
The sign of $\exp(2\pi\sqrt{-1}/m)^{j-1}$ is given by $\varepsilon_j$ for $j=1,2,\dots,m$.
A {\em bipartite uni-trivalent graph} $G$ is a directed graph such that each vertex is either trivalent or univalent and the vertices are divided into the sinks and the sources. 
A sink (resp. source) is a vertex such that all edges adjoining to the vertex point into (resp. away from) it.
A {\em bipartite trivalent graph} $G$ in $D_{\varepsilon}$ is an embedding of a uni-trivalent graph into $D_\varepsilon$ such that any vertex $v$ has the following neighborhoods:
\[
\tikz[baseline=-.6ex]{
\draw [thin, dashed, fill=lightgray!50] (0,0) circle [radius=.5];
\draw[-<-=.5] (0:0) -- (90:.5);
\draw[-<-=.5] (0:0) -- (210:.5);
\draw[-<-=.5] (0:0) -- (-30:.5);
\node (v) at (0,0) [above right]{$v$};
\fill (0,0) circle [radius=1pt];
}
\text{\ or\ }
\tikz[baseline=-.6ex]{
\draw [thin, dashed] (0,0) circle [radius=.5];
\clip (0,0) circle [radius=.5];
\draw [thin, fill=lightgray!50] (0,-.5) rectangle (.5,.5);
\draw[-<-=.5] (0,0)--(.5,0);
\node (p) at (0,0) [left]{${+}$};
\node at (0,0) [above right]{$v$};
\draw [fill=black] (0,0) circle [radius=1pt];
}\text{\, if $v$ is a sink,}
\quad\tikz[baseline=-.6ex]{
\draw [thin, dashed, fill=lightgray!50] (0,0) circle [radius=.5];
\draw[->-=.5] (0:0) -- (90:.5);
\draw[->-=.5] (0:0) -- (210:.5);
\draw[->-=.5] (0:0) -- (-30:.5);
\node (v) at (0,0) [above right]{$v$};
\fill (0,0) circle [radius=1pt];
}
\text{\ or\ }
\tikz[baseline=-.6ex]{
\draw [thin, dashed] (0,0) circle [radius=.5];
\clip (0,0) circle [radius=.5];
\draw [thin, fill=lightgray!50] (0,-.5) rectangle (.5,.5);
\draw[->-=.5] (0,0)--(.5,0);
\node (p) at (0,0) [left]{${-}$};
\node at (0,0) [above right]{$v$};
\draw [fill=black] (0,0) circle [radius=1pt];
}\text{\, if $v$ is a source.}
\]
An {\em $A_2$ basis web} is the boundary-fixing isotopy class of a bipartite trivalent graph $G$ in $D_{\varepsilon}$, 
where any internal face of $D_{\varepsilon}\setminus G$ has at least six sides. 
Let us denote $B_\varepsilon$ the set of $A_2$ basis webs in $D_{\varepsilon}$.
For example,
$B_{(+,-,+,-,+,-)}$ has the following $A_2$ basis webs:
\[
\,\begin{tikzpicture}
\draw [thin, fill=lightgray!50] (0,0) circle [radius=.5];
\draw[-<-=.5] (0:.5) -- (180:.5);
\draw[->-=.5] (60:.5) to[out=-120, in=-60] (120:.5);
\draw[-<-=.5] (240:.5) to[out=60, in=120] (300:.5);
\foreach \i in {0,1,...,6} \draw[fill=black] ($(0,0) !1! \i*60:(.5,0)$) circle [radius=1pt];
\node at (0:.5) [right]{$\scriptstyle{+}$};
\node at (60:.5) [right]{$\scriptstyle{-}$};
\node at (120:.5) [left]{$\scriptstyle{+}$};
\node at (180:.5) [left]{$\scriptstyle{-}$};
\node at (240:.5) [left]{$\scriptstyle{+}$};
\node at (300:.5) [right]{$\scriptstyle{-}$};
\end{tikzpicture}\,,
\,\begin{tikzpicture}
\begin{scope}[rotate=60]
\draw [thin, fill=lightgray!50] (0,0) circle [radius=.5];
\draw[->-=.5] (0:.5) -- (180:.5);
\draw[-<-=.5] (60:.5) to[out=-120, in=-60] (120:.5);
\draw[->-=.5] (240:.5) to[out=60, in=120] (300:.5);
\foreach \i in {0,1,...,6} \draw[fill=black] ($(0,0) !1! \i*60:(.5,0)$) circle [radius=1pt];
\end{scope}
\node at (0:.5) [right]{$\scriptstyle{+}$};
\node at (60:.5) [right]{$\scriptstyle{-}$};
\node at (120:.5) [left]{$\scriptstyle{+}$};
\node at (180:.5) [left]{$\scriptstyle{-}$};
\node at (240:.5) [left]{$\scriptstyle{+}$};
\node at (300:.5) [right]{$\scriptstyle{-}$};
\end{tikzpicture}\,,
\,\begin{tikzpicture}
\begin{scope}[rotate=-60]
\draw [thin, fill=lightgray!50] (0,0) circle [radius=.5];
\draw[->-=.5] (0:.5) -- (180:.5);
\draw[-<-=.5] (60:.5) to[out=-120, in=-60] (120:.5);
\draw[->-=.5] (240:.5) to[out=60, in=120] (300:.5);
\foreach \i in {0,1,...,6} \draw[fill=black] ($(0,0) !1! \i*60:(.5,0)$) circle [radius=1pt];
\end{scope}
\node at (0:.5) [right]{$\scriptstyle{+}$};
\node at (60:.5) [right]{$\scriptstyle{-}$};
\node at (120:.5) [left]{$\scriptstyle{+}$};
\node at (180:.5) [left]{$\scriptstyle{-}$};
\node at (240:.5) [left]{$\scriptstyle{+}$};
\node at (300:.5) [right]{$\scriptstyle{-}$};
\end{tikzpicture}\,,
\,\begin{tikzpicture}
\draw [thin, fill=lightgray!50] (0,0) circle [radius=.5];
\draw[-<-=.5] (0:.5) to[out=180, in=-120] (60:.5);
\draw[rotate=120, -<-=.5] (0:.5) to[out=180, in=-120] (60:.5);
\draw[rotate=240, -<-=.5] (0:.5) to[out=180, in=-120] (60:.5);
\foreach \i in {0,1,...,6} \draw[fill=black] ($(0,0) !1! \i*60:(.5,0)$) circle [radius=1pt];
\node at (0:.5) [right]{$\scriptstyle{+}$};
\node at (60:.5) [right]{$\scriptstyle{-}$};
\node at (120:.5) [left]{$\scriptstyle{+}$};
\node at (180:.5) [left]{$\scriptstyle{-}$};
\node at (240:.5) [left]{$\scriptstyle{+}$};
\node at (300:.5) [right]{$\scriptstyle{-}$};
\end{tikzpicture}\,,
\,\begin{tikzpicture}
\begin{scope}[rotate=60]
\draw [thin, fill=lightgray!50] (0,0) circle [radius=.5];
\draw[->-=.5] (0:.5) to[out=180, in=-120] (60:.5);
\draw[rotate=120, ->-=.5] (0:.5) to[out=180, in=-120] (60:.5);
\draw[rotate=240, ->-=.5] (0:.5) to[out=180, in=-120] (60:.5);
\foreach \i in {0,1,...,6} \draw[fill=black] ($(0,0) !1! \i*60:(.5,0)$) circle [radius=1pt];
\end{scope}
\node at (0:.5) [right]{$\scriptstyle{+}$};
\node at (60:.5) [right]{$\scriptstyle{-}$};
\node at (120:.5) [left]{$\scriptstyle{+}$};
\node at (180:.5) [left]{$\scriptstyle{-}$};
\node at (240:.5) [left]{$\scriptstyle{+}$};
\node at (300:.5) [right]{$\scriptstyle{-}$};
\end{tikzpicture}\,,
\,\begin{tikzpicture}
\draw [thin, fill=lightgray!50] (0,0) circle [radius=.5];
\draw[-<-=.5] (0:.5) -- (0:.3);
\draw[rotate=60, ->-=.5] (0:.5) -- (0:.3);
\draw[rotate=120, -<-=.5] (0:.5) -- (0:.3);
\draw[rotate=180, ->-=.5] (0:.5) -- (0:.3);
\draw[rotate=240, -<-=.5] (0:.5) -- (0:.3);
\draw[rotate=300, ->-=.5] (0:.5) -- (0:.3);
\draw[->-=.5] (0:.3) -- (60:.3);
\draw[rotate=60, -<-=.5] (0:.3) -- (60:.3);
\draw[rotate=120, ->-=.5] (0:.3) -- (60:.3);
\draw[rotate=180, -<-=.5] (0:.3) -- (60:.3);
\draw[rotate=240, ->-=.5] (0:.3) -- (60:.3);
\draw[rotate=300, -<-=.5] (0:.3) -- (60:.3);
\foreach \i in {0,1,...,6} \draw[fill=black] ($(0,0) !1! \i*60:(.5,0)$) circle [radius=1pt];
\node at (0:.5) [right]{$\scriptstyle{+}$};
\node at (60:.5) [right]{$\scriptstyle{-}$};
\node at (120:.5) [left]{$\scriptstyle{+}$};
\node at (180:.5) [left]{$\scriptstyle{-}$};
\node at (240:.5) [left]{$\scriptstyle{+}$};
\node at (300:.5) [right]{$\scriptstyle{-}$};
\end{tikzpicture}\,.
\]
The {\em $A_2$ web space $W_\varepsilon$} is the $\mathbb{Q}(q^{\frac{1}{6}})$-vector space spanned by $B_\varepsilon$. 
A {\em tangled trivalent graph diagram} in $D_\varepsilon$ is an immersed bipartite uni-trivalent graph in $D_\varepsilon$ whose intersection points are only transverse double points of edges with crossing data 
\,\tikz[baseline=-.6ex, scale=.8]{
\draw [thin, dashed, fill=lightgray!50] (0,0) circle [radius=.5];
\draw[->-=.8] (-45:.5) -- (135:.5);
\draw[->-=.8, lightgray!50, double=black, double distance=0.4pt, ultra thick] (-135:.5) -- (45:.5);
}\, or 
\,\tikz[baseline=-.6ex, scale=.8]{
\draw [thin, dashed, fill=lightgray!50] (0,0) circle [radius=.5];
\draw[->-=.8] (-135:.5) -- (45:.5);
\draw[->-=.8, lightgray!50, double=black, double distance=0.4pt, ultra thick] (-45:.5) -- (135:.5);
}\, .
Tangled trivalent graph diagrams $G$ and $G'$ are regularly isotopic if $G$ is obtained from $G'$ by a finite sequence of boundary-fixing isotopies and Reidemeister moves, see Figure~\ref{Reidemeister}, with some direction of edges.
\begin{figure}
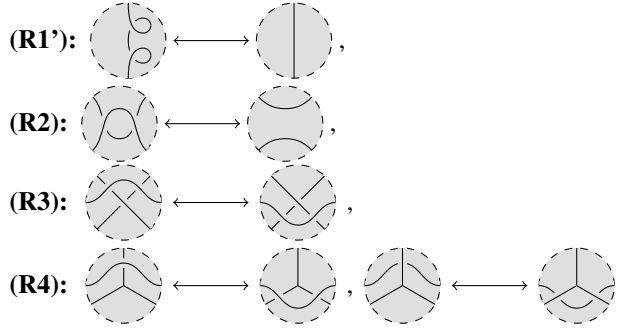

\begin{description}
\item[(R1')]
\tikz[baseline=-.6ex]{
\draw [thin, dashed, fill=lightgray!50] (0,0) circle [radius=.5];
\draw (.3,-.2)
to[out=south, in=east] (.2,-.3)
to[out=west, in=south] (.0,.0)
to[out=north, in=west] (.2,.3)
to[out=east, in=north] (.3,.2);
\draw[lightgray!50, double=black, double distance=0.4pt, ultra thick] (0,-.5) 
to[out=north, in=west] (.2,-.1)
to[out=east, in=north] (.3,-.2);
\draw[lightgray!50, double=black, double distance=0.4pt, ultra thick] (.3,.2)
to[out=south, in=east] (.2,.1)
to[out=west, in=south] (0,.5);
}
\tikz[baseline=-.6ex]{
\draw [<->, xshift=1.5cm] (1,0)--(2,0);
}
\tikz[baseline=-.6ex]{
\draw[xshift=3cm, thin, dashed, fill=lightgray!50] (0,0) circle [radius=.5];
\draw[xshift=3cm] (90:.5) to (-90:.5);
}\ ,
\item[(R2)]
\tikz[baseline=-.6ex]{
\draw [thin, dashed, fill=lightgray!50] (0,0) circle [radius=.5];
\draw (135:.5) to [out=south east, in=west](0,-.2) to [out=east, in=south west](45:.5);
\draw [lightgray!50, double=black, double distance=0.4pt, ultra thick](-135:.5) to [out=north east, in=left](0,.2) to [out=right, in=north west] (-45:.5);
}
\tikz[baseline=-.6ex]{
\draw [<->, xshift=1.5cm] (1,0)--(2,0);
}
\tikz[baseline=-.6ex]{
\draw[xshift=3cm, thin, dashed, fill=lightgray!50] (0,0) circle [radius=.5];
\draw[xshift=3cm] (135:.5) to [out=south east, in=west](0,.2) to [out=east, in=south west](45:.5);
\draw[xshift=3cm] (-135:.5) to [out=north east, in=left](0,-.2) to [out=right, in=north west] (-45:.5);
}\ ,
\item[(R3)]
\tikz[baseline=-.6ex]{
\draw [thin, dashed, fill=lightgray!50] (0,0) circle [radius=.5];
\draw (-135:.5) -- (45:.5);
\draw [lightgray!50, double=black, double distance=0.4pt, ultra thick] (135:.5) -- (-45:.5);
\draw[lightgray!50, double=black, double distance=0.4pt, ultra thick](180:.5) to [out=right, in=left](0,.3) to [out=right, in=left] (-0:.5);
}
\tikz[baseline=-.6ex]{
\draw[<->, xshift=1.5cm] (1,0)--(2,0);
}
\tikz[baseline=-.6ex]{
\draw [xshift=3cm, thin, dashed, fill=lightgray!50] (0,0) circle [radius=.5];
\draw [xshift=3cm] (-135:.5) -- (45:.5);
\draw [xshift=3cm, lightgray!50, double=black, double distance=0.4pt, ultra thick] (135:.5) -- (-45:.5);
\draw[xshift=3cm, lightgray!50, double=black, double distance=0.4pt, ultra thick](180:.5) to [out=right, in=left](0,-.3) to [out=right, in=left] (-0:.5);
}\ ,
\item[(R4)]
\tikz[baseline=-.6ex]{
\draw [thin, dashed, fill=lightgray!50] (0,0) circle [radius=.5];
\draw (0:0) -- (90:.5); 
\draw (0:0) -- (210:.5); 
\draw (0:0) -- (-30:.5);
\draw[lightgray!50, double=black, double distance=0.4pt, ultra thick](180:.5) to [out=right, in=left](0,.3) to [out=right, in=left] (-0:.5);
}
\tikz[baseline=-.6ex]{
\draw[<->, xshift=1.5cm] (1,0)--(2,0);
}
\tikz[baseline=-.6ex]{
\draw [thin, dashed, fill=lightgray!50] (0,0) circle [radius=.5];
\draw (0:0) -- (90:.5); 
\draw (0:0) -- (210:.5); 
\draw (0:0) -- (-30:.5);
\draw[lightgray!50, double=black, double distance=0.4pt, ultra thick](180:.5) to [out=right, in=left](0,-.3) to [out=right, in=left] (-0:.5);
}\ , 
\tikz[baseline=-.6ex]{
\draw [thin, dashed, fill=lightgray!50] (0,0) circle [radius=.5];
\draw[lightgray!50, double=black, double distance=0.4pt, ultra thick](180:.5) to [out=right, in=left](0,.3) to [out=right, in=left] (-0:.5);
\draw[lightgray!50, double=black, double distance=0.4pt, ultra thick] (0:0) -- (90:.5); 
\draw (0:0) -- (210:.5); 
\draw (0:0) -- (-30:.5);
}
\tikz[baseline=-.6ex]{
\draw[<->, xshift=1.5cm] (1,0)--(2,0);
}
\tikz[baseline=-.6ex]{
\draw [thin, dashed, fill=lightgray!50] (0,0) circle [radius=.5];
\draw[lightgray!50, double=black, double distance=0.4pt, ultra thick](180:.5) to [out=right, in=left](0,-.3) to [out=right, in=left] (-0:.5);
\draw (0:0) -- (90:.5); 
\draw[lightgray!50, double=black, double distance=0.4pt, ultra thick] (0:0) -- (210:.5); 
\draw[lightgray!50, double=black, double distance=0.4pt, ultra thick] (0:0) -- (-30:.5);
}.
\end{description}
\caption{The Reidemeister moves for tangled trivalent graph diagrams}
\label{Reidemeister}
\end{figure}

{\em Tangled trivalent graphs} in $D_\varepsilon$ are regular isotopy classes of tangled trivalent graph diagrams in $D_\varepsilon$. 
We denote $T_\varepsilon$ the set of tangled trivalent graphs in $D_\varepsilon$.

\begin{DEF}[The $A_2$ bracket~\cite{Kuperberg96}]\label{A2def}
We define a $\mathbb{Q}(q^{\frac{1}{6}})$-linear map $\langle\,\cdot\,\rangle_3\colon\mathbb{Q}(q^{\frac{1}{6}})T_\varepsilon\to W_\varepsilon$ by the following.
\begin{itemize}
\item 
$\Big\langle\,\tikz[baseline=-.6ex, scale=0.8]{
\draw [thin, dashed, fill=lightgray!50] (0,0) circle [radius=.5];
\draw[->-=.8] (-45:.5) -- (135:.5);
\draw[->-=.8, lightgray!50, double=black, double distance=0.4pt, ultra thick] (-135:.5) -- (45:.5);
}\,\Big\rangle_{\! 3}
=
q^{\frac{1}{3}}\Big\langle\,\tikz[baseline=-.6ex, scale=0.8]{
\draw[thin, dashed, fill=lightgray!50] (0,0) circle [radius=.5];
\draw[->-=.5] (-45:.5) to [out=north west, in=south](.2,0) to [out=north, in=south west](45:.5);
\draw[->-=.5] (-135:.5) to [out=north east, in=south](-.2,0) to [out=north, in=south east] (135:.5);
}\,\Big\rangle_{\! 3}
-
q^{-\frac{1}{6}}\Big\langle\,\tikz[baseline=-.6ex, scale=0.8]{
\draw[thin, dashed, fill=lightgray!50] (0,0) circle [radius=.5];
\draw[->-=.5] (-45:.5) -- (0,-.2);
\draw[->-=.5] (-135:.5) -- (0,-.2);
\draw[-<-=.5] (0,-.2) -- (0,.2);
\draw[-<-=.5] (45:.5) -- (0,.2);
\draw[-<-=.5] (135:.5) -- (0,.2);
}\,\Big\rangle_{\! 3}
$,
\item 
$\Big\langle\,\tikz[baseline=-.6ex, scale=0.8]{
\draw [thin, dashed, fill=lightgray!50] (0,0) circle [radius=.5];
\draw[->-=.8] (-135:.5) -- (45:.5);
\draw[->-=.8, lightgray!50, double=black, double distance=0.4pt, ultra thick] (-45:.5) -- (135:.5);
}\,\Big\rangle_{\! 3}
=
q^{-\frac{1}{3}}\Big\langle\,\tikz[baseline=-.6ex, scale=0.8]{
\draw[thin, dashed, fill=lightgray!50] (0,0) circle [radius=.5];
\draw[->-=.5] (-45:.5) to [out=north west, in=south](.2,0) to [out=north, in=south west](45:.5);
\draw[->-=.5] (-135:.5) to [out=north east, in=south](-.2,0) to [out=north, in=south east] (135:.5);
}\,\Big\rangle_{\! 3}
-
q^{\frac{1}{6}}\Big\langle\,\tikz[baseline=-.6ex, scale=0.8]{
\draw[thin, dashed, fill=lightgray!50] (0,0) circle [radius=.5];
\draw[->-=.5] (-45:.5) -- (0,-.2);
\draw[->-=.5] (-135:.5) -- (0,-.2);
\draw[-<-=.5] (0,-.2) -- (0,.2);
\draw[-<-=.5] (45:.5) -- (0,.2);
\draw[-<-=.5] (135:.5) -- (0,.2);
}\,\Big\rangle_{\! 3}
$,
\item 
$\Big\langle\,\tikz[baseline=-.6ex, rotate=90, scale=0.8]{
\draw[thin, dashed, fill=lightgray!50] (0,0) circle [radius=.5];
\draw[->-=.6] (-45:.5) -- (-45:.3);
\draw[->-=.6] (-135:.5) -- (-135:.3);
\draw[-<-=.6] (45:.5) -- (45:.3);
\draw[->-=.6] (135:.5) -- (135:.3);
\draw[->-=.5] (45:.3) -- (135:.3);
\draw[-<-=.5] (-45:.3) -- (-135:.3);
\draw[->-=.5] (45:.3) -- (-45:.3);
\draw[-<-=.5] (135:.3) -- (-135:.3);
}\,\Big\rangle_{\! 3}
=
\Big\langle\,\tikz[baseline=-.6ex, rotate=90, scale=0.8]{
\draw[thin, dashed, fill=lightgray!50] (0,0) circle [radius=.5];
\draw[->-=.5] (-45:.5) to [out=north west, in=south](.2,0) to [out=north, in=south west](45:.5);
\draw[-<-=.5] (-135:.5) to [out=north east, in=south](-.2,0) to [out=north, in=south east] (135:.5);
}\,\Big\rangle_{\! 3}
+
\Big\langle\,\tikz[rotate=90, baseline=-.6ex, rotate=90, scale=0.8]{
\draw[thin, dashed, fill=lightgray!50] (0,0) circle [radius=.5];
\draw[-<-=.5] (-45:.5) to [out=north west, in=south](.2,0) to [out=north, in=south west](45:.5);
\draw[->-=.5] (-135:.5) to [out=north east, in=south](-.2,0) to [out=north, in=south east] (135:.5);
}\,\Big\rangle_{\! 3}
$,
\item 
$\Big\langle\,\tikz[baseline=-.6ex, rotate=-90, scale=0.8]{
\draw[thin, dashed, fill=lightgray!50] (0,0) circle [radius=.5];
\draw[->-=.5] (0,-.5) -- (0,-.25);
\draw[->-=.5] (0,.25) -- (0,.5);
\draw[-<-=.5] (0,-.25) to [out=60, in=120, relative](0,.25);
\draw[-<-=.5] (0,-.25) to [out=-60, in=-120, relative](0,.25);
}\,\Big\rangle_{\! 3}
=
\left[2\right]\Big\langle\,\tikz[baseline=-.6ex, rotate=-90, scale=0.8]{
\draw[thin, dashed, fill=lightgray!50] (0,0) circle [radius=.5];
\draw[->-=.5] (0,-.5) -- (0,.5);
}\,\Big\rangle_{\! 3}
$,
\item 
$
\Big\langle G\sqcup
\,\tikz[baseline=-.6ex, scale=0.8]{
\draw[thin, dashed, fill=lightgray!50] (0,0) circle [radius=.5];
\draw[->-=.5] (0,0) circle [radius=.3];
}\,\Big\rangle_{\! 3}
=
\left[3\right] \langle G\rangle_{3},
$
\end{itemize}
where $\left[n\right]=\frac{q^{\frac{n}{2}}-q^{-\frac{n}{2}}}{q^{\frac{1}{2}}-q^{-\frac{1}{2}}}$ is a quantum integer.
\end{DEF}
We remark that this map is invariant under the Reidemeister moves for tangled trivalent graphs.

We next consider the $A_2$ web space $W_{n^{+}+n^{-}}=W_{({+},{+},\dots,{+},{-},{-},\dots,{-})}$. 
The $n$ marked points with sign ${+}$ lie in the right side and the $n$ marked points with sign ${-}$ in left side.
We define $A_2$ clasps 
$\tikz[baseline=-.6ex]{
\draw[->-=.8] (-.5,0) -- (.5,0);
\draw[fill=white] (-.1,-.3) rectangle (.1,.3);
\node at (.1,0) [above right]{${\scriptstyle n}$};
}\,\in W_{n^{+}+n^{-}}
$ 
inductively by the following. 
\begin{DEF}(The $A_2$ clasps)
\begin{align}
\tikz[baseline=-.6ex]{
\draw[->-=.8] (-.5,0) -- (.5,0);
\draw[fill=white] (-.1,-.3) rectangle (.1,.3);
\node at (.1,0) [above right]{${\scriptstyle 1}$};
}\,
&= 
\,\tikz[baseline=-.6ex]{
\draw[->-=.5] (-.5,0) -- (.5,0) node at (0,0) [above]{${\scriptstyle 1}$};
}\,\in W_{1^{+}+1^{-}}\notag\\
\tikz[baseline=-.6ex]{
\draw[->-=.8] (-.5,0) -- (.5,0);
\draw[fill=white] (-.1,-.3) rectangle (.1,.3);
\node at (.1,0) [above right]{${\scriptstyle n}$};
}\,
&=
\bigg\langle\,\tikz[baseline=-.6ex]{
\draw[->-=.8] (-.5,.1) -- (.5,.1);
\draw[->-=.5] (-.5,-.4) -- (.5,-.4);
\draw[fill=white] (-.1,-.2) rectangle (.1,.4);
\node at (0,0) [above right]{${\scriptstyle n-1}$};
\node at (.2,-.4) [above right]{${\scriptstyle 1}$};
}\,\bigg\rangle_{\! 3}
-\frac{\left[n-1\right]}{\left[n\right]}
\bigg\langle\,\tikz[baseline=-.6ex]{
\draw[->-=.5] (-.9,.1) -- (-.4,.1);
\draw[->-=.5] (-.3,.2) -- (.3,.2);
\draw[->-=.5] (.4,.1) -- (.9,.1);
\draw[->-=.5] (-.9,-.4) 
to[out=east, in=west] (-.3,-.4) 
to[out=east, in=south] (-.1,-.2);
\draw[-<-=.8] (-.1,-.2) 
to[out=north, in=east] (-.3,0);
\draw[-<-=.5] (.9,-.4) 
to[out=west, in=east] (.3,-.4)
to[out=west, in=south] (.1,-.2);
\draw[->-=.8] (.1,-.2)
to[out=north, in=west] (.3,0);
\draw[fill=white] (-.4,-.2) rectangle (-.3,.4);
\draw[fill=white] (.3,-.2) rectangle (.4,.4);
\draw[-<-=.5] (-.1,-.2) -- (.1,-.2);
\node at (-.3,0) [above left]{${\scriptscriptstyle n-1}$};
\node at (.3,0) [above right]{${\scriptscriptstyle n-1}$};
\node at (0,.1) [above]{${\scriptscriptstyle n-2}$};
\node at (-.6,-.3) {${\scriptscriptstyle 1}$};
\node at (.6,-.3) {${\scriptscriptstyle 1}$};
\node at (0,-.2) [below]{${\scriptscriptstyle 1}$};
\node at (-.3,0) [right]{${\scriptscriptstyle 1}$};
\node at (.3,0) [left]{${\scriptscriptstyle 1}$};
}\,\bigg\rangle_{\! 3} \in W_{n^{+}+n^{-}}
\end{align}
\end{DEF}
A strand decorated by a non-negative integer $n$ means $n$ parallelization of the strand. 
For example, 
$
\,\tikz[baseline=-.6ex]{
\draw [thin, dashed, fill=lightgray!50] (0,0) circle [radius=.5];
\draw (-.5,0)--(.5,0) node at (0,0) [above]{$n$};
}\,
=
\,\tikz[baseline=-.6ex]{
\draw [thin, dashed, fill=lightgray!50] (0,0) circle [radius=.5];
\draw (150:.5) -- (30:.5) (160:.5) -- (20:.5);
\node[rotate=90] at (0,0) {${\scriptstyle \cdots}$};
\draw (210:.5) -- (-30:.5);
\node at (.7,.0) {$\Big\}n$};
}\,$
,
$\,\tikz[baseline=-.6ex]{
\draw [thin, dashed] (0,0) circle [radius=.5];
\clip (0,0) circle [radius=.5];
\draw [thin, fill=lightgray!50] (0,-.5) rectangle (.5,.5);
\draw (0,0)--(.5,0);
\node at (.2,0) [above]{$n$};
\draw [fill=black] (0,0) circle [radius=1pt];
}\,
=
\,\tikz[baseline=-.6ex]{
\begin{scope}
\draw [thin, dashed] (0,0) circle [radius=.5];
\clip (0,0) circle [radius=.5];
\draw [thin, fill=lightgray!50] (0,-.5) rectangle (.5,.5);
\draw (0,.2) -- (.5,.2) (0,.1) -- (.5,.1) (0,-.2) -- (.5,-.2);
\end{scope}
\draw [fill=black] (0,.2) circle [radius=1pt] 
(0,.1) circle [radius=1pt] 
(0,-.2) circle [radius=1pt];
\node at (.1,0) [left]{$n\Big\{$};
\node[rotate=90] at (.2,-.05) {${\scriptstyle \cdots}$};
}\,$
, and
$\,\tikz[baseline=-.6ex]{
\draw [thin, dashed, fill=lightgray!50] (0,0) circle [radius=.5];
\draw (135:.5) -- (-45:.5);
\draw [lightgray!50, double=black, double distance=0.4pt, ultra thick] (-135:.5) -- (45:.5);
\node at (45:.5) [right]{$k$} node at (135:.5) [left]{$l$};
}\,
=
\,\tikz[baseline=-.6ex]{
\draw [thin, dashed, fill=lightgray!50] (0,0) circle [radius=.5];
\draw (165:.5) -- (-75:.5) (150:.5) -- (-60:.5) (120:.5) -- (-30:.5);
\node[rotate=45] at (-.3,.3) {${\scriptstyle{\cdots}}$};
\node[rotate=45] at (.3,-.3) {${\scriptstyle{\cdots}}$};
\draw [lightgray!50, double=black, double distance=0.4pt, thick] (-150:.5) -- (60:.5) (-120:.5) -- (30:.5) (-105:.5) -- (15:.5);
\node[rotate=-45] at (.3,.3) {${\scriptstyle{\cdots}}$};
\node[rotate=-45] at (-.3,-.3) {${\scriptstyle{\cdots}}$};
\node at (0,0) {${\scriptstyle{\cdots}}$};
\node at (45:.5) [above right]{$k$} node at (135:.5) [above left]{$l$};
\node[rotate=-135] at (40:.6) {$\Big\{$};
\node[rotate=-45] at (140:.6) {$\Big\{$};
}\, $.

$A_2$ clasps have the following properties.
\begin{LEM}[Properties of $A_2$ clasps]\label{A2claspprop}
For any positive integer $n$,
\begin{itemize} 
\item $\Big\langle\,\tikz[baseline=-.6ex]{
\draw[->-=.5] (-.6,0) -- (.6,0);
\draw[fill=white] (-.4,-.3) rectangle (-.2,.3);
\draw[fill=white] (.2,-.3) rectangle (.4,.3);
\node at (-.3,0) [above right]{${\scriptstyle n}$};
\node at (.3,0) [above right]{${\scriptstyle n}$};
}\,\Big\rangle_{\! 3}
=
\,\tikz[baseline=-.6ex]{
\draw[->-=.8] (-.5,0) -- (.5,0);
\draw[fill=white] (-.1,-.3) rectangle (.1,.3);
\node at (0,0) [above right] {${\scriptstyle n}$};
}\,
$,
\item $\Big\langle\,\tikz[baseline=-.6ex]{
\draw[->-=.5] (-.5,0) -- (-.1,0);
\draw[->-=.5] (0,.2) -- (.5,.2);
\draw[->-=.5] (0,-.2) -- (.5,-.2);
\draw[->-=.5] (0,.1) 
to[out=east, in=north] (.3,0);
\draw[->-=.5] (0,-.1)
to[out=east, in=south] (.3,0);
\draw[-<-=.5] (.3,0) -- (.5,0);
\draw[fill=white] (-.2,-.3) rectangle (0,.3);
\node at (.4,0) [right] {${\scriptstyle 1}$};
\node at (.4,.2) [right] {${\scriptstyle n-k-2}$};
\node at (.4,-.2) [right] {${\scriptstyle k}$};
}\,\Big\rangle_{\! 3}=0$\quad ($k=0,1,\dots,n-2$).
\end{itemize}
\end{LEM}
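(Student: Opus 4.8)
The plan is to prove the two properties \emph{simultaneously} by induction on $n$, using the defining recursion of the $A_2$ clasp as the engine of the induction. Write $\pi_n$ for the $n$-clasp and $\pi_{n-1}\otimes 1$ for the element obtained by placing $\pi_{n-1}$ on the top $n-1$ strands while the bottom strand (the one split off in the recursion) passes straight through; the correction term in the definition is then $(\pi_{n-1}\otimes 1)$ joined to $(\pi_{n-1}\otimes 1)$ through a single trivalent cap--cup. The base case $n=1$ is immediate: the first identity is the tautology that two single strands in a row form one strand, and the second identity is vacuous since its range of $k$ is empty. Assuming both properties for all colors smaller than $n$, I would first record the \emph{absorption} identity $\pi_n\circ(\pi_{n-1}\otimes 1)=\pi_n$ (where $\circ$ denotes stacking), together with its mirror. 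This follows by expanding the leftmost $\pi_n$ through its recursion and applying the idempotency of $\pi_{n-1}$ (the induction hypothesis for the first property): the leading term collapses to $\pi_{n-1}\otimes 1$, each inner $\pi_{n-1}$ of the correction term is left unchanged, and the two pieces reassemble into $\pi_n$.

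Next I would establish the second property (the killing property) for $\pi_n$. Attaching the trivalent vertex that merges two adjacent through-strands and expanding $\pi_n$ by the recursion, I would split into cases according to whether the merged pair meets the split-off bottom strand. When the merged pair lies entirely inside the $\pi_{n-1}$ block (the cases $k\ge 1$), both the leading term and the correction term vanish directly by the induction hypothesis for the second property applied to $\pi_{n-1}$. The remaining case, where the trivalent vertex involves the split-off strand (essentially $k=0$), is the heart of the argument: here I would reduce the leading contribution and the correction contribution to scalar multiples of a single lower clasp, using the two induction hypotheses for $\pi_{n-1}$, the bigon evaluation $[2]$ from Definition~\ref{A2def}, and a partial-trace (bubble) evaluation of $\pi_{n-1}$. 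The coefficient $\tfrac{[n-1]}{[n]}$ appearing in the recursion is exactly the one for which these two contributions cancel.

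The first property (idempotency) for $\pi_n$ then follows quickly. Expanding one factor of $\pi_n\circ\pi_n$ by the recursion, the leading term is $\pi_n\circ(\pi_{n-1}\otimes 1)=\pi_n$ by the absorption identity, while the correction term presents, after the same absorption, a trivalent turnback feeding directly into $\pi_n$; this term vanishes by the killing property just established. Hence $\pi_n\circ\pi_n=\pi_n$, completing the inductive step.

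I expect the main obstacle to be the essential case of the killing property. The difficulty is organizing the local web reductions---the bigon and bubble evaluations together with the idempotency and killing hypotheses for $\pi_{n-1}$---so that the leading and correction terms collapse to comparable scalar multiples of the \emph{same} lower clasp, and then checking that the quantum-integer bookkeeping yields precisely the cancellation forced by the coefficient $\tfrac{[n-1]}{[n]}$. Keeping careful track of the edge orientations at the trivalent vertex and of which strands it attaches to is the delicate part of this step.
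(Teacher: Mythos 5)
The paper never actually proves Lemma~\ref{A2claspprop}: it is stated as a known property of the $A_2$ clasps and deferred to the literature (\cite{Kuperberg96, OhtsukiYamada97}), so there is no in-paper argument to compare against; your proposal supplies the standard proof from that literature, and its skeleton is correct. The base case, the absorption identity $\pi_n\circ(\pi_{n-1}\otimes 1)=\pi_n$ (every term of the recursion for $\pi_n$ ends in $\pi_{n-1}\otimes 1$, so idempotency of $\pi_{n-1}$ suffices), the case split in the killing property, and the final deduction of idempotency of $\pi_n$ from absorption plus the killing property just established are all in the right logical order. Two details deserve flagging before the plan closes. First, in the essential case $k=0$ the leading and correction contributions are not multiples of a bare lower clasp but of the same web, namely $(\pi_{n-1}\otimes 1)$ with the sink vertex attached to its lowest two legs; the correction reduces to $\bigl([2]-\tfrac{[n-2]}{[n-1]}\bigr)$ times this web, so the total coefficient is $1-\tfrac{[n-1]}{[n]}\bigl([2]-\tfrac{[n-2]}{[n-1]}\bigr)=1-\tfrac{[2][n-1]-[n-2]}{[n]}=0$ by the identity $[2][n-1]=[n]+[n-2]$. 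Second, for $n\ge 3$ this reduction needs more than the bigon evaluation you list: expanding the inner $\pi_{n-1}$ by its recursion produces a square face, and one must invoke the square relation of Definition~\ref{A2def}; of its two resolutions, one attaches a sink to two adjacent outputs of the outer $\pi_{n-1}$ and is killed by the induction hypothesis, while the other produces exactly the $-\tfrac{[n-2]}{[n-1]}$ contribution above (for $n=2$ the inner clasp is a single strand, only the bigon occurs, and the cancellation is immediate). With those two points supplied, your induction is a complete and correct proof of the lemma the paper takes for granted.
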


We also define the $A_2$ clasp of type $(n,m)$ according to Ohtsuki and Yamada~\cite{OhtsukiYamada97}.
\begin{DEF}[the $A_2$ clasp of type $(n,m)$]\label{doubleA2clasp}
\[
\Bigg\langle\,\tikz[baseline=-.6ex, scale=.8]{
\draw[-<-=.8] (-.6,.4) -- (.6,.4);
\draw[->-=.8] (-.6,-.4) -- (.6,-.4);
\draw[fill=white] (-.1,-.6) rectangle (.1,.6);
\draw (-.1,.0) -- (.1,.0);
\node at (.4,-.6)[right]{$\scriptstyle{m}$};
\node at (-.4,-.6)[left]{$\scriptstyle{m}$};
\node at (.4,.6)[right]{$\scriptstyle{n}$};
\node at (-.4,.6)[left]{$\scriptstyle{n}$};
}\,\Bigg\rangle_{\! 3}
=
\sum_{k=0}^{\min\{m,n\}}
(-1)^k
\frac{{n\brack k}{m\brack k}}{{n+m+1\brack k}}
\Bigg\langle\,\tikz[baseline=-.6ex]{
\draw
(-.4,.4) -- +(-.2,0)
(.4,-.4) -- +(.2,0)
(-.4,-.4) -- +(-.2,0)
(.4,.4) -- +(.2,0);
\draw[-<-=.5] (-.4,.5) -- (.4,.5);
\draw[->-=.5] (-.4,-.5) -- (.4,-.5);
\draw[-<-=.5] (-.4,.3) to[out=east, in=east] (-.4,-.3);
\draw[->-=.5] (.4,.3) to[out=west, in=west] (.4,-.3);
\draw[fill=white] (.4,-.6) rectangle +(.1,.4);
\draw[fill=white] (-.4,-.6) rectangle +(-.1,.4);
\draw[fill=white] (.4,.6) rectangle +(.1,-.4);
\draw[fill=white] (-.4,.6) rectangle +(-.1,-.4);
\node at (.4,-.6)[right]{$\scriptstyle{m}$};
\node at (-.4,-.6)[left]{$\scriptstyle{m}$};
\node at (.4,.6)[right]{$\scriptstyle{n}$};
\node at (-.4,.6)[left]{$\scriptstyle{n}$};
\node at (0,.5)[above]{$\scriptstyle{n-k}$};
\node at (0,-.5)[below]{$\scriptstyle{m-k}$};
\node at (-.2,0)[left]{$\scriptstyle{k}$};
\node at (.2,0)[right]{$\scriptstyle{k}$};
}\,\Bigg\rangle_{\! 3}
\]
where 
\[
 {n \brack k}=\frac{\left[n\right]!}{\left[k\right]!\left[n-k\right]!}=\frac{\{n\}!}{\{k\}!\{n-k\}!}
\]
for $k\leq n$.
\end{DEF}

\begin{LEM}[Property of $A_2$ clasps of type $(m,n)$]\label{doubleA2claspprop}
\[
\Big\langle\,\tikz[baseline=-.6ex]{
\draw[-<-=.5] (-.5,.2) -- (-.1,.2);
\draw[->-=.5] (-.5,-.2) -- (-.1,-.2);
\draw[-<-=.5] (0,.2) -- (.5,.2);
\draw[->-=.5] (0,-.2) -- (.5,-.2);
\draw[-<=.5] (0,.1) 
to[out=east, in=north] (.3,0);
\draw (0,-.1)
to[out=east, in=south] (.3,0);
\draw[fill=white] (-.2,-.3) rectangle (0,.3);
\draw (-.2,.0) -- (.0,.0);
\node at (.4,0) [right] {${\scriptstyle 1}$};
\node at (.4,.2) [right] {${\scriptstyle n-1}$};
\node at (.4,-.2) [right] {${\scriptstyle m-1}$};
}\,\Big\rangle_{\! 3}=0 .
\]
\end{LEM}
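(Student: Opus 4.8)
The plan is to prove the identity by substituting the defining expansion of the $A_2$ clasp of type $(n,m)$ from Definition~\ref{doubleA2clasp} and then reducing every resulting web to the single-clasp relations of Lemma~\ref{A2claspprop}. Write the clasp of type $(n,m)$ as $\sum_{k=0}^{\min\{n,m\}} c_k\,D_k$ with $c_k=(-1)^k {n \brack k}{m \brack k}/{n+m+1 \brack k}$, where $D_k$ is the web on the right-hand side of Definition~\ref{doubleA2clasp}: four single clasps of colors $n,m,n,m$ at the corners, joined by an $(n-k)$-strand across the top, an $(m-k)$-strand across the bottom, and $k$-strands on the left and on the right. Writing $\tau$ for the trivalent turnback capping the right boundary in the statement and using $\mathbb{Q}(q^{1/6})$-linearity of $\langle\,\cdot\,\rangle_3$, it suffices to understand each $\tau\cdot D_k$ and then to show $\sum_k c_k\,(\tau\cdot D_k)=0$.

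The two local inputs I would use are the idempotency of a single clasp (first item of Lemma~\ref{A2claspprop}), to merge or split single clasps freely along the right edge, and the single-clasp turnback annihilation (second item of Lemma~\ref{A2claspprop}), which kills any single clasp meeting a trivalent vertex on its boundary. Since $D_k$ already carries single $n$- and $m$-clasps on its right edge, attaching $\tau$ sets a trivalent vertex against them. The heart of the argument is a local computation: expanding the top-right $n$-clasp and bottom-right $m$-clasp by one step of the recursive definition of the $A_2$ clasp and evaluating the resulting bigon and closed loop by Definition~\ref{A2def}, I expect $\tau\cdot D_k$ to collapse onto just two neighboring reduced webs, of the shape $\tau\cdot D_k=\alpha_k E_k+\beta_k E_{k-1}$, where $E_k$ is the web obtained by absorbing the turnback, carrying single clasps of colors $n-1$ and $m-1$ joined by $k$ connecting strands together with the single output strand. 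The coefficients $\alpha_k,\beta_k$ emerge as explicit quantum integers from the single-clasp recursion.

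Granting this recursion, the proof concludes by coefficient bookkeeping. Regrouping and reindexing the second family of terms gives $\sum_k c_k\,(\tau\cdot D_k)=\sum_k\big(c_k\alpha_k+c_{k+1}\beta_{k+1}\big)E_k$, the boundary terms vanishing because $c_k$ and the reduced webs lie outside their range, so the claim reduces to $c_k\alpha_k+c_{k+1}\beta_{k+1}=0$ for all $k$. Since $c_{k+1}/c_k=-[n-k][m-k]/([k+1][n+m+1-k])$, this is exactly the identity $\alpha_k/\beta_{k+1}=[n-k][m-k]/([k+1][n+m+1-k])$, which I expect to hold because the coefficients in Definition~\ref{doubleA2clasp} are tailored precisely to this cancellation; this is the $A_2$ analogue of the mechanism by which the doubled Jones-Wenzl idempotent annihilates turnbacks.

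The main obstacle is the local step: executing the trivalent-vertex manipulation near the corner clasps without error, and confirming that $\tau\cdot D_k$ is genuinely supported on only the two webs $E_k,E_{k-1}$ rather than a longer sum, while pinning down $\alpha_k$ and $\beta_k$ as explicit quantum integers. Once that recursion is established, the final identity is a routine quantum-binomial check. As a safeguard I would first run the whole scheme in the smallest case $m=1$: the clasp of type $(n,1)$ equals $D_0-\tfrac{[n]}{[n+2]}D_1$, and $\tau\cdot D_0$ and $\tau\cdot D_1$ can each be evaluated by a single application of the two parts of Lemma~\ref{A2claspprop}, after which the cancellation is immediate.
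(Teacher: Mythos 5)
Your global mechanism is the right one, and it is essentially how this property is established in the literature: the paper itself gives \emph{no} proof of Lemma~\ref{doubleA2claspprop} (the clasp of type $(n,m)$ and its properties are imported from Ohtsuki--Yamada \cite{OhtsukiYamada97}), so your argument must stand on its own. Expanding the clasp as $\sum_k c_k D_k$ by Definition~\ref{doubleA2clasp}, showing that capping each $D_k$ yields a two-term combination $\alpha_k E_k+\beta_k E_{k-1}$ of standard reduced webs, and telescoping against the $c_k$ is the correct plan, and your computation of the ratio $c_{k+1}/c_k=-[n-k][m-k]/([k+1][n+m+1-k])$ is accurate.

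There are, however, two genuine gaps. First, you misread the web being attached. The figure in the statement contains no trivalent vertex: the two arcs leaving the clasp meet smoothly and form a single U-turn (cup) of color $1$ joining the innermost strand of the $n$-group to the innermost strand of the $m$-group. A trivalent vertex there is in fact impossible, because those two strands carry opposite orientations (the $n$-strands run into the clasp, the $m$-strands out of it), so they can never be two legs of a sink or a source. This is not cosmetic: your declared ``local input'' is Lemma~\ref{A2claspprop}(2), the annihilation of a single clasp by a vertex on two co-oriented adjacent strands, and that lemma is not the relevant tool for a cup joining oppositely oriented strands belonging to \emph{different} clasps. What the local step really requires is the partial-closure formula, Lemma~\ref{A2clasplem}(2). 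This already shows in your own test case $m=1$: the cup turns $\tau\cdot D_1$ into the $1$-strand partial closure of the $n$-clasp, whose value is $\frac{[n+1][n+2]}{[n][n+1]}=\frac{[n+2]}{[n]}$ times the reduced web, and it is exactly this coefficient that cancels $c_1=-[n]/[n+2]$; neither part of Lemma~\ref{A2claspprop} produces it, so the claim that the $m=1$ check follows from ``a single application of the two parts of Lemma~\ref{A2claspprop}'' does not go through as stated. Second, the decisive step --- proving $\tau\cdot D_k=\alpha_k E_k+\beta_k E_{k-1}$ and pinning down $\alpha_k,\beta_k$ --- is only conjectured, as you yourself acknowledge. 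For general $k$ it requires expanding the right-hand single clasps by their recursive definition, evaluating the resulting partial closures (which produce the $E_{k-1}$ term) and the bent strand that becomes an extra left turnback (which produces the $E_k$ term); this is where the entire content of the lemma lies, and without it the final quantum-binomial identity cannot even be formulated. As it stands, the proposal is a correct strategy outline whose core computation is missing and whose supporting lemmas are partly misidentified.
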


We use the following graphical notations to represent certain $A_2$ webs.
\begin{DEF}
For positive integers $n$ and $m$, 
\[
\,\tikz[baseline=-.6ex]{
\draw[->-=.1,->-=.9] (-.6,0) -- (.6,0);
\draw[->-=.1,->-=.9] (0,-.6) -- (0,.6);
\draw[fill=white] (-.15,-.3) rectangle (.15,.3);
\draw (-.15,.3) -- (.15,-.3);
\node at (-.4,0)[above]{$\scriptstyle{n}$};
\node at (.4,0)[above]{$\scriptstyle{n}$};
\node at (0,.5)[right]{$\scriptstyle{m}$};
\node at (0,-.5)[right]{$\scriptstyle{m}$};
}\,\in W_{n^{+}+m^{+}+n^{-}+m^{-}}
\]
is defined as follows:
$\,\tikz[baseline=-.6ex]{
\draw[->-=.1,->-=.9] (-.5,0) -- (.5,0);
\draw[->-=.1,->-=.9] (0,-.6) -- (0,.6);
\draw[fill=white] (-.1,-.3) rectangle (.1,.3);
\draw (-.1,.3) -- (.1,-.3);
\node at (-.3,0)[above]{$\scriptstyle{n}$}; 
\node at (.3,0)[above]{$\scriptstyle{n}$};
}\,
=
\,\tikz[baseline=-.6ex]{
\draw[->-=.5] (-.6,-.4) -- (.6,-.4);
\draw[->-=.6] (-.6,-.2) -- (.6,-.2);
\node[rotate=90] at (-.6,.1){$\scriptstyle{\cdots}$};
\node[rotate=90] at (.6,.1){$\scriptstyle{\cdots}$};
\draw[->-=.5] (-.6,.4) -- (.6,.4);
\draw[->-=.5] (-.4,-.6) -- (-.4,-.4);
\draw[->-=.5] (-.3,-.4) -- (-.3,-.2);
\draw[->-=.5] (-.2,-.2) -- (-.2,0);
\node at (0,.1){$\scriptstyle{\cdots}$};
\draw[->-=.5] (.3,.2) -- (.3,.4);
\draw[->-=.5] (.4,.4) -- (.4,.6);
}\,\in W_{n^{+}+1^{+}+n^{-}+1^{-}}$ for $m=1$,
$\,\tikz[baseline=-.6ex]{
\draw[->-=.1,->-=.9] (-.6,0) -- (.6,0);
\draw[->-=.1,->-=.9] (0,-.6) -- (0,.6);
\draw[fill=white] (-.15,-.3) rectangle (.15,.3);
\draw (-.15,.3) -- (.15,-.3);
\node at (-.4,0)[above]{$\scriptstyle{n}$};
\node at (.4,0)[above]{$\scriptstyle{n}$};
\node at (0,.5)[right]{$\scriptstyle{m}$};
\node at (0,-.5)[right]{$\scriptstyle{m}$};
}\,
=
\,\tikz[baseline=-.6ex]{
\draw[->-=.1] (-.6,0) -- (.9,0);
\draw[->-=.1,->-=.9] (0,-.6) -- (0,.6);
\draw[fill=white] (-.15,-.3) rectangle (.15,.3);
\draw (-.15,.3) -- (.15,-.3);
\node at (-.4,0)[above]{$\scriptstyle{n}$};
\node at (0,.5)[left]{$\scriptstyle{m-1}$};
\node at (0,-.5)[left]{$\scriptstyle{m-1}$};
\begin{scope}[xshift=.5cm]
\draw[->-=.1,->-=.9] (-.2,0) -- (.5,0);
\draw[->-=.1,->-=.9] (.1,-.6) -- (.1,.6);
\draw[fill=white] (0,-.3) rectangle (.2,.3);
\draw (0,.3) -- (.2,-.3);
\node at (-.2,0)[above]{$\scriptstyle{n}$}; 
\node at (.4,0)[above]{$\scriptstyle{n}$};
\end{scope}
}\,
$ for $m>1$.
We also define
$
\,\tikz[baseline=-.6ex]{
\draw[->-=.1,->-=.9] (-.6,0) -- (.6,0);
\draw[-<-=.1,-<-=.9] (0,-.6) -- (0,.6);
\draw[fill=white] (-.15,-.3) rectangle (.15,.3);
\draw (-.15,-.3) -- (.15,.3);
\node at (-.4,0)[above]{$\scriptstyle{n}$};
\node at (.4,0)[above]{$\scriptstyle{n}$};
\node at (0,.5)[right]{$\scriptstyle{m}$};
\node at (0,-.5)[right]{$\scriptstyle{m}$};
}\,\in W_{n^{+}+m^{-}+n^{-}+m^{+}}
$
in the same way.
\end{DEF}
\begin{DEF}
For positive integer $n$,
\[
\,\tikz[baseline=-.6ex]{
\draw (30:.5) -- (0,0);
\draw (150:.5) -- (0,0);
\draw[-<-=.2] (270:.5) -- (0,0);
\draw[->-=.2] (30:.5)
to[out=30, in=west] +(.3,.1);
\draw[->-=.2] (150:.5)
to[out=150, in=east] +(-.3,.1);
\draw[fill=white] (-30:.5) -- (90:.5) -- (210:.5) -- cycle;
\node at (30:.5) [above]{$\scriptstyle{n}$};
\node at (150:.5) [above]{$\scriptstyle{n}$};
\node at (270:.5) [left]{$\scriptstyle{n}$};
}\,\in W_{n^{+}+n^{+}+n^{+}}
\]
is defined as follows:
$\,\tikz[baseline=-.6ex]{
\draw[-<-=.5] (30:.4) -- (0,0);
\draw[-<-=.5] (150:.4) -- (0,0);
\draw[-<-=.5] (270:.4) -- (0,0);
}\,$ for $n=1$, 
$
\,\tikz[baseline=-.6ex]{
\draw (30:.4) -- (0,0);
\draw (150:.4) -- (0,0);
\draw[-<-=.2] (270:.4) -- (0,0);
\draw[->-=.2] (30:.4)
to[out=30, in=west] +(.3,.1);
\draw[->-=.2] (150:.4)
to[out=150, in=east] +(-.3,.1);
\draw[fill=white] (-30:.4) -- (90:.4) -- (210:.4) -- cycle;
\node at (30:.4) [above]{$\scriptstyle{n}$};
\node at (150:.4) [above]{$\scriptstyle{n}$};
\node at (270:.4) [left]{$\scriptstyle{n}$};
}\,
=
\,\tikz[baseline=-.6ex, xscale=-1]{
\draw (30:.2) -- (0,0);
\draw[->-=.8] (0,0) to[out=150, in=east] (170:.5);
\draw[-<-=.2] (270:.3) -- (0,0);
\draw[->-=.2, ->-=.9] (30:.2) -- +(.9,0);
\draw[fill=white] (-30:.3) -- (90:.3) -- (210:.3) -- cycle;
\draw[-<-=.2] (.6,-.4) -- (.6,.4);
\draw[-<-=.4, ->-=.9] (-.5,.4) -- (1.0,.4);
\draw[fill=white] (.5,-.2) rectangle (.7,.3);
\draw (.5,-.2) -- (.7,.3);
\node at (1,0) [left]{$\scriptstyle{n-1}$};
\node at (150:.5) [below right]{$\scriptstyle{n-1}$};
\node at (270:.3) [below]{$\scriptstyle{n-1}$};
\node at (150:.5) [above right]{$\scriptstyle{1}$};
\node at (1,.4) [left]{$\scriptstyle{1}$};
}\,$
for $n>1$.
We also define 
$\,\tikz[baseline=-.6ex]{
\draw (30:.4) -- (0,0);
\draw (150:.4) -- (0,0);
\draw[->-=.2] (270:.4) -- (0,0);
\draw[-<-=.2] (30:.4)
to[out=30, in=west] +(.3,.1);
\draw[-<-=.2] (150:.4)
to[out=150, in=east] +(-.3,.1);
\draw[fill=white] (-30:.4) -- (90:.4) -- (210:.4) -- cycle;
\node at (30:.4) [above]{$\scriptstyle{n}$};
\node at (150:.4) [above]{$\scriptstyle{n}$};
\node at (270:.4) [left]{$\scriptstyle{n}$};
}\,\in W_{n^{-}+n^{-}+n^{-}}$ in the same way.
\end{DEF}

We sometimes omit orientations of $A_2$ webs when the orientations obviously turn out from the previous $A_2$ webs.

We review some formulas for the $A_2$ bracket. 

\begin{LEM}[\cite{Yuasa17}]\label{coloredvertex}\ 
\begin{enumerate}
\item%(1)
$\,\tikz[baseline=-.6ex]{
\draw[->-=.5] (-.6,.0) -- (-.3,.0);
\draw (-.3,.0) -- (.3,.0);
\draw[->-=.8] (.3,.0) -- (.6,.0);
\draw[->-=.2, ->-=.9] (-.3,-.4) -- (-.3,.4);
\draw[->-=.2, ->-=.9] (.3,-.4) -- (.3,.4);
\draw[fill=white] (-.4,.2) rectangle +(.2,-.4);
\draw (-.4,.2) -- +(.2,-.4);
\draw[fill=white] (.2,.2) rectangle +(.2,-.4);
\draw (.2,.2) -- +(.2,-.4);
\node at (-.6,0)[left]{$\scriptstyle{n}$};
\node at (0,0)[above]{$\scriptstyle{n}$};
\node at (.6,0)[right]{$\scriptstyle{n}$};
\node at (-.3,.4)[above]{$\scriptstyle{m_1}$};
\node at (-.3,-.4)[below]{$\scriptstyle{m_1}$};
\node at (.3,.4)[above]{$\scriptstyle{m_2}$};
\node at (.3,-.4)[below]{$\scriptstyle{m_2}$};
}\,
=
\,\tikz[baseline=-.6ex]{
\draw[->-=.2, ->-=.9] (-.3,0) -- (.3,0);
\draw[->-=.2, ->-=.9] (0,-.4) -- (0,.4);
\draw[fill=white] (-.1,-.2) rectangle (.1,.2);
\draw (-.1,.2) -- (.1,-.2);
\node at (-.3,0)[left]{$\scriptstyle{n}$};
\node at (.3,0)[right]{$\scriptstyle{n}$};
\node at (0,.4)[above]{$\scriptstyle{m_1+m_2}$};
\node at (0,-.4)[below]{$\scriptstyle{m_1+m_2}$};
}\,
$,
\item%(2)
$\,\tikz[baseline=-.6ex]{
\draw[->-=.2, ->-=.9] (-.4,0) -- (.4,0);
\draw[->-=.2, ->-=.9] (0,-.4) -- (0,.4);
\draw[fill=white] (-.2,-.2) rectangle +(.4,.4);
\draw (-.2,.2) -- +(.4,-.4);
\node at (-.4,0)[left]{$\scriptstyle{n}$};
\node at (0,-.4)[below]{$\scriptstyle{n}$};
}\,
=
\,\tikz[baseline=-.6ex]{
\draw[->-=.5] (-.6,.0) -- (-.3,.0);
\draw (-.3,.0) -- (.3,.0);
\draw[->-=.8] (.3,.0) -- (.6,.0);
\draw[->-=.2] (-.3,-.4) -- (-.3,.0);
\draw[->-=.8] (.3,.0) -- (.3,.4);
\draw[fill=white] (-.5,-.2) -- (-.1,-.2) -- (-.1,.2) -- cycle;
\draw[fill=white, rotate=180] (-.5,-.2) -- (-.1,-.2) -- (-.1,.2) -- cycle;
\node at (-.6,0)[left]{$\scriptstyle{n}$};
\node at (.6,0)[right]{$\scriptstyle{n}$};
\node at (.3,.4)[above]{$\scriptstyle{n}$};
\node at (-.3,-.4)[below]{$\scriptstyle{n}$};
}$,
\item%(3)
$\,\tikz[baseline=-.6ex]{
\draw[->-=.2, -<-=.9] (.0,.4) -- (.6,.4);
\draw[->-=.2, ->-=.9] (.0,.0) -- (.6,.0);
\draw[-<-=.8] (.3,.3) [rounded corners]-- (.3,-.3) -- (.6,-.3);
\draw[fill=white] (.2,.3) -- (.4,.3) -- (.4,.5) -- cycle;
\draw[fill=white] (.2,.2) rectangle +(.2,-.4);
\draw (.2,.2) -- +(.2,-.4);
\node at (.6,.4)[right]{$\scriptstyle{m}$};
\node at (.6,.0)[right]{$\scriptstyle{n}$};
\node at (.6,-.3)[right]{$\scriptstyle{m}$};
}\,
=
\,\tikz[baseline=-.6ex, yscale=-1]{
\draw[->-=.2, -<-=.9] (.0,.4) -- (.6,.4);
\draw[->-=.2, ->-=.9] (.0,.0) -- (.6,.0);
\draw[-<-=.8] (.3,.3) [rounded corners]-- (.3,-.3) -- (.6,-.3);
\draw[fill=white] (.2,.3) -- (.4,.3) -- (.4,.5) -- cycle;
\draw[fill=white] (.2,.2) rectangle +(.2,-.4);
\draw (.2,.2) -- +(.2,-.4);
\node at (.6,.4)[right]{$\scriptstyle{m}$};
\node at (.6,.0)[right]{$\scriptstyle{n}$};
\node at (.6,-.3)[right]{$\scriptstyle{m}$};
}\,$,
\item%(4) 
$\Big\langle\,\tikz[baseline=-.6ex]{
\draw[->-=.4] (.0,.4) -- (.4,.4);
\draw[-<-=.8] (.4,.4) -- (.4,-.4);
\draw[-<-=.8] (.3,.4) -- (.3,-.4);
\draw[->-=.2, ->-=.9] (.0,.0) -- (.7,0);
\draw[fill=white] (.2,.2) rectangle +(.3,-.4);
\draw (.2,.2) -- +(.3,-.4);
\node at (.0,.0)[left]{$\scriptstyle{n}$};
\node at (.0,.4)[left]{$\scriptstyle{1}$};
}\,\Big\rangle_3
=
\,\tikz[baseline=-.6ex]{
\draw[->-=.4] (.0,-.1) -- (.3,-.1);
\draw[-<-=.8] (.3,-.1) -- (.3,-.3);
\draw[-<-=.8] (.2,-.1) -- (.2,-.3);
\draw[->-=.5] (.0,.1) -- (.5,.1);
\node at (.0,.1)[left]{$\scriptstyle{n}$};
\node at (.0,-.1)[left]{$\scriptstyle{1}$};
}\,
+\sum_{i=0}^{n-1}
\,\tikz[baseline=-.6ex]{
\draw[->-=.5] (.0,.4) -- (.7,.4);
\draw[->-=.2, ->-=.9] (.0,-.05) -- (.7,-.05);
\draw[->-=.4] (.0,.3) -- (.3,.3);
\draw[->-=.4] (.0,.2) -- (.3,.2);
\draw[->-=.2] (.3,-.4) -- (.3,.3);
\draw[->-=.2] (.4,-.4) -- (.4,.2);
\draw[->-=.8] (.4,.2) -- (.7,.2);
\draw[fill=white] (.2,.1) rectangle +(.3,-.3);
\draw (.2,.1) -- +(.3,-.3);
\node at (.7,.4)[right]{$\scriptstyle{n-i-1}$};
\node at (.7,.2)[right]{$\scriptstyle{1}$};
\node at (.7,-.05)[right]{$\scriptstyle{i}$};
}$,
\item%(5)
$\bigg\langle\,\tikz[baseline=-.6ex]{
\draw[-<-=.8] (.0,.2) -- +(-.4,.0);
\draw[-<-=.7] (.6,.4) -- +(.4,.0);
\draw[->-=.7] (.6,.0) -- +(.4,.0);
\draw (.0,.4) -- (.6,.4);
\draw (.0,.0) -- (.6,.0);
\draw[-<-=.9] (.3,.3) [rounded corners]-- (.3,-.3) -- (1.0,-.3);
\draw[fill=white] (.2,.3) -- (.4,.3) -- (.4,.5) -- cycle;
\draw[fill=white] (.2,.2) rectangle +(.2,-.4);
\draw (.2,.2) -- +(.2,-.4);
\draw[fill=white] (.0,.5) rectangle (-.1,-.2);
\draw[fill=white] (.6,.2) rectangle +(.1,-.4);
\node at (-.4,.2)[left]{$\scriptstyle{n+m}$};
\node at (1.0,.4)[right]{$\scriptstyle{m}$};
\node at (1.0,.0)[right]{$\scriptstyle{n}$};
\node at (1.0,-.3)[right]{$\scriptstyle{m}$};
}\,\bigg\rangle_{\!3}
=\bigg\langle\,\tikz[baseline=-.6ex]{
\draw[-<-=.8] (.0,.2) -- +(-.4,.0);
\draw[-<-=.9] (.0,.4) -- (.6,.4);
\draw[->-=.9] (.0,.0) -- (.6,.0);
\draw[-<-=.9] (.3,.3) [rounded corners]-- (.3,-.3) -- (.6,-.3);
\draw[fill=white] (.2,.3) -- (.4,.3) -- (.4,.5) -- cycle;
\draw[fill=white] (.2,.2) rectangle +(.2,-.4);
\draw (.2,.2) -- +(.2,-.4);
\draw[fill=white] (.0,.5) rectangle (-.1,-.2);
\node at (-.4,.2)[left]{$\scriptstyle{n+m}$};
\node at (.6,.4)[right]{$\scriptstyle{m}$};
\node at (.6,.0)[right]{$\scriptstyle{n}$};
\node at (.6,-.3)[right]{$\scriptstyle{m}$};
}\,\bigg\rangle_{\!3}$,
\item%(6)
$\bigg\langle\,\tikz[baseline=-.6ex]{
\draw[-<-=.5] (-.4,.0) -- +(-.2,.0);
\draw[-<-=.5] (.4,.0) -- +(.2,.0);
\draw[-<-=.8] (.0,-.3) -- +(.0,-.3);
\draw (-.3,.0) -- (.0,.0);
\draw (.3,.0) -- (.0,.0);
\draw (.0,-.3) -- (.0,.0);
\draw[fill=white] (-.4,-.2) rectangle (-.3,.2);
\draw[fill=white] (.4,-.2) rectangle (.3,.2);
\draw[fill=white] (-.2,-.3) rectangle (.2,-.4);
\draw[fill=white] (-.2,-.2) -- (.2,-.2) -- (.2,.2) -- cycle;
\node at (-.6,0)[left]{$\scriptstyle{n}$};
\node at (.6,0)[right]{$\scriptstyle{n}$};
\node at (.0,-.6)[below]{$\scriptstyle{n}$};
}\bigg\rangle_3
=\bigg\langle\,\tikz[baseline=-.6ex]{
\draw[-<-=.5] (-.4,.0) -- +(-.2,.0);
\draw[-<-=.5] (.4,.0) -- +(.2,.0);
\draw[-<-=.8] (.0,-.3) -- +(.0,-.3);
\draw (-.3,.0) -- (.0,.0);
\draw (.4,.0) -- (.0,.0);
\draw (.0,-.3) -- (.0,.0);
\draw[fill=white] (-.4,-.2) rectangle (-.3,.2);
\draw[fill=white] (-.2,-.3) rectangle (.2,-.4);
\draw[fill=white] (-.2,-.2) -- (.2,-.2) -- (.2,.2) -- cycle;
\node at (-.6,0)[left]{$\scriptstyle{n}$};
\node at (.6,0)[right]{$\scriptstyle{n}$};
\node at (.0,-.6)[below]{$\scriptstyle{n}$};
}\bigg\rangle_3$.
\end{enumerate}
The above equations also hold for the opposite orientations.
\end{LEM}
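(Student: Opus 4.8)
The plan is to prove all six identities uniformly by exploiting the characterizing properties of the $A_2$ clasps recorded in Lemmas~\ref{A2claspprop} and~\ref{doubleA2claspprop}, rather than by resolving every web into a linear combination of basis webs. The governing principle is the standard uniqueness of clasps: for fixed boundary data, the single clasp (the box labeled $n$) and the clasp of type $(n,m)$ of Definition~\ref{doubleA2clasp} are the \emph{only} webs that simultaneously (i) equal the all-through-strand identity web plus webs with strictly fewer through-strands, and (ii) are annihilated by precomposing with any turnback. Property (ii) is exactly the content of the second item of Lemma~\ref{A2claspprop} and of Lemma~\ref{doubleA2claspprop}, while (i) is visible from the inductive definitions. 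Consequently, for each identity it suffices to verify that the left-hand side has the same two characterizing properties as the clasp that appears on the right-hand side; uniqueness then forces the two sides to coincide.

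I expect the identities (1), (3), (5), and (6) to yield quickly to this scheme. For the fusion identity (1) I would check that placing the type-$(n,m_1)$ and type-$(n,m_2)$ clasps in a row annihilates every turnback on the combined boundary: any such turnback meets at least one constituent clasp in a reducible way and is killed by Lemma~\ref{doubleA2claspprop}. Since the composite retains the all-through-strand term with coefficient $1$, it must equal the type-$(n,m_1+m_2)$ clasp. Identity (5) is of the same nature, now letting the large $(n+m)$-clasp swallow the adjacent configuration via the absorption property of Lemma~\ref{A2claspprop}(1), and (6) follows by the same absorption of a clasp into a trivalent vertex. For the reflection identity (3) I would observe that the defining recursion of the clasps is invariant under the vertical flip, so both sides satisfy the same characterization and are therefore equal.

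The two identities that I expect to require genuine work are (2) and the expansion (4). For (2) I would argue by induction on $n$, substituting the inductive definition of the type-$(n,n)$ clasp and of the trivalent-vertex web into both sides and matching the recursion step against the inductive definition of the single $A_2$ clasp; the base case $n=1$ is the local relation of the $A_2$ bracket in Definition~\ref{A2def}. The expansion (4) is the one honest computation: I would resolve the single strand past the clasp using the inductive definition, producing the displayed head term together with the sum over $i$, and then verify term by term that the quantum-binomial coefficients inherited from Definition~\ref{doubleA2clasp} telescope exactly to the claimed expression while no spurious turnback terms survive. Controlling this coefficient bookkeeping, and confirming that every unwanted term is killed by Lemma~\ref{A2claspprop}, is the main obstacle. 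Once (4) is established, the opposite-orientation versions of all six identities follow formally, since reversing all orientations is a symmetry of the $A_2$ bracket and of each clasp definition used above; this is the route taken in \cite{Yuasa17}.
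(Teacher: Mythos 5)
First, a point of comparison: this paper does not prove Lemma~\ref{coloredvertex} at all --- it is imported from \cite{Yuasa17} --- so your proposal can only be judged on its own merits, and on those merits it has a genuine gap. You have misidentified the objects appearing in items (1)--(3). The rectangles-with-a-diagonal there are \emph{not} the $A_2$ clasps of type $(n,m)$ of Definition~\ref{doubleA2clasp} (those join anti-parallel bundles and are drawn with a short tick, not a diagonal); they are the transverse-crossing webs introduced in the definition following Lemma~\ref{doubleA2claspprop}, built as staircases of source--sink vertex pairs and containing \emph{no clasps whatsoever}, with boundary of type $n^{+}+m^{+}+n^{-}+m^{-}$. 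Your uniqueness-via-turnbacks scheme cannot apply to them: between the two co-oriented bundles no turnback arc even exists, and where a turnback does exist (between a $+$ bundle and the adjacent $-$ bundle) these webs do not annihilate it --- already for $n=m=1$, closing the crossing web with such an arc produces a bigon, which equals $\left[2\right]$ times an arc by Definition~\ref{A2def}, not $0$. Ironically, item (1) needs no machinery at all: the paper's recursive definition reads ``box$(n,m)=$ box$(n,m-1)$ glued to box$(n,1)$,'' so (1) with $m_2=1$ \emph{is} the definition, and the general case is a one-line induction on $m_2$ using associativity of gluing. Item (3) is likewise not automatic from ``the recursion is invariant under the vertical flip'': the recursions for the crossed box and for the colored trivalent vertex are manifestly asymmetric (each peels off a single strand on a chosen side), so the flip symmetry requires its own induction.

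For items (5) and (6), ``absorption'' is the right instinct but skips the entire content. The clasp to be removed is not adjacent to any other clasp: in (5) it is separated from the $(n+m)$-clasp by a colored trivalent vertex and a crossed box, and in (6) the trivalent vertex itself contains no clasp (its recursive definition uses a crossed box, not a clasp), so Lemma~\ref{A2claspprop}(1), which only absorbs a clasp placed directly against another clasp on the same strands, does not apply. What is actually needed is to expand the extra clasp as identity plus turnback terms and show that every turnback term vanishes after being pushed through the trivalent vertex and the crossed box into the remaining clasp --- precisely the step your plan does not supply. Your outlines for (2) and (4), induction matching the recursions plus an explicit expansion with coefficient bookkeeping, are reasonable in spirit; but the claim that (1), (3), (5), (6) ``yield quickly'' to the clasp-uniqueness characterization is where the proposal breaks down.
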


\begin{LEM}\label{A2clasplem}\ 
For $k=0,1,\dots,n$,
\begin{enumerate}
\item $\Big\langle\,\tikz[baseline=-.6ex, scale=0.1]{
\draw[->-=.5] (-5,0) -- (-1,0);
\draw[->-=.8, white, double=black, double distance=0.4pt, ultra thick] 
(0,-2) to[out=right, in=left] (5,2);
\draw[->-=.8, white, double=black, double distance=0.4pt, ultra thick] 
(0,2) to[out=right, in=left] (5,-2);
\draw[fill=white] (-1,-3) rectangle (0,3);
\node at (-1,0) [above left]{${\scriptstyle n}$};
\node at (4,2) [right] {${\scriptstyle k}$};
\node at (4,-2) [right] {${\scriptstyle n-k}$};
}\,\Big\rangle_{\! 3}
=q^{\frac{k(n-k)}{3}}
\Big\langle\,\tikz[baseline=-.6ex, scale=0.1]{
\draw[->-=.2] (-5,0) -- (5,0);
\draw[fill=white] (-1,-3) rectangle (0,3);
\node at (1,0) [above right]{${\scriptstyle n}$};
}\,\Big\rangle_{\! 3}$
\item[]
and $\Big\langle\,\tikz[baseline=-.6ex, scale=0.1]{
\draw[->-=.5] (-5,0) -- (-1,0);
\draw[->-=.8, white, double=black, double distance=0.4pt, ultra thick] 
(0,2) to[out=right, in=left] (5,-2);
\draw[->-=.8, white, double=black, double distance=0.4pt, ultra thick] 
(0,-2) to[out=right, in=left] (5,2);
\draw[fill=white] (-1,-3) rectangle (0,3);
\node at (-1,0) [above left]{${\scriptstyle n}$};
\node at (4,2) [right] {${\scriptstyle k}$};
\node at (4,-2) [right] {${\scriptstyle n-k}$};
}\,\Big\rangle_{\! 3}
=q^{-\frac{k(n-k)}{3}}
\Big\langle\,\tikz[baseline=-.6ex, scale=0.1]{
\draw[->-=.2] (-5,0) -- (5,0);
\draw[fill=white] (-1,-3) rectangle (0,3);
\node at (1,0) [above right]{${\scriptstyle n}$};
}\,\Big\rangle_{\! 3},$
\item 
$\Big\langle\,\tikz[baseline=-.6ex, scale=0.1]{
\draw[->-=.2] (-4,2) -- (4,2);
\draw[->-=.8, rounded corners=.1cm] (-2,-2) rectangle (2,1);
\draw[fill=white] (-.5,0) rectangle (.5,3);
\node at (0,3)[above]{$\scriptstyle{n}$};
\node at (4,2)[right]{$\scriptstyle{n-k}$};
\node at (0,-2)[below]{$\scriptstyle{k}$};
}\,\Big\rangle_{\! 3}
=\frac{\left[n+1\right]\left[n+2\right]}{\left[n-k+1\right]\left[n-k+2\right]}
\Big\langle\,\tikz[baseline=-.6ex, scale=0.1]{
\draw[->-=.2] (-4,0) -- (4,0);
\draw[fill=white] (-.5,-2) rectangle (.5,2);
\node at (0,2)[above]{$\scriptstyle{n-k}$};
}\,\Big\rangle_{\! 3},$
\item 
$\Big\langle\,\tikz[baseline=-.6ex, scale=0.1]{
\draw[->-=.5] (-4,1) -- (0,1);
\draw[white, double=black, double distance=0.4pt, ultra thick] 
(2,-2) to[out=west, in=west]
(2,1) to[out=east, in=west]
(5,1);
\draw[->-=.8, white, double=black, double distance=0.4pt, ultra thick] 
(0,1) to[out=east, in=west]
(2,1) to[out=east, in=east]
(2,-2);
\draw[fill=white] (-1,-1) rectangle (0,3);
\node at (-1,-1)[below]{$\scriptstyle{n}$};;
}\,\Big\rangle_{\! 3}
=q^{\frac{n^2+3n}{3}}
\Big\langle\,\tikz[baseline=-.6ex, scale=0.1]{
\draw[->-=.2] (-4,0) -- (4,0);
\draw[fill=white] (-.5,-2) rectangle (.5,2);
\node at (0,2)[above]{$\scriptstyle{n}$};
}\,\Big\rangle_{\! 3}$, 
$\Big\langle\,\tikz[baseline=-.6ex, scale=0.1]{
\draw (-4,1) -- (0,1);
\draw[white, double=black, double distance=0.4pt, ultra thick] 
(0,1) to[out=east, in=west]
(2,1) to[out=east, in=east]
(2,-2);
\draw[->-=.8, white, double=black, double distance=0.4pt, ultra thick] 
(2,-2) to[out=west, in=west]
(2,1) to[out=east, in=west]
(5,1);
\draw[fill=white] (-1,-1) rectangle (0,3);
\node at (-1,-1)[below]{$\scriptstyle{n}$};;
}\,\Big\rangle_{\! 3}
=q^{-\frac{n^2+3n}{3}}
\Big\langle\,\tikz[baseline=-.6ex, scale=0.1]{
\draw[->-=.2] (-4,0) -- (4,0);
\draw[fill=white] (-.5,-2) rectangle (.5,2);
\node at (0,2)[above]{$\scriptstyle{n}$};
}\,\Big\rangle_{\! 3}$.
\end{enumerate}
\end{LEM}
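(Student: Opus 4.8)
The plan is to establish all three formulas by reducing each crossing or closure to elementary moves and then invoking the defining relations of Definition~\ref{A2def} together with the absorption properties of the clasp in Lemma~\ref{A2claspprop}. For the two braiding identities in (1), I would first isolate the behaviour of a single transverse crossing between two adjacent strands issuing from the $n$-clasp. Resolving such a crossing by the first relation of Definition~\ref{A2def} gives $q^{1/3}$ times the orientation-respecting planar smoothing minus $q^{-1/6}$ times the trivalent ``$H$''-term, in which the two crossed strands merge at a trivalent vertex. Since those two strands are adjacent outputs of the $n$-clasp, this $H$-term is exactly the vanishing configuration of the second property in Lemma~\ref{A2claspprop}. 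The planar smoothing keeps the strands as parallel clasp outputs, so, ordering the elementary crossings so that each involves an adjacent pair, every $H$-term dies and each crossing contributes only $q^{1/3}$ (or $q^{-1/3}$ for the opposite crossing, from the second relation of Definition~\ref{A2def}). Sliding the entire $k$-strand cable across the $(n-k)$-strand cable is a composite of $k(n-k)$ such crossings, producing the factor $q^{\pm k(n-k)/3}$ asserted in (1).

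For the closure formula in (2) I would induct on the number $k$ of capped-off strands, the case $k=0$ being the identity with trivial prefactor $1$. The inductive step rests on a single-strand bubble evaluation: capping off one strand of a $j$-clasp, so that its colour drops to $j-1$, should multiply the result by $[j+2]/[j]$. To extract this scalar I would substitute the recursive definition of the $A_2$ clasp into the capped strand, discard the turnback summand using the second property in Lemma~\ref{A2claspprop}, and evaluate the remaining closed strand with the circle value $[3]$ from Definition~\ref{A2def}. Letting $j$ run from $n$ down to $n-k+1$, the scalars $[j+2]/[j]$ telescope to $[n+1][n+2]/([n-k+1][n-k+2])$, which is precisely the ratio of $\mathfrak{sl}_3$ quantum dimensions appearing in (2).

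The curl formula in (3) I would prove by induction on $n$. The base case $n=1$ is a single positive curl on a fundamental strand, which evaluates to $q^{4/3}$ by resolving its lone self-crossing with Definition~\ref{A2def}. For the inductive step, writing the $n$-clasp through its recursion as an $(n-1)$-clasp cabled with one extra strand and appealing to the regular-isotopy invariance noted after Definition~\ref{A2def}, the curl on the $n$-cable separates into a curl on the $(n-1)$-clasp, a curl on the single strand, and the two passages of that strand across the $(n-1)$-cable. The inductive hypothesis contributes $q^{((n-1)^2+3(n-1))/3}$ for the curl on the $(n-1)$-clasp, the single strand's own curl contributes $q^{4/3}$, and each of the two passages contributes $q^{(n-1)/3}$ by part (1) with $k=1$; summing the exponents yields $(n^2+3n)/3$, and the negative curl is handled symmetrically.

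The delicate bookkeeping occurs wherever the clasp absorption of Lemma~\ref{A2claspprop} is applied after strands have already been rearranged: one must verify that every $H$-turnback genuinely abuts the clasp in an adjacent position, so that the second property applies, and here the idempotency in the first property of Lemma~\ref{A2claspprop} is a convenient safety net for re-clasping. I expect the real obstacle to be the single-strand bubble evaluation underlying (2): coaxing the clasp recursion and the loop values to produce exactly the coefficient $[j+2]/[j]$, rather than a messier ratio of quantum integers that would spoil the telescoping, is the step demanding the most careful computation.
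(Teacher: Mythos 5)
The paper offers no proof of this lemma beyond a citation to Ohtsuki--Yamada, so the comparison is with the standard arguments. Your treatments of (1) and (3) are exactly those: resolve each elementary crossing by Definition~\ref{A2def}, observe that every trivalent ($H$-)term dies against the clasp by the second property of Lemma~\ref{A2claspprop}, and count exponents; then obtain (3) from (1) together with the single-strand curl value $q^{4/3}$ by induction. Those parts are sound, including your concern about ordering the crossings so that each $H$-term abuts the clasp on adjacent strands.

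There is, however, a genuine error in your key step for (2). After substituting the clasp recursion into the capped strand, you may \emph{not} discard the turnback summand: the annihilation property of Lemma~\ref{A2claspprop} kills a trivalent vertex only when it merges \emph{two adjacent outputs of the same clasp}, whereas after the partial closure the relevant vertex joins one output of the $(n-1)$-clasp to the wrap-around strand, which is not a clasp output. In fact the capped turnback term becomes a bigon inserted on the strand joining the two $(n-1)$-clasps, so by the bigon relation of Definition~\ref{A2def} and idempotency it equals $\left[2\right]$ times the $(n-1)$-clasp, not $0$. The correct one-strand closure coefficient is
\[
\left[3\right]-\frac{\left[n-1\right]}{\left[n\right]}\left[2\right]
=\frac{\left[3\right]\left[n\right]-\left[2\right]\left[n-1\right]}{\left[n\right]}
=\frac{\left[n+2\right]}{\left[n\right]},
\]
using $\left[3\right]\left[n\right]=\left[n+2\right]+\left[n\right]+\left[n-2\right]$ and $\left[2\right]\left[n-1\right]=\left[n\right]+\left[n-2\right]$. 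Your recipe as written produces $\left[3\right]$ instead, which contradicts the coefficient $\left[j+2\right]/\left[j\right]$ you yourself assert (they agree only at $j=1$), and telescoping it would give $\left[3\right]^{k}$ rather than $\frac{\left[n+1\right]\left[n+2\right]}{\left[n-k+1\right]\left[n-k+2\right]}$, so the error is not cosmetic. If you want to avoid this computation entirely, argue instead that the partial closure of the $n$-clasp absorbs clasps and annihilates $Y$-webs, hence is proportional to the $(n-k)$-clasp, and determine the constant by closing up completely and using the loop value $\frac{\left[m+1\right]\left[m+2\right]}{\left[2\right]}$ of an $m$-clasped circle.
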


\begin{proof}
It is easy to prove (1) -- (3). 
See, for example, \cite{OhtsukiYamada97}.
\end{proof}

\begin{LEM}\label{twistcoeff}\ 
\begin{enumerate}
\item 
$\Big\langle\,\tikz[baseline=-.6ex, scale=0.1]{
\draw[-<-=.5] (-4,2) -- (-1,2);
\draw[->-=.5] (-4,-2) -- (-1,-2);
\draw[-<-=.6, white, double=black, double distance=0.4pt, ultra thick] 
(0,2) to[out=right, in=left] (5,-2) -- (9,-2);
\draw[->-=.6, white, double=black, double distance=0.4pt, ultra thick] 
(0,-2) to[out=right, in=left] (5,2) -- (9,2);
\draw[fill=white] (-1,-3) rectangle (0,3);
\draw[fill=white] (7,1) rectangle (8,3);
\draw[fill=white] (7,-1) rectangle (8,-3);
\draw (-1,0) -- (0,0);
\node at (-2,2) [above]{${\scriptstyle n}$};
\node at (-2,-2) [below]{${\scriptstyle n}$};
\node at (9,2) [right]{${\scriptstyle n}$};
\node at (9,-2) [right]{${\scriptstyle n}$};
}\,\Big\rangle_{\! 3}
=(-1)^nq^{-\frac{n^2}{6}}
\Big\langle\,\tikz[baseline=-.6ex, scale=0.1]{
\draw[-<-=.5] (-4,2) -- (-1,2);
\draw[->-=.5] (-4,-2) -- (-1,-2);
\draw[->-=.7] (0,-2) to[out=east, in=south west] 
(3,0) to[out=north east , in=west] (9,2);
\draw[-<-=.7] (0,2) to[out=east, in=north west] 
(3,0) to[out=south east , in=west] (9,-2);
\draw[fill=white] (-1,-3) rectangle (0,3);
\draw[fill=white] (7,1) rectangle (8,3);
\draw[fill=white] (7,-1) rectangle (8,-3);
\draw[fill=white] (1,0) -- (3,2) -- (5,0) -- (3,-2) -- cycle;
\draw (-1,0) -- (0,0);
\draw (1,0) -- (5,0);
\node at (-2,2) [above]{${\scriptstyle n}$};
\node at (-2,-2) [below]{${\scriptstyle n}$};
\node at (9,2) [right]{${\scriptstyle n}$};
\node at (9,-2) [right]{${\scriptstyle n}$};
}\,\Big\rangle_{\! 3}$
\item[]
and $\Big\langle\,\tikz[baseline=-.6ex, scale=0.1]{
\draw[-<-=.5] (-4,2) -- (-1,2);
\draw[->-=.5] (-4,-2) -- (-1,-2);
\draw[->-=.6, white, double=black, double distance=0.4pt, ultra thick] 
(0,-2) to[out=right, in=left] (5,2) -- (9,2);
\draw[-<-=.6, white, double=black, double distance=0.4pt, ultra thick] 
(0,2) to[out=right, in=left] (5,-2) -- (9,-2);
\draw[fill=white] (-1,-3) rectangle (0,3);
\draw[fill=white] (7,1) rectangle (8,3);
\draw[fill=white] (7,-1) rectangle (8,-3);
\draw (-1,0) -- (0,0);
\node at (-2,2) [above]{${\scriptstyle n}$};
\node at (-2,-2) [below]{${\scriptstyle n}$};
\node at (9,2) [right]{${\scriptstyle n}$};
\node at (9,-2) [right]{${\scriptstyle n}$};
}\,\Big\rangle_{\! 3}
=(-1)^nq^{\frac{n^2}{6}}
\Big\langle\,\tikz[baseline=-.6ex, scale=0.1]{
\draw[-<-=.5] (-4,2) -- (-1,2);
\draw[->-=.5] (-4,-2) -- (-1,-2);
\draw[->-=.7] (0,-2) to[out=east, in=south west] 
(3,0) to[out=north east , in=west] (9,2);
\draw[-<-=.7] (0,2) to[out=east, in=north west] 
(3,0) to[out=south east , in=west] (9,-2);
\draw[fill=white] (-1,-3) rectangle (0,3);
\draw[fill=white] (7,1) rectangle (8,3);
\draw[fill=white] (7,-1) rectangle (8,-3);
\draw[fill=white] (1,0) -- (3,2) -- (5,0) -- (3,-2) -- cycle;
\draw (-1,0) -- (0,0);
\draw (1,0) -- (5,0);
\node at (-2,2) [above]{${\scriptstyle n}$};
\node at (-2,-2) [below]{${\scriptstyle n}$};
\node at (9,2) [right]{${\scriptstyle n}$};
\node at (9,-2) [right]{${\scriptstyle n}$};
}\,\Big\rangle_{\! 3}$
\item 
$\Big\langle\tikz[baseline=-.6ex, scale=0.1]{
\draw[-<-=.5] (-5,0) -- (1,0);
\draw[->-=.3, white, double=black, double distance=0.4pt, ultra thick] 
(0,0) to[out=south east, in=west] (3,-2)
to[out=east, in=west] (8,2) -- (10,2);
\draw[->-=.3, white, double=black, double distance=0.4pt, ultra thick] 
(0,0) to[out=north east, in=west] (3,2) 
to[out=east, in=west] (8,-2) -- (10,-2);
\draw[fill=white] (-4,-2) rectangle (-3,2);
\draw[fill=white] (8,1) rectangle (9,3);
\draw[fill=white] (8,-1) rectangle (9,-3);
\draw[fill=white] (0,-2) -- (0,2) -- (2,0) --cycle;
\node at (-5,0) [left]{${\scriptstyle n}$};
\node at (3,2) [above]{${\scriptstyle n}$};
\node at (3,-2) [below]{${\scriptstyle n}$};
}\Big\rangle_3
=
(-1)^nq^{-\frac{n^2+3n}{6}}\Big\langle\tikz[baseline=-.6ex, scale=0.1]{
\draw[-<-=.5] (-5,0) -- (1,0);
\draw[->-=.5, white, double=black, double distance=0.4pt, ultra thick] 
(0,0) to[out=north east, in=west] (3,2) -- (8,2);
\draw[->-=.5, white, double=black, double distance=0.4pt, ultra thick] 
(0,0) to[out=south east, in=west] (3,-2) -- (8,-2);
\draw[fill=white] (-4,-2) rectangle (-3,2);
\draw[fill=white] (6,1) rectangle (7,3);
\draw[fill=white] (6,-1) rectangle (7,-3);
\draw[fill=white] (0,-2) -- (0,2) -- (2,0) --cycle;
\node at (-5,0) [left]{${\scriptstyle n}$};
\node at (3,2) [above]{${\scriptstyle n}$};
\node at (3,-2) [below]{${\scriptstyle n}$};
}\Big\rangle_3
$
\item[]
and 
$\Big\langle\tikz[baseline=-.6ex, scale=0.1]{
\draw[-<-=.5] (-5,0) -- (1,0);
\draw[->-=.3, white, double=black, double distance=0.4pt, ultra thick] 
(0,0) to[out=north east, in=west] (3,2) 
to[out=east, in=west] (8,-2) -- (10,-2);
\draw[->-=.3, white, double=black, double distance=0.4pt, ultra thick] 
(0,0) to[out=south east, in=west] (3,-2)
to[out=east, in=west] (8,2) -- (10,2);
\draw[fill=white] (-4,-2) rectangle (-3,2);
\draw[fill=white] (8,1) rectangle (9,3);
\draw[fill=white] (8,-1) rectangle (9,-3);
\draw[fill=white] (0,-2) -- (0,2) -- (2,0) --cycle;
\node at (-5,0) [left]{${\scriptstyle n}$};
\node at (3,2) [above]{${\scriptstyle n}$};
\node at (3,-2) [below]{${\scriptstyle n}$};
}\Big\rangle_3
=
(-1)^nq^{\frac{n^2+3n}{6}}\Big\langle\tikz[baseline=-.6ex, scale=0.1]{
\draw[-<-=.5] (-5,0) -- (1,0);
\draw[->-=.5, white, double=black, double distance=0.4pt, ultra thick] 
(0,0) to[out=north east, in=west] (3,2) -- (8,2);
\draw[->-=.5, white, double=black, double distance=0.4pt, ultra thick] 
(0,0) to[out=south east, in=west] (3,-2) -- (8,-2);
\draw[fill=white] (-4,-2) rectangle (-3,2);
\draw[fill=white] (6,1) rectangle (7,3);
\draw[fill=white] (6,-1) rectangle (7,-3);
\draw[fill=white] (0,-2) -- (0,2) -- (2,0) --cycle;
\node at (-5,0) [left]{${\scriptstyle n}$};
\node at (3,2) [above]{${\scriptstyle n}$};
\node at (3,-2) [below]{${\scriptstyle n}$};
}\Big\rangle_3$.
\end{enumerate}
The above equations also hold for the opposite orientations.
\end{LEM}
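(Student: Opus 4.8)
The plan is to reduce both statements to a single principle: the braiding can be slid freely through the $A_2$ clasps, so that once the clasps have absorbed every resulting turnback the crossing must act as a scalar on the image of the relevant clasp, and the whole content of the lemma is the evaluation of that scalar. In part~(1) the image in question is that of the $A_2$ clasp of type $(n,n)$ from Definition~\ref{doubleA2clasp}, and in part~(2) it is the image of the $n$-colored trivalent vertex. That a crossing commuting with such a clasp is forced to be a scalar rests on the annihilation properties: Lemma~\ref{A2claspprop} and Lemma~\ref{doubleA2claspprop} kill every web meeting the clasp in fewer than the full color, which is exactly the statement that the clasp projects onto a summand whose endomorphism space is one-dimensional. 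Sliding the crossing through is then a sequence of Reidemeister moves from Figure~\ref{Reidemeister} together with the clasp-absorption identity of Lemma~\ref{A2claspprop}.

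First I would treat part~(2). Using the moves of Figure~\ref{Reidemeister} and Lemma~\ref{A2claspprop} to pass the crossing of the two outgoing $n$-legs through the trivalent vertex shows that the crossed vertex equals a scalar $\lambda_n^{\pm}$ times the uncrossed one. To pin down $\lambda_n^{\pm}$ I would apply the braiding twice, producing a full twist of the two legs, and invoke the ribbon balancing relation: composed with the vertex, this full twist is regularly isotopic to a single curl on the incoming $n$-strand corrected by curls on the two outgoing $n$-strands. Evaluating each curl by Lemma~\ref{A2clasplem}(3), whose coefficient is $q^{\pm(n^2+3n)/3}$, and collecting exponents yields $(\lambda_n^{\pm})^2 = q^{\mp(n^2+3n)/3}$, hence $\lambda_n^{\pm}=\pm q^{\mp(n^2+3n)/6}$.

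Part~(1) is then entirely parallel. Sliding the crossing of the two $n$-cables through the double clasp (again by Figure~\ref{Reidemeister} and Lemma~\ref{A2claspprop}) and appealing to Lemma~\ref{doubleA2claspprop} identifies the crossing with a scalar $\mu_n^{\pm}$ times the $A_2$ clasp of type $(n,n)$. The magnitude of $\mu_n^{\pm}$ comes from the same balancing computation, now relating the full twist of the two cables to the curls of the two incoming $n$-colored strands and of the outgoing strand of the top color; the three exponents furnished by Lemma~\ref{A2clasplem}(3) combine to give $(\mu_n^{\pm})^2 = q^{\mp n^2/3}$, so that $\mu_n^{\pm}=\pm q^{\mp n^2/6}$. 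The opposite-orientation statements in both parts follow by reflecting the diagrams and using the mirror skein relations of Definition~\ref{A2def}.

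The hard part will be fixing the sign, that is, showing the square root extracted above is exactly $(-1)^n$ rather than $+1$ or another square root. I would settle this by induction on $n$ using the recursive definition of the $A_2$ clasp: the $n$-colored twist is expressed through the $(n-1)$-colored one plus a correction term that is annihilated by the clasp via Lemma~\ref{A2claspprop}, and tracking the single elementary self-crossing introduced at each inductive step through the defining skein relation of Definition~\ref{A2def} contributes precisely one factor of $-1$, while the accompanying sliding coefficient $q^{\pm k(n-k)/3}$ of Lemma~\ref{A2clasplem}(1) accounts for the shift in the $q$-exponent between consecutive colors. The base case $n=1$, where the clasp is trivial and the vertex is the elementary one, is checked directly from Definition~\ref{A2def}. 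Besides the sign, the one other point requiring care is the justification that the crossing genuinely commutes with the double clasp; this is where Lemma~\ref{doubleA2claspprop} and the vertex identities of Lemma~\ref{coloredvertex} do the work, since they guarantee that all the turnbacks created while sliding are absorbed rather than surviving as independent webs.
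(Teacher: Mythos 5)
Your opening observation---that the clasp annihilation properties force the crossing to act as a scalar, so the lemma reduces to evaluating that scalar---is sound, and for part~(2) the balancing computation of the magnitude is legitimate, since all three legs of the vertex are single $n$-clasped cables and Lemma~\ref{A2clasplem}(3) does supply all three curl coefficients. But for part~(1) the step fails as written: the third object in your balancing identity is the $A_2$ clasp of type $(n,n)$, and its curl coefficient is \emph{not} furnished by Lemma~\ref{A2clasplem}(3). That lemma only covers the clasp on an $n$-cable of a single orientation, with exponent $q^{\pm\frac{n^2+3n}{3}}$, whereas the type-$(n,n)$ clasp has curl coefficient $q^{n^2+2n}$ (the twist of the simple object of highest weight $(n,n)$). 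Indeed, if you insert three copies of the exponent from Lemma~\ref{A2clasplem}(3) you get $(\mu_n^{\pm})^2=q^{\mp\frac{n^2+3n}{3}}$, not the $q^{\mp\frac{n^2}{3}}$ you assert; the value you wrote is only recovered after separately establishing the $(n,n)$ twist, an unproved claim of roughly the same difficulty as the lemma itself.

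The deeper problem is that balancing can never see the sign, and your plan for fixing it---induction on $n$ through the recursive clasp definition---is in substance the paper's own proof of part~(2), but your bookkeeping does not match it. The required per-step ratio is $C_n/C_{n-1}=-q^{-\frac{n+1}{3}}$, while Lemma~\ref{A2clasplem}(1) with $k=1$ gives $q^{\pm\frac{n-1}{3}}$; and in the correct account the per-step $(-1)$ comes not from ``one elementary self-crossing'' in the skein relation but from the $n=1$ instance of the statement itself ($C_1=-q^{-\frac{2}{3}}$), the elementary resolutions entering only as $(-q^{\frac{1}{6}})^{2(n-1)}$, i.e.\ with no net sign. Note also that once this induction is carried out it already determines the full coefficient, magnitude included, so the balancing step buys nothing for part~(2) and fails for part~(1). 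Finally, you miss the short route the paper actually takes for part~(1): expand the crossing of the two cables by the colored skein relation (Theorem~\ref{coloredA2}) and observe that Lemma~\ref{doubleA2claspprop} kills every term with $k<n$; the single surviving term $k=n$ carries exactly the coefficient $(-1)^n q^{\mp\frac{n^2}{6}}$, giving sign and magnitude simultaneously, with no induction and no twist computation.
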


\begin{proof}
(1) is derived by Lemma~\ref{doubleA2claspprop} and the colored $A_2$ skein relation in \cite{Yuasa17} (see Theorem~\ref{coloredA2} in Sect.~5).
We only prove the first equation of (2) by induction.
It is proven by straightforward calculation for $n=1$. 
Set $C_n=(-1)^nq^{-\frac{n^2+3n}{6}}$,
\begin{align*}
\Big\langle
%%%%%%%%%%
\tikz[baseline=-.6ex, scale=0.1]{
%%%
\draw[-<-=.5] (-5,0) -- (1,0);
\draw[->-=.3, white, double=black, double distance=0.4pt, ultra thick] 
(0,0) to[out=south east, in=west] (3,-2)
to[out=east, in=west] (8,2) -- (10,2);
\draw[->-=.3, white, double=black, double distance=0.4pt, ultra thick] 
(0,0) to[out=north east, in=west] (3,2) 
to[out=east, in=west] (8,-2) -- (10,-2);
%%%
\draw[fill=white] (-4,-2) rectangle (-3,2);
\draw[fill=white] (8,1) rectangle (9,3);
\draw[fill=white] (8,-1) rectangle (9,-3);
\draw[fill=white] (0,-2) -- (0,2) -- (2,0) --cycle;
%%%
\node at (-5,0) [left]{${\scriptstyle n}$};
\node at (3,2) [above]{${\scriptstyle n}$};
\node at (3,-2) [below]{${\scriptstyle n}$};
}
%%%%%%%%%%
\Big\rangle_3
&=
\Big\langle
%%%%%%%%%%
\tikz[baseline=-.6ex, scale=0.1]{
%%%
\draw (-5,0) -- (-4,0);
\draw (9,2.5) -- (10,2.5);
\draw (9,-2.5) -- (10,-2.5);
%%%
\draw[-<-=.5] (-3,2) -- (1,2);
\draw[->-=.9, white, double=black, double distance=0.4pt, ultra thick] 
(3,-1.5) {[rounded corners]-- (3,-3)} to[out=north east, in=west] (8,3);
\draw[->-=.3, white, double=black, double distance=0.4pt, ultra thick] 
(1,2) -- (1,-2) to[out=south, in=west] (3,-5)
to[out=east, in=west] (8,2);
\draw[->-=.3, white, double=black, double distance=0.4pt, ultra thick] 
(1,2) to[out=north, in=west] (3,5) 
to[out=east, in=west] (8,-2);
\draw[-<-=.3] (-3,-1.5) -- (3,-1.5);
\draw[->-=.9, white, double=black, double distance=0.4pt, ultra thick] 
(3,-1.5) {[rounded corners]-- (3,0)} to[out=south east, in=west] (8,-3);
%%%
\draw[fill=white] (0,-1) rectangle (2,-2);
\draw (0,-1) -- (2,-2);
%%%
\draw[fill=white] (-4,-3) rectangle (-3,3);
\draw[fill=white] (8,1) rectangle (9,4);
\draw[fill=white] (8,-1) rectangle (9,-4);
\draw[fill=white] (0,1) -- (0,3) -- (2,2) --cycle;
%%%
\node at (-5,0) [left]{${\scriptstyle n}$};
\node at (3,5) [above]{${\scriptstyle n-1}$};
\node at (3,-5) [below]{${\scriptstyle n-1}$};
}
%%%%%%%%%%
\Big\rangle_3
=C_1
\Big\langle
%%%%%%%%%%
\tikz[baseline=-.6ex, scale=0.1]{
%%%
\draw (-5,0) -- (-4,0);
\draw (9,2.5) -- (10,2.5);
\draw (9,-2.5) -- (10,-2.5);
%%%
\draw[-<-=.5] (-3,2) -- (1,2);
\draw[->-=.9, white, double=black, double distance=0.4pt, ultra thick] 
(3,-1.5) -- (3,0) to[out=north, in=west] (8,3);
\draw[->-=.3, white, double=black, double distance=0.4pt, ultra thick] 
(1,2) -- (1,-2) to[out=south, in=west] (3,-5)
to[out=east, in=west] (8,2);
\draw[->-=.3, white, double=black, double distance=0.4pt, ultra thick] 
(1,2) to[out=north, in=west] (3,5) 
to[out=east, in=west] (8,-2);
\draw[-<-=.3] (-3,-1.5) -- (3,-1.5);
\draw[->-=.9, white, double=black, double distance=0.4pt, ultra thick] 
(3,-1.5) to[out=south, in=west] (8,-3);
%%%
\draw[fill=white] (0,-1) rectangle (2,-2);
\draw (0,-1) -- (2,-2);
%%%
\draw[fill=white] (-4,-3) rectangle (-3,3);
\draw[fill=white] (8,1) rectangle (9,4);
\draw[fill=white] (8,-1) rectangle (9,-4);
\draw[fill=white] (0,1) -- (0,3) -- (2,2) --cycle;
%%%
\node at (-5,0) [left]{${\scriptstyle n}$};
\node at (3,5) [above]{${\scriptstyle n-1}$};
\node at (3,-5) [below]{${\scriptstyle n-1}$};
}
%%%%%%%%%%
\Big\rangle_3
=C_1q^{-\frac{2}{3}(n-1)}
\Big\langle
%%%%%%%%%%
\tikz[baseline=-.6ex, scale=0.1]{
%%%
\draw (-5,0) -- (-4,0);
\draw (9,2.5) -- (10,2.5);
\draw (9,-2.5) -- (10,-2.5);
%%%
\draw[-<-=.5] (-3,2) -- (1,2);
\draw[white, double=black, double distance=0.4pt, ultra thick] 
(7,-1.5) -- (7,2) -- (8,2);
\draw[->-=.8, white, double=black, double distance=0.4pt, ultra thick] 
(1,2) -- (1,-2) to[out=south, in=west] (2,-4)
to[out=east, in=south] (3,0)
to[out=north, in=west] (8,3);
\draw[-<-=.2, white, double=black, double distance=0.4pt, ultra thick] 
(-3,-1.5) -- (7,-1.5);
\draw[->-=.8, white, double=black, double distance=0.4pt, ultra thick] 
(1,2) to[out=north, in=west] (3,4) 
to[out=east, in=west] (6,-3) -- (8,-3);
\draw[white, double=black, double distance=0.4pt, ultra thick] 
(7,-1.5) -- (7,-2) -- (8,-2);
%%%
\draw[fill=white] (0,-1) rectangle (2,-2);
\draw (0,-1) -- (2,-2);
%%%
\draw[fill=white] (-4,-3) rectangle (-3,3);
\draw[fill=white] (8,1) rectangle (9,4);
\draw[fill=white] (8,-1) rectangle (9,-4);
\draw[fill=white] (0,1) -- (0,3) -- (2,2) --cycle;
%%%
\node at (-5,0) [left]{${\scriptstyle n}$};
\node at (3,4) [above]{${\scriptstyle n-1}$};
\node at (3,-4) [below]{${\scriptstyle n-1}$};
}
%%%%%%%%%%
\Big\rangle_3\\
&=C_1q^{-\frac{2}{3}(n-1)}(-q^{\frac{1}{6}})^{n-1}
\Big\langle
%%%%%%%%%%
\tikz[baseline=-.6ex, scale=0.1]{
%%%
\draw (-5,0) -- (-4,0);
\draw (9,2.5) -- (10,2.5);
\draw (9,-2.5) -- (10,-2.5);
%%%
\draw[-<-=.5] (-3,2) -- (1,2);
\draw[white, double=black, double distance=0.4pt, ultra thick] 
(7,-1.5) -- (7,2) -- (8,2);
\draw[->-=.3, white, double=black, double distance=0.4pt, ultra thick] 
(1,2) to[out=south, in=west] (3,0)
to[out=east, in=west] (6,3) -- (8,3);
\draw[-<-=.2, white, double=black, double distance=0.4pt, ultra thick] 
(-3,-1.5) -- (7,-1.5);
\draw[->-=.2, white, double=black, double distance=0.4pt, ultra thick] 
(1,2) to[out=north, in=west] (3,4) 
to[out=east, in=west] (6,-3) -- (8,-3);
\draw[white, double=black, double distance=0.4pt, ultra thick] 
(7,-1.5) -- (7,-2) -- (8,-2);
%%%
\draw[fill=white] (-4,-3) rectangle (-3,3);
\draw[fill=white] (8,1) rectangle (9,4);
\draw[fill=white] (8,-1) rectangle (9,-4);
\draw[fill=white] (0,1) -- (0,3) -- (2,2) --cycle;
%%%
\node at (-5,0) [left]{${\scriptstyle n}$};
\node at (3,4) [above]{${\scriptstyle n-1}$};
\node at (3,-4) [below]{${\scriptstyle n-1}$};
}
%%%%%%%%%%
\Big\rangle_3\\
&=C_1q^{-\frac{2}{3}(n-1)}(-q^{\frac{1}{6}})^{n-1}C_{n-1}
\Big\langle
%%%%%%%%%%
\tikz[baseline=-.6ex, scale=0.1]{
%%%
\draw (-5,0) -- (-4,0);
\draw (9,2.5) -- (10,2.5);
\draw (9,-2.5) -- (10,-2.5);
%%%
\draw[-<-=.5] (-3,1) -- (1,1);
\draw[white, double=black, double distance=0.4pt, ultra thick] 
(7,-1.5) -- (7,2) -- (8,2);
\draw[->-=.5, white, double=black, double distance=0.4pt, ultra thick] 
(1,1) to[out=north, in=west] (8,3);
\draw[-<-=.2, white, double=black, double distance=0.4pt, ultra thick] 
(-3,-1.5) -- (7,-1.5);
\draw[->-=.5, white, double=black, double distance=0.4pt, ultra thick, rounded corners] 
(1,1) -- (1,-3) -- (8,-3);
\draw[white, double=black, double distance=0.4pt, ultra thick] 
(7,-1.5) -- (7,-2) -- (8,-2);
%%%
\draw[fill=white] (-4,-3) rectangle (-3,3);
\draw[fill=white] (8,1) rectangle (9,4);
\draw[fill=white] (8,-1) rectangle (9,-4);
\draw[fill=white] (0,0) -- (0,2) -- (2,1) --cycle;
%%%
\node at (-5,0) [left]{${\scriptstyle n}$};
\node at (3,3) [above]{${\scriptstyle n-1}$};
\node at (3,-4) [below]{${\scriptstyle n-1}$};
}
%%%%%%%%%%
\Big\rangle_3\\
&=C_1q^{-\frac{2}{3}(n-1)}(-q^{\frac{1}{6}})^{n-1}C_{n-1}(-q^{\frac{1}{6}})^{n-1}
\Big\langle
%%%%%%%%%%
\tikz[baseline=-.6ex, scale=0.1]{
%%%
\draw (-5,0) -- (-4,0);
\draw (9,2.5) -- (10,2.5);
\draw (9,-2.5) -- (10,-2.5);
%%%
\draw[-<-=.5] (-3,1) -- (1,1);
\draw[white, double=black, double distance=0.4pt, ultra thick] 
(7,-1.5) -- (7,2) -- (8,2);
\draw[->-=.5, white, double=black, double distance=0.4pt, ultra thick] 
(1,1) to[out=north, in=west] (8,3);
\draw[-<-=.2, white, double=black, double distance=0.4pt, ultra thick] 
(-3,-1.5) -- (7,-1.5);
\draw[->-=.5, white, double=black, double distance=0.4pt, ultra thick, rounded corners] 
(1,1) -- (1,-3) -- (8,-3);
\draw[white, double=black, double distance=0.4pt, ultra thick] 
(7,-1.5) -- (7,-2) -- (8,-2);
%%%
\draw[fill=white] (0,-1) rectangle (2,-2);
\draw (0,-1) -- (2,-2);
%%%
\draw[fill=white] (-4,-3) rectangle (-3,3);
\draw[fill=white] (8,1) rectangle (9,4);
\draw[fill=white] (8,-1) rectangle (9,-4);
\draw[fill=white] (0,0) -- (0,2) -- (2,1) --cycle;
%%%
\node at (-5,0) [left]{${\scriptstyle n}$};
\node at (3,3) [above]{${\scriptstyle n-1}$};
\node at (3,-4) [below]{${\scriptstyle n-1}$};
}
%%%%%%%%%%
\Big\rangle_3
\end{align*}
The last equation is easily derived by using the $A_2$ skein relation $n-1$ times at the crossing. 
We applied the following calculation to the second line of the above equation.
\begin{align*}
\Big\langle
%%%%%%%%%%
\tikz[baseline=-.6ex, scale=0.1]{
%%%
\draw[rounded corners] (-3.5,5) -- (-3.5,-3) -- (3.5,-3) -- (3.5,5);
\draw[->-=.1, white, double=black, double distance=0.4pt, ultra thick] 
(-7,0) -- (7,0);
%%%
\draw[fill=white] (-6,-1) rectangle (-1,1);
\draw (-6,1) -- (-1,-1);
%%%
\draw[fill=white] (-6,3) rectangle (-1,4);
\draw[fill=white] (6,3) rectangle (1,4);
%%%
\node at (-3.5,5) [above]{${\scriptstyle n-1}$};
\node at (0,-4) [below]{${\scriptstyle n-1}$};
}
%%%%%%%%%%
\Big\rangle_3
&=
\Big\langle
%%%%%%%%%%
\tikz[baseline=-.6ex, scale=0.1]{
%%%
\draw (-3.5,4) -- (-3.5,5);
\draw (3.5,4) -- (3.5,5);
%%%
\draw[rounded corners] (-5,4) -- (-5,-4) -- (5,-4) -- (5,4);
\draw (-2,3) -- (-2,0);
\draw[rounded corners] (-3,0) -- (-3,-2) -- (3,-2) -- (3,0) -- (3,3);
\draw (0,0) -- (-7,0);
\draw[-<-=.1, white, double=black, double distance=0.4pt, ultra thick] 
(0,0) -- (7,0);
%%%
\draw[fill=white] (-6,-1) rectangle (-4,1);
\draw (-6,1) -- (-4,-1);
%%%
\draw[fill=white] (-6,3) rectangle (-1,4);
\draw[fill=white] (6,3) rectangle (1,4);
%%%
\node at (-3.5,5) [above]{${\scriptstyle n-1}$};
\node at (0,-4) [below]{${\scriptstyle n-2}$};
}
%%%%%%%%%%
\Big\rangle_3
=
q^{-\frac{1}{3}}
\Big\langle
%%%%%%%%%%
\tikz[baseline=-.6ex, scale=0.1]{
%%%
\draw (-3.5,4) -- (-3.5,5);
\draw (3.5,4) -- (3.5,5);
%%%
\draw[->-=.5, rounded corners] (-5,4) -- (-5,-4) -- (5,-4) -- (5,4);
\draw (-2,3) -- (-2,0);
\draw[->-=.5, rounded corners] (-3,0) -- (-3,-2) -- (1,-2) -- (1,0) -- (0,0);
\draw (0,0) -- (-7,0);
\draw[-<-=.3, white, double=black, double distance=0.4pt, ultra thick, rounded corners] 
(2,3) -- (2,0) -- (7,0);
%%%
\draw[fill=white] (-6,-1) rectangle (-4,1);
\draw (-6,1) -- (-4,-1);
%%%
\draw[fill=white] (-6,3) rectangle (-1,4);
\draw[fill=white] (6,3) rectangle (1,4);
%%%
\node at (-3.5,5) [above]{${\scriptstyle n-1}$};
\node at (0,-4) [below]{${\scriptstyle n-2}$};
}
%%%%%%%%%%
\Big\rangle_3
-q^{\frac{1}{6}}
\Big\langle
%%%%%%%%%%
\tikz[baseline=-.6ex, scale=0.1]{
%%%
\draw (-3.5,4) -- (-3.5,5);
\draw (3.5,4) -- (3.5,5);
%%%
\draw[->-=.5, rounded corners] (-5,4) -- (-5,-4) -- (5,-4) -- (5,4);
\draw (-2,3) -- (-2,0);
\draw (2,3) -- (2,0);
\draw[->-=.5, rounded corners] (-3,0) -- (-3,-2) -- (3,-2) -- (3,0);
\draw (3,0) -- (-7,0);
\draw[-<-=.3, white, double=black, double distance=0.4pt, ultra thick] (3,0) -- (7,0);
%%%
\draw[fill=white] (-6,-1) rectangle (-4,1);
\draw (-6,1) -- (-4,-1);
%%%
\draw[fill=white] (-6,3) rectangle (-1,4);
\draw[fill=white] (6,3) rectangle (1,4);
%%%
\node at (-3.5,5) [above]{${\scriptstyle n-1}$};
\node at (0,-4) [below]{${\scriptstyle n-2}$};
}
%%%%%%%%%%
\Big\rangle_3\\
&=
-q^{\frac{1}{6}}
\Big\langle
%%%%%%%%%%
\tikz[baseline=-.6ex, scale=0.1]{
%%%
\draw (-3.5,4) -- (-3.5,5);
\draw (3.5,4) -- (3.5,5);
%%%
\draw[->-=.5, rounded corners] (-5,4) -- (-5,-4) -- (5,-4) -- (5,4);
\draw[->-=.5, rounded corners] (-2,3) -- (-2,2) -- (2,2) -- (2,3);
\draw (3,0) -- (-7,0);
\draw[-<-=.3, white, double=black, double distance=0.4pt, ultra thick] (3,0) -- (7,0);
%%%
\draw[fill=white] (-6,-1) rectangle (-4,1);
\draw (-6,1) -- (-4,-1);
%%%
\draw[fill=white] (-6,3) rectangle (-1,4);
\draw[fill=white] (6,3) rectangle (1,4);
%%%
\node at (-3.5,5) [above]{${\scriptstyle n-1}$};
\node at (0,-4) [below]{${\scriptstyle n-2}$};
\node at (0,2) [above]{${\scriptstyle 1}$};
\node at (0,0) [below]{${\scriptstyle 1}$};}
%%%%%%%%%%
\Big\rangle_3
=\cdots
=(-q^{\frac{1}{6}})^{n-2}
\Big\langle
%%%%%%%%%%
\tikz[baseline=-.6ex, scale=0.1]{
%%%
\draw (-3.5,4) -- (-3.5,5);
\draw (3.5,4) -- (3.5,5);
%%%
\draw[->-=.5, rounded corners] (-2,3) -- (-2,2) -- (2,2) -- (2,3);
\draw (-4,3) -- (-4,-1);
\draw[->-=.4, rounded corners] (-5,-1) -- (-5,-3) -- (5,-3) -- (5,-1) -- (5,3);
\draw (0,-1) -- (-7,-1);
\draw[-<-=.1, white, double=black, double distance=0.4pt, ultra thick] 
(0,-1) -- (7,-1);
%%%
\draw[fill=white] (-6,3) rectangle (-1,4);
\draw[fill=white] (6,3) rectangle (1,4);
%%%
\node at (-3.5,5) [above]{${\scriptstyle n-1}$};
\node at (0,3) [below]{${\scriptstyle n-2}$};
}
%%%%%%%%%%
\Big\rangle_3\\
&=(-q^{\frac{1}{6}})^{n-1}
\Big\langle
%%%%%%%%%%
\tikz[baseline=-.6ex, scale=0.1]{
%%%
\draw (-3.5,4) -- (-3.5,5);
\draw (3.5,4) -- (3.5,5);
%%%
\draw[->-=.5, rounded corners] (-3.5,3) -- (-3.5,2) -- (3.5,2) -- (3.5,3);
\draw[-<-=.1] (-7,-1) -- (7,-1);
%%%
\draw[fill=white] (-6,3) rectangle (-1,4);
\draw[fill=white] (6,3) rectangle (1,4);
%%%
\node at (0,3) [below]{${\scriptstyle n-1}$};
\node at (0,0) [below]{${\scriptstyle 1}$};
}
%%%%%%%%%%
\Big\rangle_3\\
&\qquad+((-q^{\frac{1}{6}})^{n-2}q^{-\frac{1}{3}}\left[2\right]+(-q^{\frac{1}{6}})^{n-1})
\Big\langle
%%%%%%%%%%
\tikz[baseline=-.6ex, scale=0.1]{
%%%
\draw (-3.5,4) -- (-3.5,5);
\draw (3.5,4) -- (3.5,5);
%%%
\draw[->-=.5, rounded corners] (-2,3) -- (-2,2) -- (2,2) -- (2,3);
\draw[-<-=.5, rounded corners] (-7,-1) -- (-5,-1) -- (-5,3);
\draw[->-=.5, rounded corners] (7,-1) -- (5,-1) -- (5,3);
%%%
\draw[fill=white] (-6,3) rectangle (-1,4);
\draw[fill=white] (6,3) rectangle (1,4);
%%%
\node at (0,3) [below]{${\scriptstyle n-2}$};
\node at (-6,-1) [below]{${\scriptstyle 1}$};
\node at (6,-1) [below]{${\scriptstyle 1}$};
}
%%%%%%%%%%
\Big\rangle_3\\
&=(-q^{\frac{1}{6}})^{n-1}
\Big\langle
%%%%%%%%%%
\tikz[baseline=-.6ex, scale=0.1]{
%%%
\draw (-3.5,4) -- (-3.5,5);
\draw (3.5,4) -- (3.5,5);
%%%
\draw[->-=.5, rounded corners] (-3.5,3) -- (-3.5,2) -- (3.5,2) -- (3.5,3);
\draw[-<-=.1] (-7,-1) -- (7,-1);
%%%
\draw[fill=white] (-6,3) rectangle (-1,4);
\draw[fill=white] (6,3) rectangle (1,4);
%%%
\node at (0,3) [below]{${\scriptstyle n-1}$};
\node at (0,0) [below]{${\scriptstyle 1}$};
}
%%%%%%%%%%
\Big\rangle_3
-(-q^{\frac{1}{6}})^{n-1}(q^{-1})
\Big\langle
%%%%%%%%%%
\tikz[baseline=-.6ex, scale=0.1]{
%%%
\draw (-3.5,4) -- (-3.5,5);
\draw (3.5,4) -- (3.5,5);
%%%
\draw[->-=.5, rounded corners] (-2,3) -- (-2,2) -- (2,2) -- (2,3);
\draw[-<-=.5, rounded corners] (-7,-1) -- (-5,-1) -- (-5,3);
\draw[->-=.5, rounded corners] (7,-1) -- (5,-1) -- (5,3);
%%%
\draw[fill=white] (-6,3) rectangle (-1,4);
\draw[fill=white] (6,3) rectangle (1,4);
%%%
\node at (0,3) [below]{${\scriptstyle n-2}$};
\node at (-6,-1) [below]{${\scriptstyle 1}$};
\node at (6,-1) [below]{${\scriptstyle 1}$};
}
%%%%%%%%%%
\Big\rangle_3\ .
\end{align*}

We can confirm that the coefficient turns out to be $C_n=(-1)^nq^{-\frac{n^2+3n}{6}}$ and the $A_2$ web in the last term is the same to 
$\Big\langle
%%%%%%%%%%
\tikz[baseline=-.6ex, scale=0.1]{
\draw[-<-=.5] (-5,0) -- (1,0);
\draw[->-=.5, white, double=black, double distance=0.4pt, ultra thick] 
(0,0) to[out=north east, in=west] (3,2) -- (8,2);
\draw[->-=.5, white, double=black, double distance=0.4pt, ultra thick] 
(0,0) to[out=south east, in=west] (3,-2) -- (8,-2);
\draw[fill=white] (-4,-2) rectangle (-3,2);
\draw[fill=white] (6,1) rectangle (7,3);
\draw[fill=white] (6,-1) rectangle (7,-3);
\draw[fill=white] (0,-2) -- (0,2) -- (2,0) --cycle;
\node at (-5,0) [left]{${\scriptstyle n}$};
\node at (3,2) [above]{${\scriptstyle n}$};
\node at (3,-2) [below]{${\scriptstyle n}$};
}
%%%%%%%%%%
\Big\rangle_3$ by definition.
\end{proof}

\section{The $A_2$ colored Kauffman-Vogel polynomial}
In this section, 
we will give definitions of invariants of oriented and unoriented $4$-valent rigid vertex graphs by using clasped $A_2$ webs. 
These invariants are a generalization of Kauffman-Vogel polynomials.
Kauffman and Vogel defined three variables polynomial invariants of $4$-valent rigid vertex graphs. 
A one-variable specialization of the Kauffman-Vogel polynomial was given by using the Kauffman bracket and the Jones-Wenzl idempotents (see Chapter~{4.3} in~\cite{KauffmanLins94}). 
The one variable Kauffman-Vogel polynomial was generalized by Elhamdadi and Hajij~\cite{ElhamdadiHajij17}. 
This polynomial is colored by positive even integers. 
Our invariants for oriented and unoriented $4$-valent rigid vertex graphs are an $A_2$ version of the colored one-variable Kauffman-Vogel polynomials.
\subsection{Invariants of oriented $4$-valent rigid vertex graphs}
Let $G$ be an oriented $4$-valent rigid vertex graph diagram. 
\begin{DEF}\label{oriKVdef}
We define $\left[G\right]_{2n}$ by the following rules:
\begin{enumerate}
\item 
$\left[\tikz[baseline=-.6ex, scale=0.1]{
\draw[->-=.5] (-5,0) -- (5,0);
}\right]_{2n}
=
\Big\langle\tikz[baseline=-.6ex, scale=0.1]{
\draw[->-=.8] (-5,0) -- (5,0);
\draw[fill=white] (-1,-3) rectangle (1,3);
\node at (1,0) [above right]{${\scriptstyle 2n}$};
}\Big\rangle_3
$,
\item 
$
\left[\,\tikz[baseline=-.6ex, scale=.1]{
\draw (-4,4) -- +(-2,0);
\draw[->-=.8, white, double=black, double distance=0.4pt, ultra thick] 
(4,-4) to[out=west, in=east] (-4,4);
\draw (4,-4) -- +(2,0);
\draw (-4,-4) -- +(-2,0);
\draw[->-=.8, white, double=black, double distance=0.4pt, ultra thick] 
(-4,-4) to[out=east, in=west] (4,4);
\draw (4,4) -- +(2,0);
}\,\right]_{2n}
=
\Bigg\langle\,\tikz[baseline=-.6ex, scale=.08]{
\draw (-4,4) -- +(-2,0);
\draw[->-=.8, white, double=black, double distance=0.4pt, ultra thick] 
(4,-4) to[out=west, in=east] (-4,4);
\draw (4,-4) -- +(2,0);
\draw (-4,-4) -- +(-2,0);
\draw[->-=.8, white, double=black, double distance=0.4pt, ultra thick] 
(-4,-4) to[out=east, in=west] (4,4);
\draw (4,4) -- +(2,0);
\draw[fill=white] (4,-6) rectangle +(1,4);
\draw[fill=white] (-4,-6) rectangle +(-1,4);
\draw[fill=white] (4,6) rectangle +(1,-4);
\draw[fill=white] (-4,6) rectangle +(-1,-4);
\node at (4,-6)[right]{$\scriptstyle{2n}$};
\node at (-4,-6)[left]{$\scriptstyle{2n}$};
\node at (4,6)[right]{$\scriptstyle{2n}$};
\node at (-4,6)[left]{$\scriptstyle{2n}$};
}\,\Bigg\rangle_{\! 3}$
and 
$\left[\,\tikz[baseline=-.6ex, scale=.1]{
\draw (-4,4) -- +(-2,0);
\draw[->-=.8, white, double=black, double distance=0.4pt, ultra thick] 
(-4,-4) to[out=east, in=west] (4,4);
\draw[->-=.8, white, double=black, double distance=0.4pt, ultra thick] 
(4,-4) to[out=west, in=east] (-4,4);
\draw (4,-4) -- +(2,0);
\draw (-4,-4) -- +(-2,0);
\draw (4,4) -- +(2,0);
}\,\right]_{2n}
=
\Bigg\langle\,\tikz[baseline=-.6ex, scale=.08]{
\draw (-4,4) -- +(-2,0);
\draw[->-=.8, white, double=black, double distance=0.4pt, ultra thick] 
(-4,-4) to[out=east, in=west] (4,4);
\draw[->-=.8, white, double=black, double distance=0.4pt, ultra thick] 
(4,-4) to[out=west, in=east] (-4,4);
\draw (4,-4) -- +(2,0);
\draw (-4,-4) -- +(-2,0);
\draw (4,4) -- +(2,0);
\draw[fill=white] (4,-6) rectangle +(1,4);
\draw[fill=white] (-4,-6) rectangle +(-1,4);
\draw[fill=white] (4,6) rectangle +(1,-4);
\draw[fill=white] (-4,6) rectangle +(-1,-4);
\node at (4,-6)[right]{$\scriptstyle{2n}$};
\node at (-4,-6)[left]{$\scriptstyle{2n}$};
\node at (4,6)[right]{$\scriptstyle{2n}$};
\node at (-4,6)[left]{$\scriptstyle{2n}$};
}\,\Bigg\rangle_{\! 3}$,
\item 
$
\left[\,\tikz[baseline=-.6ex, scale=.1]{
\draw (-4,4) -- +(-2,0);
\draw (4,-4) -- +(2,0);
\draw (-4,-4) -- +(-2,0);
\draw (4,4) -- +(2,0);
\draw[->-=.8] (0,0) -- (2,3) to[out=north east, in=west] (4,4);
\draw[->-=.8] (0,0) -- (-2,3) to[out=north west, in=east] (-4,4);
\draw[-<-=.8] (0,0) -- (2,-3) to[out=south east, in=west] (4,-4);
\draw[-<-=.8] (0,0) -- (-2,-3) to[out=south west, in=east] (-4,-4);
\draw[fill=cyan] (0,0) circle [radius=.8];
}\,\right]_{2n}
=
\Bigg\langle\,\tikz[baseline=-.6ex, scale=.1]{
\draw 
(-5,4) -- +(-2,0)
(5,-4) -- +(2,0)
(-5,-4) -- +(-2,0)
(5,4) -- +(2,0);
\draw[-<-=.5] (-5,3) to[out=east, in=east] (-5,-3);
\draw[-<-=.5] (5,3) to[out=west, in=west] (5,-3);
\draw[-<-=.5] (-5,5) to[out=east, in=north west] (0,0);
\draw[-<-=.5] (0,0) to[out=south east, in=west] (5,-5);
\draw[-<-=.5] (5,5) to[out=west, in=north east] (0,0);
\draw[-<-=.5] (0,0) to[out=south west, in=east] (-5,-5);
\draw[fill=white] (2,0) -- (0,2) -- (-2,0) -- (0,-2) -- cycle;
\draw (-2,0) -- (2,0);
\draw[fill=white] (5,-6) rectangle +(1,4);
\draw[fill=white] (-5,-6) rectangle +(-1,4);
\draw[fill=white] (5,6) rectangle +(1,-4);
\draw[fill=white] (-5,6) rectangle +(-1,-4);
\node at (5,-6)[right]{$\scriptstyle{2n}$};
\node at (-5,-6)[left]{$\scriptstyle{2n}$};
\node at (5,6)[right]{$\scriptstyle{2n}$};
\node at (-5,6)[left]{$\scriptstyle{2n}$};
\node at (-3,0)[left]{$\scriptstyle{n}$};
\node at (3,0)[right]{$\scriptstyle{n}$};
\node at (0,5)[right]{$\scriptstyle{n}$};
\node at (0,5)[left]{$\scriptstyle{n}$};
\node at (0,-5)[right]{$\scriptstyle{n}$};
\node at (0,-5)[left]{$\scriptstyle{n}$};
}\,\Bigg\rangle_{\! 3}
$
and 
$\left[\,\tikz[baseline=-.6ex, scale=.1]{
\draw (-4,4) -- +(-2,0);
\draw (4,-4) -- +(2,0);
\draw (-4,-4) -- +(-2,0);
\draw (4,4) -- +(2,0);
\draw[-<-=.8] (0,0) -- (2,3) to[out=north east, in=west] (4,4);
\draw[->-=.8] (0,0) -- (-2,3) to[out=north west, in=east] (-4,4);
\draw[->-=.8] (0,0) -- (2,-3) to[out=south east, in=west] (4,-4);
\draw[-<-=.8] (0,0) -- (-2,-3) to[out=south west, in=east] (-4,-4);
\draw[fill=cyan] (0,0) circle [radius=.8];
}\,\right]_{2n}
=
\Bigg\langle\,\tikz[baseline=-.6ex, scale=.08]{
\draw 
(-4,4) -- +(-2,0)
(4,-4) -- +(2,0)
(-4,-4) -- +(-2,0)
(4,4) -- +(2,0);
\draw[-<-=.5] (-4,5) -- (4,5);
\draw[->-=.5] (-4,-5) -- (4,-5);
\draw[-<-=.5] (-4,3) to[out=east, in=east] (-4,-3);
\draw[->-=.5] (4,3) to[out=west, in=west] (4,-3);
\draw[fill=white] (4,-6) rectangle +(1,4);
\draw[fill=white] (-4,-6) rectangle +(-1,4);
\draw[fill=white] (4,6) rectangle +(1,-4);
\draw[fill=white] (-4,6) rectangle +(-1,-4);
\node at (4,-6)[right]{$\scriptstyle{2n}$};
\node at (-4,-6)[left]{$\scriptstyle{2n}$};
\node at (4,6)[right]{$\scriptstyle{2n}$};
\node at (-4,6)[left]{$\scriptstyle{2n}$};
\node at (0,5)[above]{$\scriptstyle{n}$};
\node at (0,-5)[below]{$\scriptstyle{n}$};
\node at (-2,0)[left]{$\scriptstyle{n}$};
\node at (2,0)[right]{$\scriptstyle{n}$};
}\,\Bigg\rangle_{\! 3}$.
\end{enumerate}
\end{DEF}

\begin{THM}\label{oriKVthm}
$\left[G\right]_{2n}$ is invariant under the Reidemeister moves (RI) -- (RV).
\end{THM}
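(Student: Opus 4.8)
The plan is to prove invariance move by move, exploiting the fact that by construction $[G]_{2n}=\langle\widehat{G}\rangle_3$, where $\widehat{G}$ is the clasped $A_2$ web obtained from $G$ by cabling every edge with an $A_2$ clasp of color $2n$, resolving each crossing with the $A_2$ bracket, and inserting the square webs of Definition~\ref{oriKVdef}(3) at the vertices. Since $\langle\cdot\rangle_3$ is $\mathbb{Q}(q^{\frac16})$-linear and, by the remark following Definition~\ref{A2def}, invariant under the Reidemeister moves (R1')--(R4) for tangled trivalent graphs, it suffices to check that each of (RI)--(RV) becomes, after cabling and clasp insertion, an identity of clasped webs that follows from the web-level moves together with the clasp properties recorded in Lemmas~\ref{A2claspprop}--\ref{twistcoeff}.

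First I would dispose of the three ``knot-like'' moves (RI)--(RIII), which involve no vertex. For (RII) and (RIII) the strands carry color $2n$, so I resolve each cabled crossing using Lemma~\ref{twistcoeff}(1): a crossing of two $2n$-clasps equals the $A_2$ clasp of type $(2n,2n)$ up to the scalar $(-1)^{2n}q^{\mp (2n)^2/6}$. Because the color is \emph{even}, the sign $(-1)^{2n}=1$ drops out, and in (RII) the two oppositely signed crossings contribute reciprocal powers $q^{-(2n)^2/6}$ and $q^{+(2n)^2/6}$ that cancel; the remaining composite of two type-$(2n,2n)$ clasps then collapses to a single clasp by the absorption property of Lemma~\ref{A2claspprop}. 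Invariance under (RIII) follows from (RII) together with the web-level move (R3), after Lemma~\ref{A2claspprop} is used to slide the clasps through the triple point. For (RI) the curl on a single $2n$-cable is evaluated directly by Lemma~\ref{A2clasplem}(3); this is precisely the cabled form of the (R1') invariance of $\langle\cdot\rangle_3$ noted after Definition~\ref{A2def}.

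The substance of the theorem lies in the two vertex moves (RIV) and (RV), where the vertex is replaced by the clasped square of Definition~\ref{oriKVdef}(3). For (RIV) I slide a $2n$-cable across the vertex: after resolving the crossings where the cable meets the legs, the framing contributions from passing over (respectively under) the vertex web must cancel in pairs, and the cable must pass cleanly through the square. I would establish this with the sliding and fusion identities of Lemma~\ref{coloredvertex}(3)--(6) to move the cable across the internal clasps, together with Lemma~\ref{A2clasplem}(1) to account for the framing picked up at each leg, the even color again forcing the sign cancellations. For (RV), which rotates the rigid vertex, I must show that the square web of Definition~\ref{oriKVdef}(3) is invariant under the cyclic rotation of its four legs. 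The rotation introduces a twist in each leg, evaluated by Lemma~\ref{twistcoeff} and Lemma~\ref{A2clasplem}(3); the key point is that the symmetric clasped-square construction, together with the annihilation properties of Lemmas~\ref{A2claspprop} and~\ref{doubleA2claspprop}, absorbs these twists and returns the same web.

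I expect (RIV) to be the main obstacle. Unlike the knot-like moves it does not reduce to a single web-level Reidemeister move: one must simultaneously control the four crossing resolutions, the framing scalars they produce, and the way the cable interacts with the two parallel clasps and the central fusion of the vertex web, so the bookkeeping of clasp absorptions through Lemmas~\ref{A2claspprop}, \ref{doubleA2claspprop}, and~\ref{coloredvertex} is the delicate step. Throughout, the choice of an even color $2n$ is exactly what makes $(-1)^{2n}=1$ and the twist scalars cancel, so I would track parities carefully at each crossing resolution.
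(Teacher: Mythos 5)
Your plan inverts where the difficulty of this theorem actually lies, and two of its steps do not go through as stated. The crossing-resolution argument for (RII)--(RIII) misapplies Lemma~\ref{twistcoeff}~(1): that lemma absorbs a cabled crossing at the cost of a scalar only when a joint clasp of type $(2n,2n)$ is already present on one side of the crossing. In the cabled picture of (RII) the two cables are distinct edges carrying only their individual $2n$-clasps, so the crossing is \emph{not} proportional to the type-$(2n,2n)$ clasp; moreover two parallel clasped cables are genuinely different from the type-$(2n,2n)$ clasp (which by Definition~\ref{doubleA2clasp} is a nontrivial sum of turn-back webs), so the ``collapse of two type-$(2n,2n)$ clasps to a single clasp'' you invoke would equate two unequal elements of the web space. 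The observation you are missing---which the paper makes in one sentence---is that after cabling, every instance of (RI)--(RIV) is a regular isotopy of tangled trivalent graph diagrams: the clasps and the vertex webs of Definition~\ref{oriKVdef}~(3) are linear combinations of webs, and a cable slides over or under each term of such a combination using the web moves (R1')--(R4), where (R4) is exactly the move letting a strand pass a trivalent vertex. By the remark following Definition~\ref{A2def}, the $A_2$ bracket is invariant under these moves, so (RI)--(RIV) hold automatically; in particular (RIV), which you single out as ``the main obstacle'' and never complete, requires no computation at all.

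Conversely, the one move that genuinely requires work is (RV), and there your proposal only gestures: ``the symmetric clasped-square construction \dots absorbs these twists and returns the same web'' is precisely the statement to be proved, not an argument. The vertex web is not rotation-symmetric on the nose (its internal type-$(n,n)$ clasp has a preferred axis), and the paper must first prove Lemma~\ref{4valenttwist} and then run an explicit computation, separately for the three inequivalent orientation patterns of the rigid vertex, using Lemma~\ref{A2clasplem}~(1),(3), Lemma~\ref{4valenttwist}, and Lemma~\ref{twistcoeff}~(2), in which the scalars $C_n=(-1)^nq^{-\frac{n^2+3n}{6}}$, $C_n^{-1}$, and $(q^{\pm\frac{n^2}{3}})^2$ cancel in pairs. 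Nothing in your outline produces these identities. A further minor correction: the evenness of the color is needed so that the vertex web of Definition~\ref{oriKVdef}~(3) exists at all (the $2n$-cable must split as $n+n$ across the vertex), not to kill signs---the cancellations above hold for any color, and Remark~\ref{general} notes that for singular links every color $m$ works---so parity is not the mechanism you should be tracking.
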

\begin{LEM}\label{4valenttwist}\ 
\begin{itemize}
\item 
$\Bigg\langle\,
%%%%%%%%%%
\tikz[baseline=-.6ex, scale=0.1]{
\draw (0,0) -- (45:5);
\draw (0,0) -- (135:5);
\draw (0,0) -- (-45:5);
\draw (0,0) -- (-135:5);
%%%%%
\draw[-<-=.8, white, double=black, double distance=0.4pt, ultra thick] 
(45:5) to[out=north east, in=north west] (10,-4);
\draw[->-=.8, white, double=black, double distance=0.4pt, ultra thick] 
(-45:5) to[out=south east, in=south west] (10,4);
%%%%%
\draw[fill=white] (-2,0) -- (0,2) -- (2,0) -- (0,-2) -- cycle;
\draw (-2,0) -- (2,0);
%%%%%
\begin{scope}[rotate around={45:(0,0)}]
\draw[fill=white] (3,-1) rectangle (4,1);
\end{scope}
\begin{scope}[rotate around={135:(0,0)}]
\draw[fill=white] (3,-1) rectangle (4,1);
\end{scope}
\begin{scope}[rotate around={-45:(0,0)}]
\draw[fill=white] (3,-1) rectangle (4,1);
\end{scope}
\begin{scope}[rotate around={-135:(0,0)}]
\draw[fill=white] (3,-1) rectangle (4,1);
\end{scope}
%%%%%
\node at (-2,3) [above]{${\scriptstyle n}$};
\node at (-2,-3) [below]{${\scriptstyle n}$};
\node at (2,3) [above]{${\scriptstyle n}$};
\node at (2,-3) [below]{${\scriptstyle n}$};}
%%%%%%%%%%
\,\Bigg\rangle_{\! 3}
=(-1)^nq^{\frac{n^2+3n}{6}}
\Bigg\langle\,
%%%%%%%%%%
\tikz[baseline=-.6ex, scale=0.1, yshift=-1cm]{
\draw (0,0) -- (45:5);
\draw (0,0) -- (135:5);
\draw (0,0) -- (-45:5);
\draw (0,0) -- (-135:5);
%%%%%
\draw[-<-=.8, white, double=black, double distance=0.4pt, ultra thick] 
(45:5) to[out=north east, in=south east] (-4,10);
\draw[->-=.8, white, double=black, double distance=0.4pt, ultra thick] 
(135:5) to[out=north west, in=south west] (4,10);
%%%%%
\draw[fill=white] (-2,0) -- (0,2) -- (2,0) -- (0,-2) -- cycle;
\draw (0,-2) -- (0,2);
%%%%%
\begin{scope}[rotate around={45:(0,0)}]
\draw[fill=white] (3,-1) rectangle (4,1);
\end{scope}
\begin{scope}[rotate around={135:(0,0)}]
\draw[fill=white] (3,-1) rectangle (4,1);
\end{scope}
\begin{scope}[rotate around={-45:(0,0)}]
\draw[fill=white] (3,-1) rectangle (4,1);
\end{scope}
\begin{scope}[rotate around={-135:(0,0)}]
\draw[fill=white] (3,-1) rectangle (4,1);
\end{scope}
%%%%%
\node at (-2,3) [above]{${\scriptstyle n}$};
\node at (-2,-3) [below]{${\scriptstyle n}$};
\node at (2,3) [above]{${\scriptstyle n}$};
\node at (2,-3) [below]{${\scriptstyle n}$};}
%%%%%%%%%%
\,\Bigg\rangle_{\! 3}
=
\Bigg\langle\,
%%%%%%%%%%
\tikz[baseline=-.6ex, scale=0.1]{
\draw (0,0) -- (45:5);
\draw (0,0) -- (135:5);
\draw (0,0) -- (-45:5);
\draw (0,0) -- (-135:5);
%%%%%
\draw[-<-=.8, white, double=black, double distance=0.4pt, ultra thick] 
(-135:5) to[out=south west, in=south east] (-10,4);
\draw[->-=.8, white, double=black, double distance=0.4pt, ultra thick] 
(135:5) to[out=north west, in=north east] (-10,-4);
%%%%%
\draw[fill=white] (-2,0) -- (0,2) -- (2,0) -- (0,-2) -- cycle;
\draw (-2,0) -- (2,0);
%%%%%
\begin{scope}[rotate around={45:(0,0)}]
\draw[fill=white] (3,-1) rectangle (4,1);
\end{scope}
\begin{scope}[rotate around={135:(0,0)}]
\draw[fill=white] (3,-1) rectangle (4,1);
\end{scope}
\begin{scope}[rotate around={-45:(0,0)}]
\draw[fill=white] (3,-1) rectangle (4,1);
\end{scope}
\begin{scope}[rotate around={-135:(0,0)}]
\draw[fill=white] (3,-1) rectangle (4,1);
\end{scope}
%%%%%
\node at (-2,3) [above]{${\scriptstyle n}$};
\node at (-2,-3) [below]{${\scriptstyle n}$};
\node at (2,3) [above]{${\scriptstyle n}$};
\node at (2,-3) [below]{${\scriptstyle n}$};}
%%%%%%%%%%
\,\Bigg\rangle_{\! 3}
$
\item 
$\Bigg\langle\,
%%%%%%%%%%
\tikz[baseline=-.6ex, scale=0.1]{
\draw (0,0) -- (45:5);
\draw (0,0) -- (135:5);
\draw (0,0) -- (-45:5);
\draw (0,0) -- (-135:5);
%%%%%
\draw[->-=.8, white, double=black, double distance=0.4pt, ultra thick] 
(-45:5) to[out=south east, in=south west] (10,4);
\draw[-<-=.8, white, double=black, double distance=0.4pt, ultra thick] 
(45:5) to[out=north east, in=north west] (10,-4);
%%%%%
\draw[fill=white] (-2,0) -- (0,2) -- (2,0) -- (0,-2) -- cycle;
\draw (-2,0) -- (2,0);
%%%%%
\begin{scope}[rotate around={45:(0,0)}]
\draw[fill=white] (3,-1) rectangle (4,1);
\end{scope}
\begin{scope}[rotate around={135:(0,0)}]
\draw[fill=white] (3,-1) rectangle (4,1);
\end{scope}
\begin{scope}[rotate around={-45:(0,0)}]
\draw[fill=white] (3,-1) rectangle (4,1);
\end{scope}
\begin{scope}[rotate around={-135:(0,0)}]
\draw[fill=white] (3,-1) rectangle (4,1);
\end{scope}
%%%%%
\node at (-2,3) [above]{${\scriptstyle n}$};
\node at (-2,-3) [below]{${\scriptstyle n}$};
\node at (2,3) [above]{${\scriptstyle n}$};
\node at (2,-3) [below]{${\scriptstyle n}$};}
%%%%%%%%%%
\,\Bigg\rangle_{\! 3}
=(-1)^nq^{-\frac{n^2+3n}{6}}
\Bigg\langle\,
%%%%%%%%%%
\tikz[baseline=-.6ex, scale=0.1, yshift=-1cm]{
\draw (0,0) -- (45:5);
\draw (0,0) -- (135:5);
\draw (0,0) -- (-45:5);
\draw (0,0) -- (-135:5);
%%%%%
\draw[->-=.8, white, double=black, double distance=0.4pt, ultra thick] 
(135:5) to[out=north west, in=south west] (4,10);
\draw[-<-=.8, white, double=black, double distance=0.4pt, ultra thick] 
(45:5) to[out=north east, in=south east] (-4,10);
%%%%%
\draw[fill=white] (-2,0) -- (0,2) -- (2,0) -- (0,-2) -- cycle;
\draw (0,-2) -- (0,2);
%%%%%
\begin{scope}[rotate around={45:(0,0)}]
\draw[fill=white] (3,-1) rectangle (4,1);
\end{scope}
\begin{scope}[rotate around={135:(0,0)}]
\draw[fill=white] (3,-1) rectangle (4,1);
\end{scope}
\begin{scope}[rotate around={-45:(0,0)}]
\draw[fill=white] (3,-1) rectangle (4,1);
\end{scope}
\begin{scope}[rotate around={-135:(0,0)}]
\draw[fill=white] (3,-1) rectangle (4,1);
\end{scope}
%%%%%
\node at (-2,3) [above]{${\scriptstyle n}$};
\node at (-2,-3) [below]{${\scriptstyle n}$};
\node at (2,3) [above]{${\scriptstyle n}$};
\node at (2,-3) [below]{${\scriptstyle n}$};}
%%%%%%%%%%
\,\Bigg\rangle_{\! 3}
=
\Bigg\langle\,
%%%%%%%%%%
\tikz[baseline=-.6ex, scale=0.1]{
\draw (0,0) -- (45:5);
\draw (0,0) -- (135:5);
\draw (0,0) -- (-45:5);
\draw (0,0) -- (-135:5);
%%%%%
\draw[->-=.8, white, double=black, double distance=0.4pt, ultra thick] 
(135:5) to[out=north west, in=north east] (-10,-4);
\draw[-<-=.8, white, double=black, double distance=0.4pt, ultra thick] 
(-135:5) to[out=south west, in=south east] (-10,4);
%%%%%
\draw[fill=white] (-2,0) -- (0,2) -- (2,0) -- (0,-2) -- cycle;
\draw (-2,0) -- (2,0);
%%%%%
\begin{scope}[rotate around={45:(0,0)}]
\draw[fill=white] (3,-1) rectangle (4,1);
\end{scope}
\begin{scope}[rotate around={135:(0,0)}]
\draw[fill=white] (3,-1) rectangle (4,1);
\end{scope}
\begin{scope}[rotate around={-45:(0,0)}]
\draw[fill=white] (3,-1) rectangle (4,1);
\end{scope}
\begin{scope}[rotate around={-135:(0,0)}]
\draw[fill=white] (3,-1) rectangle (4,1);
\end{scope}
%%%%%
\node at (-2,3) [above]{${\scriptstyle n}$};
\node at (-2,-3) [below]{${\scriptstyle n}$};
\node at (2,3) [above]{${\scriptstyle n}$};
\node at (2,-3) [below]{${\scriptstyle n}$};}
%%%%%%%%%%
\,\Bigg\rangle_{\! 3}$
\end{itemize}
\end{LEM}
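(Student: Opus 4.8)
The plan is to reduce each of the three diagrams to one and the same uncrossed $A_2$ clasp by means of the trivalent-vertex twist formula. Recall from Definition~\ref{oriKVdef} that the $4$-valent vertex is the diamond clasp, which decomposes into two $(n,n,n)$ trivalent vertices joined by a single $n$-colored internal edge, every external leg carrying an $n$-clasp. In the left-hand and right-hand diagrams of each line this internal edge is horizontal, so that the two right (resp.\ left) legs issue from a \emph{single} trivalent vertex, whereas in the middle diagram it is vertical, so that the two top legs issue from a single trivalent vertex. Consequently, in all three diagrams the crossing is a crossing of the two $n$-colored legs emanating from one trivalent vertex, which is exactly the local configuration governed by Lemma~\ref{twistcoeff}(2).

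First I would dispose of the equality $\langle D_1\rangle_3=\langle D_3\rangle_3$ between the right-crossing and the left-crossing diagrams. These two webs carry the same horizontal internal edge, the same $n$-clasps, and the same over/under crossing data, and they are interchanged by the rotation of $S^2$ through $\pi$ about the center of the clasp. Since the $A_2$ bracket is invariant under this rotation, the two brackets are literally equal, with no coefficient. For the remaining equality $\langle D_1\rangle_3=(-1)^nq^{\frac{n^2+3n}{6}}\langle D_2\rangle_3$ I would apply Lemma~\ref{twistcoeff}(2) to each side: resolving the right crossing of $D_1$ and the top crossing of $D_2$ removes the crossing at the cost of a framing factor $(-1)^nq^{\pm\frac{n^2+3n}{6}}$, the sign of the exponent being dictated by the over/under assignment, and leaves in each case an uncrossed clasp. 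It then remains to identify the two resulting uncrossed clasps, namely the one with a horizontal internal edge and the one with a vertical internal edge. These represent the same $4$-valent vertex: the horizontal and vertical decompositions are interchanged using the recoupling moves of Lemma~\ref{coloredvertex} together with the clasp-absorption and vanishing relations of Lemma~\ref{A2claspprop}, so the two uncrossed webs coincide in $W_\varepsilon$. Assembling the framing factor produced in this step yields the stated coefficient; the second item of the lemma is proved identically from the opposite crossings and produces $q^{-\frac{n^2+3n}{6}}$.

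The step I expect to be the main obstacle is the bookkeeping of the framing exponent and of the sign $(-1)^n$. One must check that the over/under strand assignment inside each diamond matches the correct case ($+$ or $-$) of Lemma~\ref{twistcoeff}(2), and that passing the crossing from the right side to the top --- equivalently, trading the horizontal internal edge for the vertical one --- contributes exactly the single half-twist recorded by $q^{\frac{n^2+3n}{6}}$ rather than an integer multiple of it. Verifying the horizontal/vertical symmetry of the uncrossed clasp, while routine, is precisely where the $A_2$ recoupling identities of Lemmas~\ref{coloredvertex} and \ref{A2claspprop} are actually needed, and it is the combination of that symmetry with the framing factor that forces the asymmetric-looking coefficient $(-1)^nq^{\frac{n^2+3n}{6}}$.
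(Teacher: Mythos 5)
Your proposal is built on a reading of the diamond notation that is backwards, and this single misreading undoes the whole strategy. Write $D_1,D_2,D_3$ for the three webs in the first item of Lemma~\ref{4valenttwist} (crossing on the right pair of legs, on the top pair, on the left pair, respectively). The diamond with an internal horizontal (resp.\ vertical) line is the cabled-crossing notation rotated by $45^\circ$, and when it is expanded into two $(n,n,n)$ trivalent vertices joined by a clasped edge via Lemma~\ref{coloredvertex}(2), that joining edge is \emph{perpendicular} to the internal line, not parallel to it. This is visible in the paper's own computations: in the proof of Theorem~\ref{oriKVthm} the horizontal-line diamond is expanded into a top vertex and a bottom vertex joined by a \emph{vertical} clasped edge, and in the proof of Lemma~\ref{4valenttwist} the vertical-line diamond of $D_2$ is expanded into a left vertex and a right vertex joined by a \emph{horizontal} clasped edge. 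Consequently, in every one of the three diagrams the two crossed legs emanate from \emph{different} trivalent vertices, never from the same one, so Lemma~\ref{twistcoeff}(2) is not directly applicable to any of them. The entire substance of the paper's proof is precisely the step your plan omits: after decomposing the diamond, one trivalent vertex together with the clasped joining edge must be dragged around by Reidemeister moves until the external crossing becomes a crossing of the two legs at a single vertex; only then is Lemma~\ref{twistcoeff}(2) applied, once, producing the single factor $(-1)^nq^{\mp\frac{n^2+3n}{6}}$.

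Even granting your vertex assignment, the scheme cannot yield the stated coefficient. Removing one crossing from $D_1$ and one from $D_2$ gives factors $(-1)^nq^{\epsilon_1\frac{n^2+3n}{6}}$ and $(-1)^nq^{\epsilon_2\frac{n^2+3n}{6}}$ with $\epsilon_i=\pm1$; if the two uncrossed diamonds were then equal, you would get $\langle D_1\rangle_3=q^{(\epsilon_1-\epsilon_2)\frac{n^2+3n}{6}}\langle D_2\rangle_3$, i.e.\ a coefficient $1$ or $q^{\pm\frac{n^2+3n}{3}}$ --- never $(-1)^nq^{\frac{n^2+3n}{6}}$ (wrong sign for odd $n$, wrong exponent for every $n$). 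And in fact the two uncrossed diamonds are not equal: in $D_1$ both cables flow downward through the diamond while in $D_2$ both flow leftward, so after the crossings are removed the two webs have different boundary orientations and do not even lie in a common web space $W_\varepsilon$; no amount of recoupling via Lemmas~\ref{coloredvertex} and \ref{A2claspprop} identifies them. The same confusion invalidates your rotation argument for $\langle D_1\rangle_3=\langle D_3\rangle_3$: the $\pi$-rotation carries $D_1$ to $D_3$ only if the four boundary legs rotate along with it, whereas the lemma asserts equality with the legs attached to the \emph{same} four external points; a rotation that moves boundary points is not a rel-boundary symmetry (otherwise it would equally ``prove'' that a crossing on the right pair of strands of a four-ended tangle equals a crossing on the left pair, which is false). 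Accordingly the paper does not use any symmetry shortcut: it establishes $\langle D_2\rangle_3=(-1)^nq^{-\frac{n^2+3n}{6}}\langle D_3\rangle_3$ by the decompose--isotope--twist computation and notes that the remaining identity is proved by running the same computation again.
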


\begin{proof}
By using the Reidemeister moves for tangled trivalent graph diagrams and Lemma~\ref{twistcoeff}~(2), 
\begin{align*}
\Bigg\langle\,
%%%%%%%%%%
\tikz[baseline=-.6ex, scale=0.1, yshift=-1cm]{
\draw (0,0) -- (45:5);
\draw (0,0) -- (135:5);
\draw (0,0) -- (-45:5);
\draw (0,0) -- (-135:5);
%%%%%
\draw[-<-=.8, white, double=black, double distance=0.4pt, ultra thick] 
(45:5) to[out=north east, in=south east] (-4,10);
\draw[->-=.8, white, double=black, double distance=0.4pt, ultra thick] 
(135:5) to[out=north west, in=south west] (4,10);
%%%%%
\draw[fill=white] (-2,0) -- (0,2) -- (2,0) -- (0,-2) -- cycle;
\draw (0,-2) -- (0,2);
%%%%%
\begin{scope}[rotate around={45:(0,0)}]
\draw[fill=white] (3,-1) rectangle (4,1);
\end{scope}
\begin{scope}[rotate around={135:(0,0)}]
\draw[fill=white] (3,-1) rectangle (4,1);
\end{scope}
\begin{scope}[rotate around={-45:(0,0)}]
\draw[fill=white] (3,-1) rectangle (4,1);
\end{scope}
\begin{scope}[rotate around={-135:(0,0)}]
\draw[fill=white] (3,-1) rectangle (4,1);
\end{scope}
%%%%%
\node at (-2,3) [above]{${\scriptstyle n}$};
\node at (-2,-3) [below]{${\scriptstyle n}$};
\node at (2,3) [above]{${\scriptstyle n}$};
\node at (2,-3) [below]{${\scriptstyle n}$};}
%%%%%%%%%%
\,\Bigg\rangle_{\! 3}
&=\Bigg\langle\,
%%%%%%%%%%
\tikz[baseline=-.6ex, scale=.1]{
%%%%%
\draw (4,9) -- (4,10);
\draw (-4,9) -- (-4,10);
\draw (3,-9) -- (3,-10);
\draw (-3,-9) -- (-3,-10);
%%%%%
\draw[-<-=.4, white, double=black, double distance=0.4pt, ultra thick] 
(2,0) to[out=north east, in=south] (3,3)
to[out=north, in=south] (-4,8);
\draw[->-=.4, white, double=black, double distance=0.4pt, ultra thick] 
(-2,0) to[out=north west, in=south] (-3,3)
to[out=north, in=south] (4,8);
\draw[-<-=.5, white, double=black, double distance=0.4pt, ultra thick] 
(2,0) to[out=south east, in=north] (3,-3) -- (3,-8);
\draw[->-=.5, white, double=black, double distance=0.4pt, ultra thick] 
(-2,0) to[out=south west, in=north] (-3,-3) -- (-3,-8);
\draw (-2,0) -- (2,0);
%%%%%
\draw[fill=white] (4,0) -- (2,2) -- (2,-2) -- cycle; 
\draw[fill=white] (-4,0) -- (-2,2) -- (-2,-2) -- cycle;
%%%%%
\draw[fill=white] (2,8) rectangle (6,9);
\draw[fill=white] (-2,8) rectangle (-6,9);
\draw[fill=white] (1,-8) rectangle (5,-9);
\draw[fill=white] (-1,-8) rectangle (-5,-9);
\draw[fill=white] (-.5,-2) rectangle (.5,2);
%%%%%
\node at (-3,3)[left]{$\scriptstyle{n}$};
\node at (3,3)[right]{$\scriptstyle{n}$};
\node at (-3,-3)[left]{$\scriptstyle{n}$};
\node at (3,-3)[right]{$\scriptstyle{n}$};
}
%%%%%%%%%%
\,\Bigg\rangle_{\! 3}
=\Bigg\langle\,
%%%%%%%%%%
\tikz[baseline=-.6ex, scale=.1]{
%%%%%
\draw (4,9) -- (4,10);
\draw (-4,9) -- (-4,10);
\draw (3,-9) -- (3,-10);
\draw (-3,-9) -- (-3,-10);
%%%%%
\draw[-<-=.5, white, double=black, double distance=0.4pt, ultra thick] 
(2,-2) to[out=north, in=south] (3,0)
to[out=north, in=south] (-4,5) -- (-4,8);
\draw[->-=.8, white, double=black, double distance=0.4pt, ultra thick] 
(3,6) to[out=north east, in=south] (4,8);
\draw[-<-=.5, white, double=black, double distance=0.4pt, ultra thick] 
(2,-2) to[out=south, in=north] (3,-4) -- (3,-8);
\draw[->-=.5, white, double=black, double distance=0.4pt, ultra thick] 
(3,6) to[out=west, in=north] (-3,2) -- (-3,-8);
\draw[-<-=.5, white, double=black, double distance=0.4pt, ultra thick, rounded corners] 
(2,-2) -- (-2,-2) -- (-2,1) -- (3,1) -- (3,6);
%%%%%
\draw[fill=white] (3,-2) -- (1,0) -- (1,-4) -- cycle; 
\draw[fill=white] (3,7) -- (1,5) -- (5,5) -- cycle;
%%%%%
\draw[fill=white] (2,8) rectangle (6,9);
\draw[fill=white] (-2,8) rectangle (-6,9);
\draw[fill=white] (1,-8) rectangle (5,-9);
\draw[fill=white] (-1,-8) rectangle (-5,-9);
\draw[fill=white] (1,3) rectangle (5,4);
}
%%%%%%%%%%
\,\Bigg\rangle_{\! 3}\\
&=(-1)^nq^{-\frac{n^2+3n}{6}}\Bigg\langle\,
%%%%%%%%%%
\tikz[baseline=-.6ex, scale=.1]{
%%%%%
\draw (4,9) -- (4,10);
\draw (-4,9) -- (-4,10);
\draw (3,-9) -- (3,-10);
\draw (-3,-9) -- (-3,-10);
%%%%%
\draw[-<-=.5, white, double=black, double distance=0.4pt, ultra thick, rounded corners] 
(2,-2) -- (-1,-2) -- (-1,3) -- (-4,5) -- (-4,8);
\draw[->-=.8, white, double=black, double distance=0.4pt, ultra thick] 
(3,6) to[out=north east, in=south] (4,8);
\draw[-<-=.5, white, double=black, double distance=0.4pt, ultra thick] 
(2,-2) to[out=south, in=north] (3,-4) -- (3,-8);
\draw[->-=.5, white, double=black, double distance=0.4pt, ultra thick] 
(3,6) to[out=west, in=north] (-3,2) -- (-3,-8);
\draw[-<-=.5, white, double=black, double distance=0.4pt, ultra thick] 
(2,-2) to[out=north east, in=south] (3,3) -- (3,6);
%%%%%
\draw[fill=white] (3,-2) -- (1,0) -- (1,-4) -- cycle; 
\draw[fill=white] (3,7) -- (1,5) -- (5,5) -- cycle;
%%%%%
\draw[fill=white] (2,8) rectangle (6,9);
\draw[fill=white] (-2,8) rectangle (-6,9);
\draw[fill=white] (1,-8) rectangle (5,-9);
\draw[fill=white] (-1,-8) rectangle (-5,-9);
\draw[fill=white] (1,3) rectangle (5,4);
}
%%%%%%%%%%
\,\Bigg\rangle_{\! 3}
=(-1)^nq^{-\frac{n^2+3n}{6}}\Bigg\langle\,
%%%%%%%%%%
\tikz[baseline=-.6ex, scale=0.1]{
\draw (0,0) -- (45:5);
\draw (0,0) -- (135:5);
\draw (0,0) -- (-45:5);
\draw (0,0) -- (-135:5);
%%%%%
\draw[-<-=.8, white, double=black, double distance=0.4pt, ultra thick] 
(-135:5) to[out=south west, in=south east] (-10,4);
\draw[->-=.8, white, double=black, double distance=0.4pt, ultra thick] 
(135:5) to[out=north west, in=north east] (-10,-4);
%%%%%
\draw[fill=white] (-2,0) -- (0,2) -- (2,0) -- (0,-2) -- cycle;
\draw (-2,0) -- (2,0);
%%%%%
\begin{scope}[rotate around={45:(0,0)}]
\draw[fill=white] (3,-1) rectangle (4,1);
\end{scope}
\begin{scope}[rotate around={135:(0,0)}]
\draw[fill=white] (3,-1) rectangle (4,1);
\end{scope}
\begin{scope}[rotate around={-45:(0,0)}]
\draw[fill=white] (3,-1) rectangle (4,1);
\end{scope}
\begin{scope}[rotate around={-135:(0,0)}]
\draw[fill=white] (3,-1) rectangle (4,1);
\end{scope}
%%%%%
\node at (-2,3) [above]{${\scriptstyle n}$};
\node at (-2,-3) [below]{${\scriptstyle n}$};
\node at (2,3) [above]{${\scriptstyle n}$};
\node at (2,-3) [below]{${\scriptstyle n}$};}
%%%%%%%%%%
\,\Bigg\rangle_{\! 3}\ .
\end{align*}
The other identities are also proven in the same way.
\end{proof}

\begin{proof}[Proof of Theorem~\ref{oriKVthm}]
The invariance under (RI) -- (RIV) is guaranteed by the invariance of $A_2$ webs under the Reidemeister moves (R1) -- (R4) for tangled trivalent graph diagrams. 
Thus we show the invariance under the first move of (RV):
\[
\left[\tikz[baseline=-.6ex, scale=0.1]{
\draw[->-=.5] (0,0) -- (45:5);
\draw[->-=.5] (0,0) -- (135:5); 
\draw[-<-=.5] (0,0) -- (-45:5);
\draw[-<-=.5] (0,0) -- (-135:5); 
\draw[fill=cyan] (0,0) circle [radius=.8];}\right]_{2n}
=
\left[\tikz[baseline=-.6ex, scale=0.1]{
\draw[->-=.9, white, double=black, double distance=0.4pt, ultra thick, rounded corners] 
(0,0) -- (45:3) -- (135:5);
\draw[->-=.9, white, double=black, double distance=0.4pt, ultra thick, rounded corners] 
(0,0) -- (135:3) -- (45:5);
\draw[-<-=.9, white, double=black, double distance=0.4pt, ultra thick, rounded corners] 
(0,0) -- (-45:3) -- (-135:5);
\draw[-<-=.9, white, double=black, double distance=0.4pt, ultra thick, rounded corners] 
(0,0) -- (-135:3) -- (-45:5);
\draw[fill=cyan] (0,0) circle [radius=.8];}\right]_{2n},\ 
\left[\tikz[baseline=-.6ex, scale=0.1]{
\draw[-<-=.5] (0,0) -- (45:5);
\draw[->-=.5] (0,0) -- (135:5); 
\draw[-<-=.5] (0,0) -- (-45:5);
\draw[->-=.5] (0,0) -- (-135:5); 
\draw[fill=cyan] (0,0) circle [radius=.8];}\right]_{2n}
=
\left[\tikz[baseline=-.6ex, scale=0.1]{
\draw[->-=.9, white, double=black, double distance=0.4pt, ultra thick, rounded corners] 
(0,0) -- (45:3) -- (135:5);
\draw[-<-=.9, white, double=black, double distance=0.4pt, ultra thick, rounded corners] 
(0,0) -- (135:3) -- (45:5);
\draw[->-=.9, white, double=black, double distance=0.4pt, ultra thick, rounded corners] 
(0,0) -- (-45:3) -- (-135:5);
\draw[-<-=.9, white, double=black, double distance=0.4pt, ultra thick, rounded corners] 
(0,0) -- (-135:3) -- (-45:5);
\draw[fill=cyan] (0,0) circle [radius=.8];}\right]_{2n}, 
\text{\ and\ }
\left[\tikz[baseline=-.6ex, scale=0.1]{
\draw[-<-=.5] (0,0) -- (45:5);
\draw[->-=.5] (0,0) -- (135:5); 
\draw[->-=.5] (0,0) -- (-45:5);
\draw[-<-=.5] (0,0) -- (-135:5); 
\draw[fill=cyan] (0,0) circle [radius=.8];}\right]_{2n}
=
\left[\tikz[baseline=-.6ex, scale=0.1]{
\draw[->-=.9, white, double=black, double distance=0.4pt, ultra thick, rounded corners] 
(0,0) -- (45:3) -- (135:5);
\draw[-<-=.9, white, double=black, double distance=0.4pt, ultra thick, rounded corners] 
(0,0) -- (135:3) -- (45:5);
\draw[-<-=.9, white, double=black, double distance=0.4pt, ultra thick, rounded corners] 
(0,0) -- (-45:3) -- (-135:5);
\draw[->-=.9, white, double=black, double distance=0.4pt, ultra thick, rounded corners] 
(0,0) -- (-135:3) -- (-45:5);
\draw[fill=cyan] (0,0) circle [radius=.8];}\right]_{2n}\ .\]
Other cases can be obtained by changing the orientation of the edges or the over/under information at the crossing points in the above diagrams. 
These cases can be proven in the same way as the proof of the above equations.
Therefore, we only show the above three equations.
Let us denote the first equation of (2) in Lemma~\ref{twistcoeff} by $C_n=(-1)^nq^{-\frac{n^2+3n}{6}}$.
\begin{align*}
%%%%%%%%%%%%%%%%%%%%%%%%%%%%%%%%%%%%%%%%
%%%%%%%%%% the first equation %%%%%%%%%%
%%%%%%%%%%%%%%%%%%%%%%%%%%%%%%%%%%%%%%%%
\left[\tikz[baseline=-.6ex, scale=0.1]{
\draw[->-=.9, white, double=black, double distance=0.4pt, ultra thick, rounded corners] 
(0,0) -- (45:3) -- (135:5);
\draw[->-=.9, white, double=black, double distance=0.4pt, ultra thick, rounded corners] 
(0,0) -- (135:3) -- (45:5);
\draw[-<-=.9, white, double=black, double distance=0.4pt, ultra thick, rounded corners] 
(0,0) -- (-45:3) -- (-135:5);
\draw[-<-=.9, white, double=black, double distance=0.4pt, ultra thick, rounded corners] 
(0,0) -- (-135:3) -- (-45:5);
\draw[fill=cyan] (0,0) circle [radius=.8];}\right]_{2n}
&=
\Bigg\langle\,
%%%%%%%%%%
\tikz[baseline=-.6ex, scale=.1]{
\draw[->-=.2, white, double=black, double distance=0.4pt, ultra thick]
(3.5,5) to[out=north, in=south] (-3.5,10) -- (-3.5,12);
\draw[->-=.2, white, double=black, double distance=0.4pt, ultra thick]
(-3.5,5) to[out=north, in=south] (3.5,10) -- (3.5,12);
\draw[-<-=.2, white, double=black, double distance=0.4pt, ultra thick]
(3.5,-5) to[out=south, in=north] (-3.5,-10) -- (-3.5,-12);
\draw[-<-=.2, white, double=black, double distance=0.4pt, ultra thick]
(-3.5,-5) to[out=south, in=north] (3.5,-10) -- (3.5,-12);
%%%%%
\draw[-<-=.5] (-4,4) -- (-4,-4);
\draw[-<-=.5] (4,4) -- (4,-4);
%%%%%
\draw[->-=.5] (0,0) to[out=north east, in=south] (3,4);
\draw[->-=.5] (0,0) to[out=north west, in=south] (-3,4);
\draw[-<-=.5] (0,0) to[out=south east, in=north] (3,-4);
\draw[-<-=.5] (0,0) to[out=south west, in=north] (-3,-4);
%%%%%
\draw[fill=white] (2,0) -- (0,2) -- (-2,0) -- (0,-2) -- cycle;
\draw (-2,0) -- (2,0);
%%%%%
\draw[fill=white] (2,10) rectangle (5,11);
\draw[fill=white] (-2,10) rectangle (-5,11);
\draw[fill=white] (2,-10) rectangle (5,-11);
\draw[fill=white] (-2,-10) rectangle (-5,-11);
%%%%%
\draw[fill=white] (2,4) rectangle (5,5);
\draw[fill=white] (-2,4) rectangle (-5,5);
\draw[fill=white] (2,-4) rectangle (5,-5);
\draw[fill=white] (-2,-4) rectangle (-5,-5);
%%%%%
\node at (4,-6)[right]{$\scriptstyle{2n}$};
\node at (-4,-6)[left]{$\scriptstyle{2n}$};
\node at (4,6)[right]{$\scriptstyle{2n}$};
\node at (-4,6)[left]{$\scriptstyle{2n}$};
\node at (-4,0)[left]{$\scriptstyle{n}$};
\node at (4,0)[right]{$\scriptstyle{n}$};
}
%%%%%%%%%%
\,\Bigg\rangle_{\! 3}
=\Bigg\langle\,
%%%%%%%%%%
\tikz[baseline=-.6ex, scale=.1]{
\draw (-3.5,10) -- (-3.5,12);
\draw (3.5,10) -- (3.5,12);
\draw (-3.5,-10) -- (-3.5,-12);
\draw (3.5,-10) -- (3.5,-12);
\draw[triple={[line width=1.4pt, white] in [line width=2.2pt, black] in [line width=5.4pt, white]}]
(3.5,5) to[out=north, in=south] (-3.5,10);
\draw[triple={[line width=1.4pt, white] in [line width=2.2pt, black] in [line width=5.4pt, white]}]
(-3.5,5) to[out=north, in=south] (3.5,10);
\draw[triple={[line width=1.4pt, white] in [line width=2.2pt, black] in [line width=5.4pt, white]}]
(3.5,-5) to[out=south, in=north] (-3.5,-10);
\draw[triple={[line width=1.4pt, white] in [line width=2.2pt, black] in [line width=5.4pt, white]}]
(-3.5,-5) to[out=south, in=north] (3.5,-10);
%%%%%
\draw[-<-=.5] (-3.8,5) -- (-3.8,-5);
\draw[-<-=.5] (3.8,5) -- (3.8,-5);
%%%%%
\draw[->-=.5] (0,0) to[out=north east, in=south] (3.2,5);
\draw[->-=.5] (0,0) to[out=north west, in=south] (-3.2,5);
\draw[-<-=.5] (0,0) to[out=south east, in=north] (3.2,-5);
\draw[-<-=.5] (0,0) to[out=south west, in=north] (-3.2,-5);
%%%%%
\draw[fill=white] (2,0) -- (0,2) -- (-2,0) -- (0,-2) -- cycle;
\draw (-2,0) -- (2,0);
%%%%%
\draw[fill=white] (2,10) rectangle (5,11);
\draw[fill=white] (-2,10) rectangle (-5,11);
\draw[fill=white] (2,-10) rectangle (5,-11);
\draw[fill=white] (-2,-10) rectangle (-5,-11);
%%%%%
\node at (-4,0)[left]{$\scriptstyle{n}$};
\node at (4,0)[right]{$\scriptstyle{n}$};
}
%%%%%%%%%%
\,\Bigg\rangle_{\! 3}
=\Bigg\langle\,
%%%%%%%%%%
\tikz[baseline=-.6ex, scale=.1]{
%%%%%
\draw (3,9) -- (3,10);
\draw (-3,9) -- (-3,10);
\draw (3,-9) -- (3,-10);
\draw (-3,-9) -- (-3,-10);
%%%%%
\draw[-<-=.5, white, double=black, double distance=0.4pt, ultra thick] 
(-2,8) to[out=south, in=north]
(-4,4) -- (-4,-4) 
to[out=south, in=north] (-2,-8);
%%%%%
\draw[->-=.4, white, double=black, double distance=0.4pt, ultra thick] 
(0,0) to[out=north east, in=south] (2,3)
to[out=north, in=south] (-4,8);
\draw[->-=.4, white, double=black, double distance=0.4pt, ultra thick] 
(0,0) to[out=north west, in=south] (-2,3)
to[out=north, in=south] (4,8);
\draw[-<-=.4, white, double=black, double distance=0.4pt, ultra thick] 
(0,0) to[out=south east, in=north] (2,-3)
to[out=south, in=north] (-4,-8);
\draw[-<-=.4, white, double=black, double distance=0.4pt, ultra thick] 
(0,0) to[out=south west, in=north] (-2,-3)
to[out=south, in=north] (4,-8);
%%%%%
\draw[-<-=.5, white, double=black, double distance=0.4pt, ultra thick] 
(2,8) to[out=south, in=north]
(4,4) -- (4,-4)
to[out=south, in=north] (2,-8);
%%%%%
\draw[fill=white] (2,0) -- (0,2) -- (-2,0) -- (0,-2) -- cycle;
\draw (-2,0) -- (2,0);
%%%%%
\draw[fill=white] (1,8) rectangle (5,9);
\draw[fill=white] (-1,8) rectangle (-5,9);
\draw[fill=white] (1,-8) rectangle (5,-9);
\draw[fill=white] (-1,-8) rectangle (-5,-9);
%%%%%
\node at (-4,0)[left]{$\scriptstyle{n}$};
\node at (4,0)[right]{$\scriptstyle{n}$};
}
%%%%%%%%%%
\,\Bigg\rangle_{\! 3}
\\ 
&=(q^{\frac{n^2}{3}})^2(q^{-\frac{n^2}{3}})^2\Bigg\langle\,
%%%%%%%%%%
\tikz[baseline=-.6ex, scale=.1]{
%%%%%
\draw (3,9) -- (3,10);
\draw (-3,9) -- (-3,10);
\draw (3,-9) -- (3,-10);
\draw (-3,-9) -- (-3,-10);
%%%%%
\draw[-<-=.5, white, double=black, double distance=0.4pt, ultra thick] 
(-4,8) -- (-4,-8); 
%%%%%
\draw[->-=.4, white, double=black, double distance=0.4pt, ultra thick] 
(0,0) to[out=north east, in=south] (2,3)
to[out=north, in=south] (-2,8);
\draw[->-=.4, white, double=black, double distance=0.4pt, ultra thick] 
(0,0) to[out=north west, in=south] (-2,3)
to[out=north, in=south] (2,8);
\draw[-<-=.4, white, double=black, double distance=0.4pt, ultra thick] 
(0,0) to[out=south east, in=north] (2,-3)
to[out=south, in=north] (-2,-8);
\draw[-<-=.4, white, double=black, double distance=0.4pt, ultra thick] 
(0,0) to[out=south west, in=north] (-2,-3)
to[out=south, in=north] (2,-8);
%%%%%
\draw[-<-=.5, white, double=black, double distance=0.4pt, ultra thick] 
(4,8) -- (4,-8);
%%%%%
\draw[fill=white] (2,0) -- (0,2) -- (-2,0) -- (0,-2) -- cycle;
\draw (-2,0) -- (2,0);
%%%%%
\draw[fill=white] (1,8) rectangle (5,9);
\draw[fill=white] (-1,8) rectangle (-5,9);
\draw[fill=white] (1,-8) rectangle (5,-9);
\draw[fill=white] (-1,-8) rectangle (-5,-9);
%%%%%
\node at (-4,0)[left]{$\scriptstyle{n}$};
\node at (4,0)[right]{$\scriptstyle{n}$};
}
%%%%%%%%%%
\,\Bigg\rangle_{\! 3}
=\Bigg\langle\,
%%%%%%%%%%
\tikz[baseline=-.6ex, scale=.1]{
%%%%%
\draw (3,9) -- (3,10);
\draw (-3,9) -- (-3,10);
\draw (3,-9) -- (3,-10);
\draw (-3,-9) -- (-3,-10);
%%%%%
\draw[-<-=.5, white, double=black, double distance=0.4pt, ultra thick] 
(-4,8) -- (-4,-8); 
\draw[-<-=.5, white, double=black, double distance=0.4pt, ultra thick] 
(4,8) -- (4,-8);
%%%%%
\draw[->-=.4, white, double=black, double distance=0.4pt, ultra thick] 
(0,2) to[out=north east, in=south] (2,5)
to[out=north, in=south] (-2,8);
\draw[->-=.4, white, double=black, double distance=0.4pt, ultra thick] 
(0,2) to[out=north west, in=south] (-2,5)
to[out=north, in=south] (2,8);
\draw[-<-=.4, white, double=black, double distance=0.4pt, ultra thick] 
(0,-2) to[out=south east, in=north] (2,-5)
to[out=south, in=north] (-2,-8);
\draw[-<-=.4, white, double=black, double distance=0.4pt, ultra thick] 
(0,-2) to[out=south west, in=north] (-2,-5)
to[out=south, in=north] (2,-8);
\draw (0,2) -- (0,-2);
%%%%%
\draw[fill=white] (2,2) -- (0,4) -- (-2,2) -- cycle;
\draw[fill=white] (2,-2) -- (0,-4) -- (-2,-2) -- cycle;
\draw (-2,0) -- (2,0);
%%%%%
\draw[fill=white] (1,8) rectangle (5,9);
\draw[fill=white] (-1,8) rectangle (-5,9);
\draw[fill=white] (1,-8) rectangle (5,-9);
\draw[fill=white] (-1,-8) rectangle (-5,-9);
\draw[fill=white] (-2,-.5) rectangle (2,.5);
%%%%%
\node at (-4,0)[left]{$\scriptstyle{n}$};
\node at (4,0)[right]{$\scriptstyle{n}$};
}
%%%%%%%%%%
\,\Bigg\rangle_{\! 3}\\
&=C_nC_n^{-1}\Bigg\langle\,
%%%%%%%%%%
\tikz[baseline=-.6ex, scale=.1]{
%%%%%
\draw (3,6) -- (3,7);
\draw (-3,6) -- (-3,7);
\draw (3,-6) -- (3,-7);
\draw (-3,-6) -- (-3,-7);
%%%%%
\draw[-<-=.5, white, double=black, double distance=0.4pt, ultra thick] 
(-4,5) -- (-4,-5); 
\draw[-<-=.5, white, double=black, double distance=0.4pt, ultra thick] 
(4,5) -- (4,-5);
%%%%%
\draw[->-=.8, white, double=black, double distance=0.4pt, ultra thick] 
(0,2) to[out=north east, in=south] (2,5);
\draw[->-=.8, white, double=black, double distance=0.4pt, ultra thick] 
(0,2) to[out=north west, in=south] (-2,5);
\draw[-<-=.8, white, double=black, double distance=0.4pt, ultra thick] 
(0,-2) to[out=south east, in=north] (2,-5);
\draw[-<-=.8, white, double=black, double distance=0.4pt, ultra thick] 
(0,-2) to[out=south west, in=north] (-2,-5);
\draw (0,2) -- (0,-2);
%%%%%
\draw[fill=white] (2,2) -- (0,4) -- (-2,2) -- cycle;
\draw[fill=white] (2,-2) -- (0,-4) -- (-2,-2) -- cycle;
\draw (-2,0) -- (2,0);
%%%%%
\draw[fill=white] (1,5) rectangle (5,6);
\draw[fill=white] (-1,5) rectangle (-5,6);
\draw[fill=white] (1,-5) rectangle (5,-6);
\draw[fill=white] (-1,-5) rectangle (-5,-6);
\draw[fill=white] (-2,-.5) rectangle (2,.5);
%%%%%
\node at (-4,0)[left]{$\scriptstyle{n}$};
\node at (4,0)[right]{$\scriptstyle{n}$};
}
%%%%%%%%%%
\,\Bigg\rangle_{\! 3}
=\left[
%%%%%%%%%%
\tikz[baseline=-.6ex, scale=0.1]{
\draw[->-=.5] (0,0) -- (45:5);
\draw[->-=.5] (0,0) -- (135:5); 
\draw[-<-=.5] (0,0) -- (-45:5);
\draw[-<-=.5] (0,0) -- (-135:5); 
\draw[fill=cyan] (0,0) circle [radius=.8];}
%%%%%%%%%%
\right]_{2n}.
\end{align*}
We used Lemma~\ref{A2clasplem}~(1) substituting $n$ for $2n$ and $k$ for $n$ in the second line of the above identities.
%%%%%%%%%%%%%%%%%%%%%%%%%%%%%%%%%%%%%%%%
%%%%%%%%% the second equation %%%%%%%%%%
%%%%%%%%%%%%%%%%%%%%%%%%%%%%%%%%%%%%%%%%
\begin{align*}
\left[\tikz[baseline=-.6ex, scale=0.1]{
\draw[->-=.9, white, double=black, double distance=0.4pt, ultra thick, rounded corners] 
(0,0) -- (45:3) -- (135:5);
\draw[-<-=.9, white, double=black, double distance=0.4pt, ultra thick, rounded corners] 
(0,0) -- (135:3) -- (45:5);
\draw[->-=.9, white, double=black, double distance=0.4pt, ultra thick, rounded corners] 
(0,0) -- (-45:3) -- (-135:5);
\draw[-<-=.9, white, double=black, double distance=0.4pt, ultra thick, rounded corners] 
(0,0) -- (-135:3) -- (-45:5);
\draw[fill=cyan] (0,0) circle [radius=.8];}\right]_{2n}
&=
\Bigg\langle\,
%%%%%%%%%%
\tikz[baseline=-.6ex, scale=.1]{
\draw[->-=.2, white, double=black, double distance=0.4pt, ultra thick]
(3.5,5) to[out=north, in=south] (-3.5,10) -- (-3.5,12);
\draw[-<-=.2, white, double=black, double distance=0.4pt, ultra thick]
(-3.5,5) to[out=north, in=south] (3.5,10) -- (3.5,12);
\draw[->-=.2, white, double=black, double distance=0.4pt, ultra thick]
(3.5,-5) to[out=south, in=north] (-3.5,-10) -- (-3.5,-12);
\draw[-<-=.2, white, double=black, double distance=0.4pt, ultra thick]
(-3.5,-5) to[out=south, in=north] (3.5,-10) -- (3.5,-12);
%%%%%
\draw[->-=.5] (-3,5) to[out=south, in=south] (3,5);
\draw[->-=.5] (-3,-5) to[out=north, in=north] (3,-5);
%%%%%
\draw[->-=.5] (0,0) to[out=north east, in=south] (4,4);
\draw[-<-=.5] (0,0) to[out=north west, in=south] (-4,4);
\draw[->-=.5] (0,0) to[out=south east, in=north] (4,-4);
\draw[-<-=.5] (0,0) to[out=south west, in=north] (-4,-4);
%%%%%
\draw[fill=white] (2,0) -- (0,2) -- (-2,0) -- (0,-2) -- cycle;
\draw (0,-2) -- (0,2);
%%%%%
\draw[fill=white] (2,10) rectangle (5,11);
\draw[fill=white] (-2,10) rectangle (-5,11);
\draw[fill=white] (2,-10) rectangle (5,-11);
\draw[fill=white] (-2,-10) rectangle (-5,-11);
%%%%%
\draw[fill=white] (2,4) rectangle (5,5);
\draw[fill=white] (-2,4) rectangle (-5,5);
\draw[fill=white] (2,-4) rectangle (5,-5);
\draw[fill=white] (-2,-4) rectangle (-5,-5);
%%%%%
\node at (4,-6)[right]{$\scriptstyle{2n}$};
\node at (-4,-6)[left]{$\scriptstyle{2n}$};
\node at (4,6)[right]{$\scriptstyle{2n}$};
\node at (-4,6)[left]{$\scriptstyle{2n}$};
\node at (-2.5,2)[left]{$\scriptstyle{n}$};
\node at (2.5,2)[right]{$\scriptstyle{n}$};
\node at (-2.5,-2)[left]{$\scriptstyle{n}$};
\node at (2.5,-2)[right]{$\scriptstyle{n}$};
}
%%%%%%%%%%
\,\Bigg\rangle_{\! 3}
=\Bigg\langle\,
%%%%%%%%%%
\tikz[baseline=-.6ex, scale=.1]{
\draw (-3.5,10) -- (-3.5,12);
\draw (3.5,10) -- (3.5,12);
\draw (-3.5,-10) -- (-3.5,-12);
\draw (3.5,-10) -- (3.5,-12);
\draw[triple={[line width=1.4pt, white] in [line width=2.2pt, black] in [line width=5.4pt, white]}]
(3.5,5) to[out=north, in=south] (-3.5,10);
\draw[triple={[line width=1.4pt, white] in [line width=2.2pt, black] in [line width=5.4pt, white]}]
(-3.5,5) to[out=north, in=south] (3.5,10);
\draw[triple={[line width=1.4pt, white] in [line width=2.2pt, black] in [line width=5.4pt, white]}]
(3.5,-5) to[out=south, in=north] (-3.5,-10);
\draw[triple={[line width=1.4pt, white] in [line width=2.2pt, black] in [line width=5.4pt, white]}]
(-3.5,-5) to[out=south, in=north] (3.5,-10);
%%%%%
\draw[->-=.5] (-3.2,5) to[out=south, in=south] (3.2,5);
\draw[->-=.5] (-3.2,-5) to[out=north, in=north] (3.2,-5);
%%%%%
\draw[->-=.5] (0,0) to[out=north east, in=south] (3.8,5);
\draw[-<-=.5] (0,0) to[out=north west, in=south] (-3.8,5);
\draw[->-=.5] (0,0) to[out=south east, in=north] (3.8,-5);
\draw[-<-=.5] (0,0) to[out=south west, in=north] (-3.8,-5);
%%%%%
\draw[fill=white] (2,0) -- (0,2) -- (-2,0) -- (0,-2) -- cycle;
\draw (0,-2) -- (0,2);
%%%%%
\draw[fill=white] (2,10) rectangle (5,11);
\draw[fill=white] (-2,10) rectangle (-5,11);
\draw[fill=white] (2,-10) rectangle (5,-11);
\draw[fill=white] (-2,-10) rectangle (-5,-11);
%%%%%
\node at (-2,2)[left]{$\scriptstyle{n}$};
\node at (2,2)[right]{$\scriptstyle{n}$};
\node at (-2,-2)[left]{$\scriptstyle{n}$};
\node at (2,-2)[right]{$\scriptstyle{n}$};
}
%%%%%%%%%%
\,\Bigg\rangle_{\! 3}\\
&=q^{\frac{n^2+3n}{3}}q^{-\frac{n^2+3n}{3}}\Bigg\langle\,
%%%%%%%%%%
\tikz[baseline=-.6ex, scale=.1]{
%%%%%
\draw (3,9) -- (3,10);
\draw (-3,9) -- (-3,10);
\draw (3,-9) -- (3,-10);
\draw (-3,-9) -- (-3,-10);
%%%%%
\draw[white, double=black, double distance=0.4pt, ultra thick] 
(0,7) to[out=east, in=south] (4,8);
\draw[white, double=black, double distance=0.4pt, ultra thick] 
(0,-7) to[out=east, in=north] (4,-8);
%%%%%
\draw[->-=.4, white, double=black, double distance=0.4pt, ultra thick] 
(0,0) to[out=north east, in=south] (2,3)
to[out=north, in=south] (-2,8);
\draw[-<-=.4, white, double=black, double distance=0.4pt, ultra thick] 
(0,0) to[out=north west, in=south] (-2,3)
to[out=north, in=south] (2,8);
\draw[->-=.4, white, double=black, double distance=0.4pt, ultra thick] 
(0,0) to[out=south east, in=north] (2,-3)
to[out=south, in=north] (-2,-8);
\draw[-<-=.4, white, double=black, double distance=0.4pt, ultra thick] 
(0,0) to[out=south west, in=north] (-2,-3)
to[out=south, in=north] (2,-8);
%%%%%
\draw[-<-=1, white, double=black, double distance=0.4pt, ultra thick] 
(-4,8) to[out=south, in=west] (0,7);
\draw[-<-=1, white, double=black, double distance=0.4pt, ultra thick] 
(-4,-8) to[out=north, in=west] (0,-7);
%%%%%
\draw[fill=white] (2,0) -- (0,2) -- (-2,0) -- (0,-2) -- cycle;
\draw (0,-2) -- (0,2);
%%%%%
\draw[fill=white] (1,8) rectangle (5,9);
\draw[fill=white] (-1,8) rectangle (-5,9);
\draw[fill=white] (1,-8) rectangle (5,-9);
\draw[fill=white] (-1,-8) rectangle (-5,-9);
%%%%%
\node at (-2,2)[left]{$\scriptstyle{n}$};
\node at (2,2)[right]{$\scriptstyle{n}$};
\node at (-2,-2)[left]{$\scriptstyle{n}$};
\node at (2,-2)[right]{$\scriptstyle{n}$};
}
%%%%%%%%%%
\,\Bigg\rangle_{\! 3}
=(q^{\frac{n^2}{3}})^2(q^{-\frac{n^2}{3}})^2\Bigg\langle\,
%%%%%%%%%%
\tikz[baseline=-.6ex, scale=.1]{
%%%%%
\draw (3,9) -- (3,10);
\draw (-3,9) -- (-3,10);
\draw (3,-9) -- (3,-10);
\draw (-3,-9) -- (-3,-10);
%%%%%
\draw[->-=.4, white, double=black, double distance=0.4pt, ultra thick] 
(0,0) to[out=north east, in=south] (2,3)
to[out=north, in=south] (-4,8);
\draw[-<-=.4, white, double=black, double distance=0.4pt, ultra thick] 
(0,0) to[out=north west, in=south] (-2,3)
to[out=north, in=south] (4,8);
\draw[->-=.4, white, double=black, double distance=0.4pt, ultra thick] 
(0,0) to[out=south east, in=north] (2,-3)
to[out=south, in=north] (-4,-8);
\draw[-<-=.4, white, double=black, double distance=0.4pt, ultra thick] 
(0,0) to[out=south west, in=north] (-2,-3)
to[out=south, in=north] (4,-8);
%%%%%
\draw[-<-=.5, white, double=black, double distance=0.4pt, ultra thick] 
(-2,8) to[out=south, in=south] (2,8);
\draw[-<-=.5, white, double=black, double distance=0.4pt, ultra thick] 
(-2,-8) to[out=north, in=north] (2,-8);
%%%%%
\draw[fill=white] (2,0) -- (0,2) -- (-2,0) -- (0,-2) -- cycle;
\draw (0,-2) -- (0,2);
%%%%%
\draw[fill=white] (1,8) rectangle (5,9);
\draw[fill=white] (-1,8) rectangle (-5,9);
\draw[fill=white] (1,-8) rectangle (5,-9);
\draw[fill=white] (-1,-8) rectangle (-5,-9);
%%%%%
\node at (-2,2)[left]{$\scriptstyle{n}$};
\node at (2,2)[right]{$\scriptstyle{n}$};
\node at (-2,-2)[left]{$\scriptstyle{n}$};
\node at (2,-2)[right]{$\scriptstyle{n}$};
}
%%%%%%%%%%
\,\Bigg\rangle_{\! 3}\\
&=\Bigg\langle\,
%%%%%%%%%%
\tikz[baseline=-.6ex, scale=.1]{
%%%%%
\draw (3,9) -- (3,10);
\draw (-3,9) -- (-3,10);
\draw (3,-9) -- (3,-10);
\draw (-3,-9) -- (-3,-10);
%%%%%
\draw[-<-=.5, white, double=black, double distance=0.4pt, ultra thick] 
(0,3) to[out=north east, in=south] (4,8);
\draw[->-=.5, white, double=black, double distance=0.4pt, ultra thick] 
(0,3) to[out=north west, in=south] (-4,8);
\draw[->-=.5, white, double=black, double distance=0.4pt, ultra thick] 
(-1,3) to[out=south west, in=north west] (0,-1) 
to[out=south east, in=north] (2,-3)
to[out=south, in=north] (-4,-8);
\draw[-<-=.5, white, double=black, double distance=0.4pt, ultra thick] 
(1,3) to[out=south east, in=north east] (0,-1) 
to[out=south west, in=north] (-2,-3)
to[out=south, in=north] (4,-8);
%%%%%
\draw[-<-=.5, white, double=black, double distance=0.4pt, ultra thick] 
(-2,8) to[out=south, in=south] (2,8);
\draw[-<-=.5, white, double=black, double distance=0.4pt, ultra thick] 
(-2,-8) to[out=north, in=north] (2,-8);
%%%%%
\draw[fill=white] (2,3) -- (0,5) -- (-2,3) -- (0,1) -- cycle;
\draw (0,1) -- (0,5);
%%%%%
\draw[fill=white] (1,8) rectangle (5,9);
\draw[fill=white] (-1,8) rectangle (-5,9);
\draw[fill=white] (1,-8) rectangle (5,-9);
\draw[fill=white] (-1,-8) rectangle (-5,-9);
%%%%%
\node at (-2,4)[left]{$\scriptstyle{n}$};
\node at (2,4)[right]{$\scriptstyle{n}$};
\node at (-2,-2)[left]{$\scriptstyle{n}$};
\node at (2,-2)[right]{$\scriptstyle{n}$};
}
%%%%%%%%%%
\,\Bigg\rangle_{\! 3}
=\Bigg\langle\,
%%%%%%%%%%
\tikz[baseline=-.6ex, scale=.1]{
%%%%%
\draw (3.5,5) -- (3.5,6);
\draw (-3.5,5) -- (-3.5,6);
\draw (3.5,-5) -- (3.5,-6);
\draw (-3.5,-5) -- (-3.5,-6);
%%%%%
\draw[-<-=.5] (-3,5) to[out=south, in=south] (3,5);
\draw[-<-=.5] (-3,-5) to[out=north, in=north] (3,-5);
%%%%%
\draw[-<-=.5] (0,0) to[out=north east, in=south] (4,4);
\draw[->-=.5] (0,0) to[out=north west, in=south] (-4,4);
\draw[-<-=.5] (0,0) to[out=south east, in=north] (4,-4);
\draw[->-=.5] (0,0) to[out=south west, in=north] (-4,-4);
%%%%%
\draw[fill=white] (2,0) -- (0,2) -- (-2,0) -- (0,-2) -- cycle;
\draw (0,-2) -- (0,2);
%%%%%
\draw[fill=white] (2,4) rectangle (5,5);
\draw[fill=white] (-2,4) rectangle (-5,5);
\draw[fill=white] (2,-4) rectangle (5,-5);
\draw[fill=white] (-2,-4) rectangle (-5,-5);
%%%%%
\node at (4,-6)[right]{$\scriptstyle{2n}$};
\node at (-4,-6)[left]{$\scriptstyle{2n}$};
\node at (4,6)[right]{$\scriptstyle{2n}$};
\node at (-4,6)[left]{$\scriptstyle{2n}$};
\node at (-2.5,2)[left]{$\scriptstyle{n}$};
\node at (2.5,2)[right]{$\scriptstyle{n}$};
\node at (-2.5,-2)[left]{$\scriptstyle{n}$};
\node at (2.5,-2)[right]{$\scriptstyle{n}$};
}
%%%%%%%%%%
\,\Bigg\rangle_{\! 3}
=\left[
%%%%%%%%%%
\tikz[baseline=-.6ex, scale=0.1]{
\draw[-<-=.5] (0,0) -- (45:5);
\draw[->-=.5] (0,0) -- (135:5); 
\draw[-<-=.5] (0,0) -- (-45:5);
\draw[->-=.5] (0,0) -- (-135:5); 
\draw[fill=cyan] (0,0) circle [radius=.8];}
%%%%%%%%%%
\right]_{2n}.
\end{align*}
We used Lemma~\ref{A2clasplem} (1), (3) and Lemma~\ref{4valenttwist} in the second line. 
%%%%%%%%%%%%%%%%%%%%%%%%%%%%%%%%%%%%%%%%
%%%%%%%%% the third equation %%%%%%%%%%
%%%%%%%%%%%%%%%%%%%%%%%%%%%%%%%%%%%%%%%%
\begin{align*}
\left[\tikz[baseline=-.6ex, scale=0.1]{
\draw[->-=.9, white, double=black, double distance=0.4pt, ultra thick, rounded corners] 
(0,0) -- (45:3) -- (135:5);
\draw[-<-=.9, white, double=black, double distance=0.4pt, ultra thick, rounded corners] 
(0,0) -- (135:3) -- (45:5);
\draw[-<-=.9, white, double=black, double distance=0.4pt, ultra thick, rounded corners] 
(0,0) -- (-45:3) -- (-135:5);
\draw[->-=.9, white, double=black, double distance=0.4pt, ultra thick, rounded corners] 
(0,0) -- (-135:3) -- (-45:5);
\draw[fill=cyan] (0,0) circle [radius=.8];}\right]_{2n}
&=
\Bigg\langle\,
%%%%%%%%%%
\tikz[baseline=-.6ex, scale=.1]{
\draw[->-=.2, white, double=black, double distance=0.4pt, ultra thick]
(3.5,5) to[out=north, in=south] (-3.5,10) -- (-3.5,12);
\draw[-<-=.2, white, double=black, double distance=0.4pt, ultra thick]
(-3.5,5) to[out=north, in=south] (3.5,10) -- (3.5,12);
\draw[-<-=.2, white, double=black, double distance=0.4pt, ultra thick]
(3.5,-5) to[out=south, in=north] (-3.5,-10) -- (-3.5,-12);
\draw[->-=.2, white, double=black, double distance=0.4pt, ultra thick]
(-3.5,-5) to[out=south, in=north] (3.5,-10) -- (3.5,-12);
%%%%%
\draw[->-=.5] (-3,5) to[out=south, in=south] (3,5);
\draw[-<-=.5] (-3,-5) to[out=north, in=north] (3,-5);
\draw[->-=.5] (4,-5) -- (4,5);
\draw[-<-=.5] (-4,-5) -- (-4,5);
%%%%%
\draw[fill=white] (2,10) rectangle (5,11);
\draw[fill=white] (-2,10) rectangle (-5,11);
\draw[fill=white] (2,-10) rectangle (5,-11);
\draw[fill=white] (-2,-10) rectangle (-5,-11);
%%%%%
\draw[fill=white] (2,4) rectangle (5,5);
\draw[fill=white] (-2,4) rectangle (-5,5);
\draw[fill=white] (2,-4) rectangle (5,-5);
\draw[fill=white] (-2,-4) rectangle (-5,-5);
%%%%%
\node at (4,-6)[right]{$\scriptstyle{2n}$};
\node at (-4,-6)[left]{$\scriptstyle{2n}$};
\node at (4,6)[right]{$\scriptstyle{2n}$};
\node at (-4,6)[left]{$\scriptstyle{2n}$};
\node at (-4,0)[left]{$\scriptstyle{n}$};
\node at (4,0)[right]{$\scriptstyle{n}$};
\node at (0,3)[below]{$\scriptstyle{n}$};
\node at (0,-3)[above]{$\scriptstyle{n}$};
}
%%%%%%%%%%
\,\Bigg\rangle_{\! 3}
=\Bigg\langle\,
%%%%%%%%%%
\tikz[baseline=-.6ex, scale=.1]{
\draw (-3.5,10) -- (-3.5,12);
\draw (3.5,10) -- (3.5,12);
\draw (-3.5,-10) -- (-3.5,-12);
\draw (3.5,-10) -- (3.5,-12);
\draw[triple={[line width=1.4pt, white] in [line width=2.2pt, black] in [line width=5.4pt, white]}]
(3.5,5) to[out=north, in=south] (-3.5,10);
\draw[triple={[line width=1.4pt, white] in [line width=2.2pt, black] in [line width=5.4pt, white]}]
(-3.5,5) to[out=north, in=south] (3.5,10);
\draw[triple={[line width=1.4pt, white] in [line width=2.2pt, black] in [line width=5.4pt, white]}]
(3.5,-5) to[out=south, in=north] (-3.5,-10);
\draw[triple={[line width=1.4pt, white] in [line width=2.2pt, black] in [line width=5.4pt, white]}]
(-3.5,-5) to[out=south, in=north] (3.5,-10);
%%%%%
\draw[->-=.5] (-3.2,5) to[out=south, in=south] (3.2,5);
\draw[-<-=.5] (-3.2,-5) to[out=north, in=north] (3.2,-5);
%%%%%
\draw[->-=.5] (3.8,-5) -- (3.8,5);
\draw[-<-=.5] (-3.8,-5) -- (-3.8,5);
%%%%%
\draw[fill=white] (2,10) rectangle (5,11);
\draw[fill=white] (-2,10) rectangle (-5,11);
\draw[fill=white] (2,-10) rectangle (5,-11);
\draw[fill=white] (-2,-10) rectangle (-5,-11);
%%%%%
\node at (-4,0)[left]{$\scriptstyle{n}$};
\node at (4,0)[right]{$\scriptstyle{n}$};
\node at (0,3)[below]{$\scriptstyle{n}$};
\node at (0,-3)[above]{$\scriptstyle{n}$};
}
%%%%%%%%%%
\,\Bigg\rangle_{\! 3}\\
&=q^{\frac{n^2+3n}{3}}q^{-\frac{n^2+3n}{3}}\Bigg\langle\,
%%%%%%%%%%
\tikz[baseline=-.6ex, scale=.1]{
%%%%%
\draw (3,9) -- (3,10);
\draw (-3,9) -- (-3,10);
\draw (3,-9) -- (3,-10);
\draw (-3,-9) -- (-3,-10);
%%%%%
\draw[white, double=black, double distance=0.4pt, ultra thick] 
(0,7) to[out=east, in=south] (4,8);
\draw[white, double=black, double distance=0.4pt, ultra thick] 
(0,-7) to[out=east, in=north] (4,-8);
%%%%%
\draw[->-=.1, white, double=black, double distance=0.4pt, ultra thick] 
(2,0) -- (2,3) to[out=north, in=south] (-2,8);
\draw[-<-=.1, white, double=black, double distance=0.4pt, ultra thick] 
(-2,0) -- (-2,3) to[out=north, in=south] (2,8);
\draw[white, double=black, double distance=0.4pt, ultra thick] 
(2,0) -- (2,-3) to[out=south, in=north] (-2,-8);
\draw[white, double=black, double distance=0.4pt, ultra thick] 
(-2,0) -- (-2,-3) to[out=south, in=north] (2,-8);
%%%%%
\draw[-<-=1, white, double=black, double distance=0.4pt, ultra thick] 
(-4,8) to[out=south, in=west] (0,7);
\draw[->-=1, white, double=black, double distance=0.4pt, ultra thick] 
(-4,-8) to[out=north, in=west] (0,-7);
%%%%%
\draw[fill=white] (1,8) rectangle (5,9);
\draw[fill=white] (-1,8) rectangle (-5,9);
\draw[fill=white] (1,-8) rectangle (5,-9);
\draw[fill=white] (-1,-8) rectangle (-5,-9);
%%%%%
\node at (-2,0)[left]{$\scriptstyle{n}$};
\node at (2,0)[right]{$\scriptstyle{n}$};
}
%%%%%%%%%%
\,\Bigg\rangle_{\! 3}
=\Bigg\langle\,
%%%%%%%%%%
\tikz[baseline=-.6ex, scale=.1]{
%%%%%
\draw (3,9) -- (3,10);
\draw (-3,9) -- (-3,10);
\draw (3,-9) -- (3,-10);
\draw (-3,-9) -- (-3,-10);
%%%%%
\draw[white, double=black, double distance=0.4pt, ultra thick] 
(0,7) to[out=east, in=south] (4,8);
\draw[white, double=black, double distance=0.4pt, ultra thick] 
(0,-7) to[out=east, in=north] (4,-8);
%%%%%
\draw[->-=.5, white, double=black, double distance=0.4pt, ultra thick] 
(-2,-8) -- (-2,8);
\draw[-<-=.5, white, double=black, double distance=0.4pt, ultra thick] 
(2,-8) -- (2,8);
%%%%%
\draw[-<-=1, white, double=black, double distance=0.4pt, ultra thick] 
(-4,8) to[out=south, in=west] (0,7);
\draw[->-=1, white, double=black, double distance=0.4pt, ultra thick] 
(-4,-8) to[out=north, in=west] (0,-7);
%%%%%
\draw[fill=white] (1,8) rectangle (5,9);
\draw[fill=white] (-1,8) rectangle (-5,9);
\draw[fill=white] (1,-8) rectangle (5,-9);
\draw[fill=white] (-1,-8) rectangle (-5,-9);
%%%%%
\node at (-2,0)[left]{$\scriptstyle{n}$};
\node at (2,0)[right]{$\scriptstyle{n}$};
}
%%%%%%%%%%
\,\Bigg\rangle_{\! 3}\\
&=(q^{\frac{n^2}{3}})^2(q^{-\frac{n^2}{3}})^2\Bigg\langle\,
%%%%%%%%%%
\tikz[baseline=-.6ex, scale=.1]{
%%%%%
\draw (3,6) -- (3,7);
\draw (-3,6) -- (-3,7);
\draw (3,-6) -- (3,-7);
\draw (-3,-6) -- (-3,-7);
%%%%%
\draw[-<-=.5] (4,-5) -- (4,5);
\draw[->-=.5] (-4,-5) -- (-4,5);
%%%%%
\draw[-<-=.5, white, double=black, double distance=0.4pt, ultra thick] 
(-2,5) to[out=south, in=south] (2,5);
\draw[->-=.5, white, double=black, double distance=0.4pt, ultra thick] 
(-2,-5) to[out=north, in=north] (2,-5);
%%%%%
\draw[fill=white] (1,5) rectangle (5,6);
\draw[fill=white] (-1,5) rectangle (-5,6);
\draw[fill=white] (1,-5) rectangle (5,-6);
\draw[fill=white] (-1,-5) rectangle (-5,-6);
%%%%%
\node at (-4,0)[left]{$\scriptstyle{n}$};
\node at (4,0)[right]{$\scriptstyle{n}$};
\node at (0,4)[below]{$\scriptstyle{n}$};
\node at (0,-4)[above]{$\scriptstyle{n}$};
}
%%%%%%%%%%
\,\Bigg\rangle_{\! 3}
=\left[\tikz[baseline=-.6ex, scale=0.1]{
\draw[-<-=.5] (0,0) -- (45:5);
\draw[->-=.5] (0,0) -- (135:5); 
\draw[->-=.5] (0,0) -- (-45:5);
\draw[-<-=.5] (0,0) -- (-135:5); 
\draw[fill=cyan] (0,0) circle [radius=.8];}\right]_{2n}\ .
\end{align*}
\end{proof}

\begin{RMK}\label{general}\ 
\begin{itemize}
\item 
If $G$ is a singular link, 
that is, 
all $4$-valent vertices of $G$ are 
$
%%%%%%%%%%
\tikz[baseline=-.6ex, scale=.8]{
\draw [thin, dashed, fill=white] (0,0) circle [radius=.5];
\draw[-<-=.5] (45:.5) -- (0:0);
\draw[-<-=.5] (135:.5) -- (0:0);
\draw[->-=.5] (-45:.5) -- (0:0);
\draw[->-=.5] (-135:.5) -- (0:0);
\draw[fill=cyan] (0,0) circle [radius=.08];}
%%%%%%%%%%
$, 
then the coloring of the invariant need not be even. 
This means that we can define $\left[G\right]_m$ for all positive integers $m$ if $G$ is a singular link.
This invariant is considered the $\mathfrak{sl}_3$ colored Jones polynomials for singular links.
\item
We can also define $\left[G\right]_{2n}^{(k)}$ by replacing Definition~\ref{oriKVdef}~(3) with 
\begin{itemize}
\item[(3-k)] 
$
\left[\,\tikz[baseline=-.6ex, scale=.1]{
\draw (-4,4) -- +(-2,0);
\draw (4,-4) -- +(2,0);
\draw (-4,-4) -- +(-2,0);
\draw (4,4) -- +(2,0);
\draw[->-=.8] (0,0) -- (2,3) to[out=north east, in=west] (4,4);
\draw[->-=.8] (0,0) -- (-2,3) to[out=north west, in=east] (-4,4);
\draw[-<-=.8] (0,0) -- (2,-3) to[out=south east, in=west] (4,-4);
\draw[-<-=.8] (0,0) -- (-2,-3) to[out=south west, in=east] (-4,-4);
\draw[fill=cyan] (0,0) circle [radius=.8];
}\,\right]_{2n}
=
\Bigg\langle\,\tikz[baseline=-.6ex, scale=.1]{
\draw 
(-5,4) -- +(-2,0)
(5,-4) -- +(2,0)
(-5,-4) -- +(-2,0)
(5,4) -- +(2,0);
\draw[-<-=.5] (-5,3) to[out=east, in=east] (-5,-3);
\draw[-<-=.5] (5,3) to[out=west, in=west] (5,-3);
\draw[-<-=.5] (-5,5) to[out=east, in=north west] (0,0);
\draw[-<-=.5] (0,0) to[out=south east, in=west] (5,-5);
\draw[-<-=.5] (5,5) to[out=west, in=north east] (0,0);
\draw[-<-=.5] (0,0) to[out=south west, in=east] (-5,-5);
\draw[fill=white] (2,0) -- (0,2) -- (-2,0) -- (0,-2) -- cycle;
\draw (-2,0) -- (2,0);
\draw[fill=white] (5,-6) rectangle +(1,4);
\draw[fill=white] (-5,-6) rectangle +(-1,4);
\draw[fill=white] (5,6) rectangle +(1,-4);
\draw[fill=white] (-5,6) rectangle +(-1,-4);
\node at (5,-6)[right]{$\scriptstyle{2n}$};
\node at (-5,-6)[left]{$\scriptstyle{2n}$};
\node at (5,6)[right]{$\scriptstyle{2n}$};
\node at (-5,6)[left]{$\scriptstyle{2n}$};
\node at (-3,0)[left]{$\scriptstyle{2n-k}$};
\node at (3,0)[right]{$\scriptstyle{2n-k}$};
\node at (0,5)[right]{$\scriptstyle{k}$};
\node at (0,5)[left]{$\scriptstyle{k}$};
\node at (0,-5)[right]{$\scriptstyle{k}$};
\node at (0,-5)[left]{$\scriptstyle{k}$};
}\,\Bigg\rangle_{\! 3}
$
and 
$\left[\,\tikz[baseline=-.6ex, scale=.1]{
\draw (-4,4) -- +(-2,0);
\draw (4,-4) -- +(2,0);
\draw (-4,-4) -- +(-2,0);
\draw (4,4) -- +(2,0);
\draw[-<-=.8] (0,0) -- (2,3) to[out=north east, in=west] (4,4);
\draw[->-=.8] (0,0) -- (-2,3) to[out=north west, in=east] (-4,4);
\draw[->-=.8] (0,0) -- (2,-3) to[out=south east, in=west] (4,-4);
\draw[-<-=.8] (0,0) -- (-2,-3) to[out=south west, in=east] (-4,-4);
\draw[fill=cyan] (0,0) circle [radius=.8];
}\,\right]_{2n}
=
\Bigg\langle\,\tikz[baseline=-.6ex, scale=.08]{
\draw 
(-4,4) -- +(-2,0)
(4,-4) -- +(2,0)
(-4,-4) -- +(-2,0)
(4,4) -- +(2,0);
\draw[-<-=.5] (-4,5) -- (4,5);
\draw[->-=.5] (-4,-5) -- (4,-5);
\draw[-<-=.5] (-4,3) to[out=east, in=east] (-4,-3);
\draw[->-=.5] (4,3) to[out=west, in=west] (4,-3);
\draw[fill=white] (4,-6) rectangle +(1,4);
\draw[fill=white] (-4,-6) rectangle +(-1,4);
\draw[fill=white] (4,6) rectangle +(1,-4);
\draw[fill=white] (-4,6) rectangle +(-1,-4);
\node at (4,-6)[right]{$\scriptstyle{2n}$};
\node at (-4,-6)[left]{$\scriptstyle{2n}$};
\node at (4,6)[right]{$\scriptstyle{2n}$};
\node at (-4,6)[left]{$\scriptstyle{2n}$};
\node at (0,5)[above]{$\scriptstyle{n}$};
\node at (0,-5)[below]{$\scriptstyle{n}$};
\node at (-2,0)[left]{$\scriptstyle{n}$};
\node at (2,0)[right]{$\scriptstyle{n}$};
}\,\Bigg\rangle_{\! 3}$,
\end{itemize}
for $k=0,1,\dots,2n$.
\end{itemize}
\end{RMK}

\subsection{Invariant of unoriented $4$-valent rigid vertex graphs}
For unoriented $4$-valent rigid vertex graph, 
we will define the invariant by using the colored trivalent graphs.
Firstly, 
we represent two types of clasped $A_2$ web by using colored trivalent graphs with white and black vertices. 
In general, a diagrammatic expression of a colored trivalent graph for a $A_2$ web is given by Kim~\cite{Kim06}.

We denote
%%%%%%%%%%
$
\tikz[baseline=-.6ex, scale=.5]{
\draw[-<-=.8] (-.6,.4) -- (.6,.4);
\draw[->-=.8] (-.6,-.4) -- (.6,-.4);
\draw[fill=white] (-.1,-.6) rectangle (.1,.6);
\draw (-.1,.0) -- (.1,.0);
\node at (.4,-.6)[right]{$\scriptstyle{n}$};
\node at (-.4,-.6)[left]{$\scriptstyle{n}$};
\node at (.4,.6)[right]{$\scriptstyle{n}$};
\node at (-.4,.6)[left]{$\scriptstyle{n}$};}
$
%%%%%%%%%%
by
%%%%%%%%%%
$
\tikz[baseline=-.6ex, scale=0.5]{
\draw[triple={[line width=1.4pt, white] in [line width=2.2pt, black] in [line width=5.4pt, white]}]
(-1,0)--(1,0);
\node at (0,0) [above]{$\scriptstyle{n}$};
}
$
%%%%%%%%%%
and 
%%%%%%%%%%
$
\tikz[baseline=-.6ex, scale=.5]{
\draw[->-=.8] (-.6,.4) -- (.6,.4);
\draw[-<-=.8] (-.6,-.4) -- (.6,-.4);
\draw[fill=white] (-.1,-.6) rectangle (.1,.6);
\draw (-.1,.0) -- (.1,.0);
\node at (.4,-.6)[right]{$\scriptstyle{n}$};
\node at (-.4,-.6)[left]{$\scriptstyle{n}$};
\node at (.4,.6)[right]{$\scriptstyle{n}$};
\node at (-.4,.6)[left]{$\scriptstyle{n}$};}
$
%%%%%%%%%%
by
%%%%%%%%%%
$
\tikz[baseline=-.6ex, scale=0.5]{
\draw[-|-=.5, triple={[line width=1.4pt, white] in [line width=2.2pt, black] in [line width=5.4pt, white]}]
(-1,0)--(1,0);
\node at (0,0) [above]{$\scriptstyle{n}$};
}
$
%%%%%%%%%%
.

\begin{DEF}
Let $n$ be a non-negative integer. 
For $0\leq i\leq n$, 
we define two types of trivalent vertices as follows.
\[
%%%%%%%%%%
\tikz[baseline=-.6ex, scale=0.5]{
\draw[triple={[line width=1.4pt, white] in [line width=2.2pt, black] in [line width=5.4pt, white]}]
(0,0) -- (1,0);
\draw[-<-=.5] (-1,1) to[out=east, in=north west] (0,0);
\draw[->-=.5] (-1,-1) to[out=east, in=south west] (0,0);
\draw[fill=white] (0,0)  circle (.2);
\node at (-1,1) [above]{$\scriptstyle{n}$};
\node at (-1,-1) [above]{$\scriptstyle{n}$};
\node at (.5,0) [above]{$\scriptstyle{i}$};}
%%%%%%%%%%
\text{\ is defined by\ }
%%%%%%%%%%
\tikz[baseline=-.6ex, scale=0.5]{
\draw (-1,.9) -- +(-.5,0);
\draw (-1,-.9) -- +(-.5,0);
\draw (1,.3) -- +(.5,0);
\draw (1,-.3) -- +(.5,0);
\draw[-<-=.5] (-1,1) to[out=east, in=west] (1,.3);
\draw[->-=.5] (-1,-1) to[out=east, in=west] (1,-.3);
\draw[-<-=.5] (-1,.8) to[out=east, in=east] (-1,.-.8);
\draw[fill=white] (-1.2,.6) rectangle (-1,1.2);
\draw[fill=white] (-1.2,-.6) rectangle (-1,-1.2);
\draw[fill=white] (1,-.6) rectangle (1.2,.6);
\draw (1,.0) -- (1.2,.0);
\node at (-1,.6)[left]{$\scriptstyle{n}$};
\node at (-1,-.6)[left]{$\scriptstyle{n}$};
\node at (-1.2,0){$\scriptstyle{n-i}$};
\node at (0,1){$\scriptstyle{i}$};
\node at (0,-1){$\scriptstyle{i}$};}
%%%%%%%%%%
,
%%%%%%%%%%
\tikz[baseline=-.6ex, scale=0.5]{
\draw[-|-=.5, triple={[line width=1.4pt, white] in [line width=2.2pt, black] in [line width=5.4pt, white]}]
(0,0) -- (1,0);
\draw[->-=.5] (-1,1) to[out=east, in=north west] (0,0);
\draw[-<-=.5] (-1,-1) to[out=east, in=south west] (0,0);
\draw[fill=white] (0,0)  circle (.2);
\node at (-1,1) [above]{$\scriptstyle{n}$};
\node at (-1,-1) [above]{$\scriptstyle{n}$};
\node at (.5,0) [above]{$\scriptstyle{i}$};}
%%%%%%%%%%
\text{\ by\ }
%%%%%%%%%%
\tikz[baseline=-.6ex, scale=0.5]{
\draw (-1,.9) -- +(-.5,0);
\draw (-1,-.9) -- +(-.5,0);
\draw (1,.3) -- +(.5,0);
\draw (1,-.3) -- +(.5,0);
\draw[->-=.5] (-1,1) to[out=east, in=west] (1,.3);
\draw[-<-=.5] (-1,-1) to[out=east, in=west] (1,-.3);
\draw[->-=.5] (-1,.8) to[out=east, in=east] (-1,.-.8);
\draw[fill=white] (-1.2,.6) rectangle (-1,1.2);
\draw[fill=white] (-1.2,-.6) rectangle (-1,-1.2);
\draw[fill=white] (1,-.6) rectangle (1.2,.6);
\draw (1,.0) -- (1.2,.0);
\node at (-1,.6)[left]{$\scriptstyle{n}$};
\node at (-1,-.6)[left]{$\scriptstyle{n}$};
\node at (-1.2,0){$\scriptstyle{n-i}$};
\node at (0,1){$\scriptstyle{i}$};
\node at (0,-1){$\scriptstyle{i}$};}
%%%%%%%%%%
\]
and 
\[
%%%%%%%%%%
\tikz[baseline=-.6ex, scale=0.5]{
\draw[triple={[line width=1.4pt, white] in [line width=2.2pt, black] in [line width=5.4pt, white]}]
(0,0) -- (1,0);
\draw[->-=.5] (-1,1) to[out=east, in=north west] (0,0);
\draw[-<-=.5] (-1,-1) to[out=east, in=south west] (0,0);
\draw[fill=black] (0,0)  circle (.2);
\node at (-1,1) [above]{$\scriptstyle{n}$};
\node at (-1,-1) [above]{$\scriptstyle{n}$};
\node at (.5,0) [above]{$\scriptstyle{i}$};}
%%%%%%%%%%
\text{\ is defined by\ }
%%%%%%%%%%
\tikz[baseline=-.6ex, scale=0.5]{
\draw (-1,.9) -- +(-.5,0);
\draw (-1,-.9) -- +(-.5,0);
\draw (1,.3) -- +(.5,0);
\draw (1,-.3) -- +(.5,0);
\draw[-<-=.3, -<-=.9] (-1,-1) to[out=east, in=west] (1,.3);
\draw[->-=.3, ->-=.9] (-1,1) to[out=east, in=west] (1,-.3);
\draw[->-=.5] (-1,.8) to[out=east, in=east] (-1,.-.8);
%%%%%
\draw [fill=white] (0,0) -- (.3,.3) -- (.6,0) -- (.3,-.3) -- cycle;
\draw (0,0) -- (.6,0);
%%%%%
\draw[fill=white] (-1.2,.6) rectangle (-1,1.2);
\draw[fill=white] (-1.2,-.6) rectangle (-1,-1.2);
\draw[fill=white] (1,-.6) rectangle (1.2,.6);
\draw (1,.0) -- (1.2,.0);
\node at (-1,.6)[left]{$\scriptstyle{n}$};
\node at (-1,-.6)[left]{$\scriptstyle{n}$};
\node at (-1.2,0){$\scriptstyle{n-i}$};
\node at (0,1){$\scriptstyle{i}$};
\node at (0,-1){$\scriptstyle{i}$};}
%%%%%%%%%%
,
%%%%%%%%%%
\tikz[baseline=-.6ex, scale=0.5]{
\draw[-|-=.5, triple={[line width=1.4pt, white] in [line width=2.2pt, black] in [line width=5.4pt, white]}]
(0,0) -- (1,0);
\draw[-<-=.5] (-1,1) to[out=east, in=north west] (0,0);
\draw[-<-=.5] (-1,-1) to[out=east, in=south west] (0,0);
\draw[fill=black] (0,0)  circle (.2);
\node at (-1,1) [above]{$\scriptstyle{n}$};
\node at (-1,-1) [above]{$\scriptstyle{n}$};
\node at (.5,0) [above]{$\scriptstyle{i}$};}
%%%%%%%%%%
\text{\ by\ }
%%%%%%%%%%
\tikz[baseline=-.6ex, scale=0.5]{
\draw (-1,.9) -- +(-.5,0);
\draw (-1,-.9) -- +(-.5,0);
\draw (1,.3) -- +(.5,0);
\draw (1,-.3) -- +(.5,0);
\draw[->-=.3, ->-=.9] (-1,-1) to[out=east, in=west] (1,.3);
\draw[-<-=.3, -<-=.9] (-1,1) to[out=east, in=west] (1,-.3);
\draw[-<-=.5] (-1,.8) to[out=east, in=east] (-1,.-.8);
%%%%%
\draw [fill=white] (0,0) -- (.3,.3) -- (.6,0) -- (.3,-.3) -- cycle;
\draw (0,0) -- (.6,0);
%%%%%
\draw[fill=white] (-1.2,.6) rectangle (-1,1.2);
\draw[fill=white] (-1.2,-.6) rectangle (-1,-1.2);
\draw[fill=white] (1,-.6) rectangle (1.2,.6);
\draw (1,.0) -- (1.2,.0);
\node at (-1,.6)[left]{$\scriptstyle{n}$};
\node at (-1,-.6)[left]{$\scriptstyle{n}$};
\node at (-1.2,0){$\scriptstyle{n-i}$};
\node at (0,1){$\scriptstyle{i}$};
\node at (0,-1){$\scriptstyle{i}$};}
%%%%%%%%%%
\]
\end{DEF}

Let $\bar{G}$ be an unoriented $4$-valent rigid vertex graph diagram. 
\begin{DEF}\label{unoriKVdef}
We define a polynomial $\left[\bar{G}\right]_{(n,n)}$ by the following rules:
\begin{enumerate}
\item 
$\left[\tikz[baseline=-.6ex, scale=0.1]{
\draw (-5,0) -- (5,0);
}\right]_{(n,n)}
=
\Big\langle
\tikz[baseline=-.6ex, scale=0.1]{
\draw[triple={[line width=1.4pt, white] in [line width=2.2pt, black] in [line width=5.4pt, white]}] (-5,0) -- (5,0);
\node at (0,0)[above]{$\scriptstyle{n}$};
}\Big\rangle_3
$,
\item 
$
\left[\,\tikz[baseline=-.6ex, scale=.1]{
\draw (-4,4) -- +(-2,0);
\draw[white, double=black, double distance=0.4pt, ultra thick] 
(4,-4) to[out=west, in=east] (-4,4);
\draw (4,-4) -- +(2,0);
\draw (-4,-4) -- +(-2,0);
\draw[white, double=black, double distance=0.4pt, ultra thick] 
(-4,-4) to[out=east, in=west] (4,4);
\draw (4,4) -- +(2,0);
}\,\right]_{(n,n)}
=
\Bigg\langle\,\tikz[baseline=-.6ex, scale=.08]{
\draw (-4,4) -- +(-2,0);
\draw[triple={[line width=1.4pt, white] in [line width=2.2pt, black] in [line width=5.4pt, white]}] 
(6,-4) -- (4,-4) to[out=west, in=east] (-4,4) -- (-6,4);
\draw[triple={[line width=1.4pt, white] in [line width=2.2pt, black] in [line width=5.4pt, white]}] 
(-6,-4) -- (-4,-4) to[out=east, in=west] (4,4) -- (6,4);
\node at (4,-6)[right]{$\scriptstyle{n}$};
\node at (-4,-6)[left]{$\scriptstyle{n}$};
\node at (4,6)[right]{$\scriptstyle{n}$};
\node at (-4,6)[left]{$\scriptstyle{n}$};
}\,\Bigg\rangle_{\! 3}$,
\item 
$\left[\,\tikz[baseline=-.6ex, scale=.1]{
\draw (-4,4) -- +(-2,0);
\draw (4,-4) -- +(2,0);
\draw (-4,-4) -- +(-2,0);
\draw (4,4) -- +(2,0);
\draw (0,0) -- (2,3) to[out=north east, in=west] (4,4);
\draw (0,0) -- (-2,3) to[out=north west, in=east] (-4,4);
\draw (0,0) -- (2,-3) to[out=south east, in=west] (4,-4);
\draw (0,0) -- (-2,-3) to[out=south west, in=east] (-4,-4);
\draw[fill=cyan] (0,0) circle [radius=.8];
}\,\right]_{(n,n)}
=
\Bigg\langle\,\tikz[baseline=-.6ex, scale=.1]{
\draw[triple={[line width=1.4pt, white] in [line width=2.2pt, black] in [line width=5.4pt, white]}] 
(3,3) to[out=north east, in=west] (5,4) -- (7,4);
\draw[triple={[line width=1.4pt, white] in [line width=2.2pt, black] in [line width=5.4pt, white]}] 
(-3,3) to[out=north west, in=east] (-5,4) -- (-7,4);
\draw[triple={[line width=1.4pt, white] in [line width=2.2pt, black] in [line width=5.4pt, white]}] 
(3,-3) to[out=south east, in=west] (5,-4) -- (7,-4);
\draw[triple={[line width=1.4pt, white] in [line width=2.2pt, black] in [line width=5.4pt, white]}] 
(-3,-3) to[out=south west, in=east] (-5,-4) -- (-7,-4);
\draw[->-=.5] (3,3) -- (-3,3);
\draw[->-=.5] (-3,3) -- (-3,-3);
\draw[->-=.5] (-3,-3) -- (3,-3);
\draw[->-=.5] (3,-3) -- (3,3);
\draw[fill=white] (3,3) circle [radius=.8];
\draw[fill=white] (-3,3) circle [radius=.8];
\draw[fill=white] (3,-3) circle [radius=.8];
\draw[fill=white] (-3,-3) circle [radius=.8];
\node at (7,-4)[right]{$\scriptstyle{n}$};
\node at (-7,-4)[left]{$\scriptstyle{n}$};
\node at (7,4)[right]{$\scriptstyle{n}$};
\node at (-7,4)[left]{$\scriptstyle{n}$};
\node at (0,3)[above]{$\scriptstyle{n}$};
\node at (0,-3)[below]{$\scriptstyle{n}$};
\node at (3,0)[right]{$\scriptstyle{n}$};
\node at (-3,0)[left]{$\scriptstyle{n}$};
}\,\Bigg\rangle_{\! 3}
+
\Bigg\langle\,\tikz[baseline=-.6ex, scale=.1]{
\draw[triple={[line width=1.4pt, white] in [line width=2.2pt, black] in [line width=5.4pt, white]}] 
(3,3) to[out=north east, in=west] (5,4) -- (7,4);
\draw[triple={[line width=1.4pt, white] in [line width=2.2pt, black] in [line width=5.4pt, white]}] 
(-3,3) to[out=north west, in=east] (-5,4) -- (-7,4);
\draw[triple={[line width=1.4pt, white] in [line width=2.2pt, black] in [line width=5.4pt, white]}] 
(3,-3) to[out=south east, in=west] (5,-4) -- (7,-4);
\draw[triple={[line width=1.4pt, white] in [line width=2.2pt, black] in [line width=5.4pt, white]}] 
(-3,-3) to[out=south west, in=east] (-5,-4) -- (-7,-4);
\draw[-<-=.5] (3,3) -- (-3,3);
\draw[-<-=.5] (-3,3) -- (-3,-3);
\draw[-<-=.5] (-3,-3) -- (3,-3);
\draw[-<-=.5] (3,-3) -- (3,3);
\draw[fill=black] (3,3) circle [radius=.8];
\draw[fill=black] (-3,3) circle [radius=.8];
\draw[fill=black] (3,-3) circle [radius=.8];
\draw[fill=black] (-3,-3) circle [radius=.8];
\node at (7,-4)[right]{$\scriptstyle{n}$};
\node at (-7,-4)[left]{$\scriptstyle{n}$};
\node at (7,4)[right]{$\scriptstyle{n}$};
\node at (-7,4)[left]{$\scriptstyle{n}$};
\node at (0,3)[above]{$\scriptstyle{n}$};
\node at (0,-3)[below]{$\scriptstyle{n}$};
\node at (3,0)[right]{$\scriptstyle{n}$};
\node at (-3,0)[left]{$\scriptstyle{n}$};
}\,\Bigg\rangle_{\! 3}$.
\end{enumerate}
\end{DEF}

\begin{THM}\label{unoriKVthm}
$\left[\bar{G}\right]_{(n,n)}$ is invariant under the Reidemeister moves (RI) -- (RV).
\end{THM}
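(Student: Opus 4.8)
The plan is to mirror the two-stage argument used for the oriented invariant in Theorem~\ref{oriKVthm}. The moves (RI)--(RIV) come essentially for free: applying Definition~\ref{unoriKVdef}(1)--(2) replaces a neighborhood of an ordinary crossing by an honest crossing of the two $(n,n)$-clasped strands (drawn as tripled edges), while the trivalent-vertex neighborhoods are untouched. Since $\langle\,\cdot\,\rangle_3$ is a regular isotopy invariant of tangled trivalent graph diagrams (the remark following Definition~\ref{A2def}) and the clasp relation of Lemma~\ref{doubleA2claspprop} lets the cabled strands slide past trivalent vertices, these four moves leave $[\bar G]_{(n,n)}$ unchanged. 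Hence the whole content of the theorem lies in the rigid-vertex move (RV).

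For (RV) I would work with the two summands of Definition~\ref{unoriKVdef}(3) individually: the \emph{white square}, a coherently oriented $n$-colored $4$-cycle with a white trivalent vertex at each corner and four tripled legs, and the \emph{black square}, its orientation-reversed black-vertex counterpart. The move rotates the four legs by a half-twist about the rigid disk, and I would express this rotation as elementary leg-twists resolved through Lemma~\ref{twistcoeff} and Lemma~\ref{A2clasplem}. The key intermediate result I would need is the analogue of Lemma~\ref{4valenttwist} for these squares (obtained either directly, or by reducing each square to the diamond-vertex form to which Lemma~\ref{4valenttwist} already applies), namely that twisting one pair of legs over the square multiplies the bracket by $(-1)^n q^{\pm (n^2+3n)/6}$ and is undone by the complementary twist, so that a full half-twist of all four legs contributes the canceling product $C_n C_n^{-1}=1$, together with the residual $(q^{n^2/3})^2(q^{-n^2/3})^2=1$ coming from pushing the framing curls into the clasps via Lemma~\ref{A2clasplem}(1),(3). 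This is exactly the cancellation that drives the three displayed computations in the proof of Theorem~\ref{oriKVthm}.

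The step I expect to be the main obstacle is proving this twist behavior for the two squares and, in particular, controlling the black summand. Because a black vertex reverses the orientation on part of the $4$-cycle, the half-integral exponent $q^{\pm(n^2+3n)/6}$ and the sign $(-1)^n$ furnished by Lemma~\ref{4valenttwist} and Lemma~\ref{twistcoeff} enter the white and black squares with potentially different signs, and one must check that they still pair off. I would resolve this by verifying that a global orientation reversal carries the white square to the black square while negating every exponent in Lemma~\ref{twistcoeff}; consequently the twist on the top pair of legs and the twist on the bottom pair of legs of a single square contribute reciprocal scalars, and each square is \emph{separately} invariant under (RV). This simultaneously rules out the two summands mixing, so their sum is preserved.

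I would finish by noting that the second form of (RV) and the remaining choices of orientation and crossing data require no new work: each resolves, via Lemma~\ref{4valenttwist} and Lemma~\ref{A2clasplem}, to the identical cancellation of framing coefficients, so the computations are parallel to those already displayed. Collecting these local verifications for (RI)--(RV) yields the invariance of $[\bar G]_{(n,n)}$.
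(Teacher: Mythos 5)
Your treatment of (RI)--(RIV) is fine and agrees with the paper. The gap is in (RV): your central claim, that each of the two summands in Definition~\ref{unoriKVdef}~(3) is \emph{separately} invariant under (RV), is false, and the scalar-cancellation mechanism you propose for it does not exist. Crossing two $(n,n)$-clasped legs is not a scalar operation on these webs: Lemma~\ref{twistcoeff}~(1) replaces such a crossing by $(-1)^nq^{\pm n^2/6}$ times the clasped-crossing (``diamond'') web, which is a genuinely new local insertion, not a multiple of the uncrossed legs. In the oriented case this is harmless because the vertex web already \emph{is} a diamond, so a twist acts by a scalar times a rotation (Lemma~\ref{4valenttwist}); but for the white square the diamonds created at the top and bottom, once absorbed, convert every white vertex into a black vertex and reverse the orientation of the internal $n$-colored cycle. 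The paper's computation shows exactly this: with all scalars cancelling, $((-1)^nq^{n^2/6})^2((-1)^nq^{-n^2/6})^2=1$, the (RV) image of the white square \emph{equals the black square}, and symmetrically the (RV) image of the black square equals the white square. The two summands do mix --- they are interchanged --- and only their sum is invariant; this swap is the entire reason Definition~\ref{unoriKVdef}~(3) is a sum of two terms rather than a single one.

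Note also that your claim of separate invariance, combined with the correct identity ``white $+$ (RV) $=$ black'', would force the white and black squares to be equal as elements of the web space, which is false in general: for $n=1$, in the classical limit the clasp projects $V\otimes V^{*}$ onto the adjoint representation, and the two squares compute the invariants $(x_1,x_2,x_3,x_4)\mapsto\operatorname{tr}(x_1x_2x_3x_4)$ and $(x_1,x_2,x_3,x_4)\mapsto\operatorname{tr}(x_1x_4x_3x_2)$, i.e.\ the two distinct cyclic orderings, whose sum is the rotation-invariant combination exactly as in Kauffman--Vogel. Your supporting symmetry argument is also incorrect on two counts: global orientation reversal does not carry the white square to the black square (it yields a clockwise cycle with white-type vertices, i.e.\ direct bundling, whereas black vertices contain crossing diamonds), and the signs of the exponents in Lemma~\ref{twistcoeff} are attached to the over/under data of the crossings, not to orientations --- the paper states that the lemma holds with the same coefficients for the opposite orientations. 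To repair your proof, replace ``each square is separately invariant'' by the two swap identities and conclude invariance of the sum, which is precisely the paper's argument.
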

\begin{proof}
We show the invariance under the Reidemeister move (RV).
\begin{align*}
\Bigg\langle\,
%%%%%%%%%%
\tikz[baseline=-.6ex, scale=.1]{
%%%%%
\draw[triple={[line width=1.4pt, white] in [line width=2.2pt, black] in [line width=5.4pt, white]}] 
(3,3) to[out=north east, in=south] (5,5);
\draw[triple={[line width=1.4pt, white] in [line width=2.2pt, black] in [line width=5.4pt, white]}] 
(-3,3) to[out=north west, in=south] (-5,5);
\draw[triple={[line width=1.4pt, white] in [line width=2.2pt, black] in [line width=5.4pt, white]}] 
(3,-3) to[out=south east, in=north] (5,-5);
\draw[triple={[line width=1.4pt, white] in [line width=2.2pt, black] in [line width=5.4pt, white]}] 
(-3,-3) to[out=south west, in=north] (-5,-5);
%%%%%
\draw[triple={[line width=1.4pt, white] in [line width=2.2pt, black] in [line width=5.4pt, white]}] 
(5,5) to[out=north, in=south east] (-5,10);
\draw[triple={[line width=1.4pt, white] in [line width=2.2pt, black] in [line width=5.4pt, white]}] 
(-5,5) to[out=north, in=south west] (5,10);
\draw[triple={[line width=1.4pt, white] in [line width=2.2pt, black] in [line width=5.4pt, white]}] 
(5,-5) to[out=south, in=north east] (-5,-10);
\draw[triple={[line width=1.4pt, white] in [line width=2.2pt, black] in [line width=5.4pt, white]}] 
(-5,-5) to[out=south, in=north west] (5,-10);
%%%%%
\draw[->-=.5] (3,3) -- (-3,3);
\draw[->-=.5] (-3,3) -- (-3,-3);
\draw[->-=.5] (-3,-3) -- (3,-3);
\draw[->-=.5] (3,-3) -- (3,3);
%%%%%
\draw[fill=white] (3,3) circle [radius=.8];
\draw[fill=white] (-3,3) circle [radius=.8];
\draw[fill=white] (3,-3) circle [radius=.8];
\draw[fill=white] (-3,-3) circle [radius=.8];
%%%%%
\node at (5,-6)[right]{$\scriptstyle{n}$};
\node at (-5,-6)[left]{$\scriptstyle{n}$};
\node at (5,6)[right]{$\scriptstyle{n}$};
\node at (-5,6)[left]{$\scriptstyle{n}$};
\node at (0,3)[above]{$\scriptstyle{n}$};
\node at (0,-3)[below]{$\scriptstyle{n}$};
\node at (3,0)[right]{$\scriptstyle{n}$};
\node at (-3,0)[left]{$\scriptstyle{n}$};}
%%%%%%%%%%
\,\Bigg\rangle_{\! 3}
&=\Bigg\langle\,
%%%%%%%%%%
\tikz[baseline=-.6ex, scale=.1]{
\draw[triple={[line width=1.4pt, white] in [line width=2.2pt, black] in [line width=5.4pt, white]}]
(3.5,5) to[out=north, in=south] (-3.5,10) -- (-3.5,12);
\draw[triple={[line width=1.4pt, white] in [line width=2.2pt, black] in [line width=5.4pt, white]}]
(-3.5,5) to[out=north, in=south] (3.5,10) -- (3.5,12);
\draw[triple={[line width=1.4pt, white] in [line width=2.2pt, black] in [line width=5.4pt, white]}]
(3.5,-5) to[out=south, in=north] (-3.5,-10) -- (-3.5,-12);
\draw[triple={[line width=1.4pt, white] in [line width=2.2pt, black] in [line width=5.4pt, white]}]
(-3.5,-5) to[out=south, in=north] (3.5,-10) -- (3.5,-12);
%%%%%
\draw[-<-=.5] (-3,5) to[out=south, in=south] (3,5);
\draw[->-=.5] (-3,-5) to[out=north, in=north] (3,-5);
\draw[->-=.5] (4,-5) -- (4,5);
\draw[-<-=.5] (-4,-5) -- (-4,5);
%%%%%
\draw[fill=white] (2,10) rectangle (5,11);
\draw[fill=white] (-2,10) rectangle (-5,11);
\draw[fill=white] (2,-10) rectangle (5,-11);
\draw[fill=white] (-2,-10) rectangle (-5,-11);
\draw (3.5,10) -- (3.5,11);
\draw (-3.5,10) -- (-3.5,11);
\draw (3.5,-10) -- (3.5,-11);
\draw (-3.5,-10) -- (-3.5,-11);
%%%%%
\draw[fill=white] (2,4) rectangle (5,5);
\draw[fill=white] (-2,4) rectangle (-5,5);
\draw[fill=white] (2,-4) rectangle (5,-5);
\draw[fill=white] (-2,-4) rectangle (-5,-5);
\draw (3.5,4) -- (3.5,5);
\draw (-3.5,4) -- (-3.5,5);
\draw (3.5,-4) -- (3.5,-5);
\draw (-3.5,-4) -- (-3.5,-5);
%%%%%
\node at (-4,0)[left]{$\scriptstyle{n}$};
\node at (4,0)[right]{$\scriptstyle{n}$};
\node at (0,3)[below]{$\scriptstyle{n}$};
\node at (0,-3)[above]{$\scriptstyle{n}$};}
%%%%%%%%%%
\,\Bigg\rangle_{\! 3}
=\Bigg\langle\,
%%%%%%%%%%
\tikz[baseline=-.6ex, scale=.1]{
\draw[triple={[line width=1.4pt, white] in [line width=2.2pt, black] in [line width=5.4pt, white]}]
(3.5,5) to[out=north, in=south] (-3.5,10) -- (-3.5,12);
\draw[triple={[line width=1.4pt, white] in [line width=2.2pt, black] in [line width=5.4pt, white]}]
(-3.5,5) to[out=north, in=south] (3.5,10) -- (3.5,12);
\draw[triple={[line width=1.4pt, white] in [line width=2.2pt, black] in [line width=5.4pt, white]}]
(3.5,-5) to[out=south, in=north] (-3.5,-10) -- (-3.5,-12);
\draw[triple={[line width=1.4pt, white] in [line width=2.2pt, black] in [line width=5.4pt, white]}]
(-3.5,-5) to[out=south, in=north] (3.5,-10) -- (3.5,-12);
%%%%%
\draw[-<-=.5] (-3.2,5) to[out=south, in=south] (3.2,5);
\draw[->-=.5] (-3.2,-5) to[out=north, in=north] (3.2,-5);
\draw[->-=.5] (3.8,-5) -- (3.8,5);
\draw[-<-=.5] (-3.8,-5) -- (-3.8,5);
%%%%%
\draw[fill=white] (2,10) rectangle (5,11);
\draw[fill=white] (-2,10) rectangle (-5,11);
\draw[fill=white] (2,-10) rectangle (5,-11);
\draw[fill=white] (-2,-10) rectangle (-5,-11);
\draw (3.5,10) -- (3.5,11);
\draw (-3.5,10) -- (-3.5,11);
\draw (3.5,-10) -- (3.5,-11);
\draw (-3.5,-10) -- (-3.5,-11);
%%%%%
\node at (-4,0)[left]{$\scriptstyle{n}$};
\node at (4,0)[right]{$\scriptstyle{n}$};
\node at (0,3)[below]{$\scriptstyle{n}$};
\node at (0,-3)[above]{$\scriptstyle{n}$};}
%%%%%%%%%%
\,\Bigg\rangle_{\! 3}
=\Bigg\langle\,
%%%%%%%%%%
\tikz[baseline=-.6ex, scale=.1]{
%%%%%
\draw (2,9) -- (2,10);
\draw (-2,9) -- (-2,10);
\draw (2,-9) -- (2,-10);
\draw (-2,-9) -- (-2,-10);
\draw (4,9) -- (4,10);
\draw (-4,9) -- (-4,10);
\draw (4,-9) -- (4,-10);
\draw (-4,-9) -- (-4,-10);
%%%%%
\draw[white, double=black, double distance=0.4pt, ultra thick] 
(0,7) to[out=east, in=south] (4,8);
\draw[white, double=black, double distance=0.4pt, ultra thick] 
(0,-7) to[out=east, in=north] (4,-8);
%%%%%
\draw[->-=.5, white, double=black, double distance=0.4pt, ultra thick] 
(-2,-8) -- (-2,8);
\draw[-<-=.5, white, double=black, double distance=0.4pt, ultra thick] 
(2,-8) -- (2,8);
%%%%%
\draw[->-=1, white, double=black, double distance=0.4pt, ultra thick] 
(-4,8) to[out=south, in=west] (0,7);
\draw[-<-=1, white, double=black, double distance=0.4pt, ultra thick] 
(-4,-8) to[out=north, in=west] (0,-7);
%%%%%
\draw[fill=white] (1,8) rectangle (5,9);
\draw[fill=white] (-1,8) rectangle (-5,9);
\draw[fill=white] (1,-8) rectangle (5,-9);
\draw[fill=white] (-1,-8) rectangle (-5,-9);
\draw (3,8) -- (3,9);
\draw (-3,8) -- (-3,9);
\draw (3,-8) -- (3,-9);
\draw (-3,-8) -- (-3,-9);
%%%%%
\node at (-2,0)[left]{$\scriptstyle{n}$};
\node at (2,0)[right]{$\scriptstyle{n}$};
}
%%%%%%%%%%
\,\Bigg\rangle_{\! 3}\\
&=((-1)^nq^{\frac{n^2}{6}})^2((-1)^nq^{-\frac{n^2}{6}})^2
\Bigg\langle\,
%%%%%%%%%%
\tikz[baseline=-.6ex, scale=.1]{
%%%%%
\draw (2,9) -- (2,10);
\draw (-2,9) -- (-2,10);
\draw (2,-9) -- (2,-10);
\draw (-2,-9) -- (-2,-10);
\draw (4,9) -- (4,10);
\draw (-4,9) -- (-4,10);
\draw (4,-9) -- (4,-10);
\draw (-4,-9) -- (-4,-10);
%%%%%
\draw (4,8) to[out=south, in=north east] (3,4);
\draw (2,8) to[out=south, in=north west] (3,4);
\draw (-4,8) to[out=south, in=north west] (-3,4);
\draw (-2,8) to[out=south, in=north east] (-3,4);
\draw (4,-8) to[out=north, in=south east] (3,-4);
\draw (2,-8) to[out=north, in=south west] (3,-4);
\draw (-4,-8) to[out=north, in=south west] (-3,-4);
\draw (-2,-8) to[out=north, in=south east] (-3,-4);
%%%%%
\draw[-<-=.3] (3,4) to[out=south west, in=south east] (-3,4);
\draw[->-=.3] (3,-4) to[out=north west, in=north east] (-3,-4);
\draw[->-=.3] (3,4) to[out=south east, in=north east] (3,-4);
\draw[-<-=.3] (-3,4) to[out=south west, in=north west] (-3,-4);
%%%%%
\draw[fill=white] (3,5.5) -- (4.5,4) -- (3,2.5) -- (1.5,4) -- cycle;
\draw[fill=white] (-3,5.5) -- (-4.5,4) -- (-3,2.5) -- (-1.5,4) -- cycle;
\draw[fill=white] (3,-5.5) -- (4.5,-4) -- (3,-2.5) -- (1.5,-4) -- cycle;
\draw[fill=white] (-3,-5.5) -- (-4.5,-4) -- (-3,-2.5) -- (-1.5,-4) -- cycle;
\draw (3,5.5) -- (3,2.5);
\draw (-3,5.5) -- (-3,2.5);
\draw (3,-5.5) -- (3,-2.5);
\draw (-3,-5.5) -- (-3,-2.5);
%%%%%
\draw[fill=white] (1,8) rectangle (5,9);
\draw[fill=white] (-1,8) rectangle (-5,9);
\draw[fill=white] (1,-8) rectangle (5,-9);
\draw[fill=white] (-1,-8) rectangle (-5,-9);
\draw (3,8) -- (3,9);
\draw (-3,8) -- (-3,9);
\draw (3,-8) -- (3,-9);
\draw (-3,-8) -- (-3,-9);
%%%%%
\draw[fill=white] (6,-.5) rectangle (3.5,.5);
\draw[fill=white] (-6,-.5) rectangle (-3.5,.5);
\draw[fill=white] (-.5,4) rectangle (.5,1.5);
\draw[fill=white] (-.5,-4) rectangle (.5,-1.5);
%%%%%
\node at (-5,0)[left]{$\scriptstyle{n}$};
\node at (5,0)[right]{$\scriptstyle{n}$};
\node at (0,4)[above]{$\scriptstyle{n}$};
\node at (0,-4)[below]{$\scriptstyle{n}$};
}
%%%%%%%%%%
\,\Bigg\rangle_{\! 3}\\
&=\Bigg\langle\,
%%%%%%%%%%
\tikz[baseline=-.6ex, scale=.1]{
%%%%%
\draw[triple={[line width=1.4pt, white] in [line width=2.2pt, black] in [line width=5.4pt, white]}] 
(3,3) to[out=north east, in=south] (5,5);
\draw[triple={[line width=1.4pt, white] in [line width=2.2pt, black] in [line width=5.4pt, white]}] 
(-3,3) to[out=north west, in=south] (-5,5);
\draw[triple={[line width=1.4pt, white] in [line width=2.2pt, black] in [line width=5.4pt, white]}] 
(3,-3) to[out=south east, in=north] (5,-5);
\draw[triple={[line width=1.4pt, white] in [line width=2.2pt, black] in [line width=5.4pt, white]}] 
(-3,-3) to[out=south west, in=north] (-5,-5);
%%%%%
\draw[-<-=.5] (3,3) -- (-3,3);
\draw[-<-=.5] (-3,3) -- (-3,-3);
\draw[-<-=.5] (-3,-3) -- (3,-3);
\draw[-<-=.5] (3,-3) -- (3,3);
%%%%%
\draw[fill=black] (3,3) circle [radius=.8];
\draw[fill=black] (-3,3) circle [radius=.8];
\draw[fill=black] (3,-3) circle [radius=.8];
\draw[fill=black] (-3,-3) circle [radius=.8];
%%%%%
\node at (5,-6)[right]{$\scriptstyle{n}$};
\node at (-5,-6)[left]{$\scriptstyle{n}$};
\node at (5,6)[right]{$\scriptstyle{n}$};
\node at (-5,6)[left]{$\scriptstyle{n}$};
\node at (0,3)[above]{$\scriptstyle{n}$};
\node at (0,-3)[below]{$\scriptstyle{n}$};
\node at (3,0)[right]{$\scriptstyle{n}$};
\node at (-3,0)[left]{$\scriptstyle{n}$};}
%%%%%%%%%%
\,\Bigg\rangle_{\! 3}
\end{align*}
The above calculation is similar to the final calculation of the proof of Theorem~\ref{oriKVthm}. 
We used Lemma~\ref{twistcoeff}~(1) in the second line.
In the same way, 
we can show 
\[
\Bigg\langle\,
%%%%%%%%%%
\tikz[baseline=-.6ex, scale=.1]{
%%%%%
\draw[triple={[line width=1.4pt, white] in [line width=2.2pt, black] in [line width=5.4pt, white]}] 
(3,3) to[out=north east, in=south] (5,5);
\draw[triple={[line width=1.4pt, white] in [line width=2.2pt, black] in [line width=5.4pt, white]}] 
(-3,3) to[out=north west, in=south] (-5,5);
\draw[triple={[line width=1.4pt, white] in [line width=2.2pt, black] in [line width=5.4pt, white]}] 
(3,-3) to[out=south east, in=north] (5,-5);
\draw[triple={[line width=1.4pt, white] in [line width=2.2pt, black] in [line width=5.4pt, white]}] 
(-3,-3) to[out=south west, in=north] (-5,-5);
%%%%%
\draw[triple={[line width=1.4pt, white] in [line width=2.2pt, black] in [line width=5.4pt, white]}] 
(-5,5) to[out=north, in=south west] (5,10);
\draw[triple={[line width=1.4pt, white] in [line width=2.2pt, black] in [line width=5.4pt, white]}] 
(5,5) to[out=north, in=south east] (-5,10);
\draw[triple={[line width=1.4pt, white] in [line width=2.2pt, black] in [line width=5.4pt, white]}] 
(-5,-5) to[out=south, in=north west] (5,-10);
\draw[triple={[line width=1.4pt, white] in [line width=2.2pt, black] in [line width=5.4pt, white]}] 
(5,-5) to[out=south, in=north east] (-5,-10);
%%%%%
\draw[->-=.5] (3,3) -- (-3,3);
\draw[->-=.5] (-3,3) -- (-3,-3);
\draw[->-=.5] (-3,-3) -- (3,-3);
\draw[->-=.5] (3,-3) -- (3,3);
%%%%%
\draw[fill=white] (3,3) circle [radius=.8];
\draw[fill=white] (-3,3) circle [radius=.8];
\draw[fill=white] (3,-3) circle [radius=.8];
\draw[fill=white] (-3,-3) circle [radius=.8];
%%%%%
\node at (5,-6)[right]{$\scriptstyle{n}$};
\node at (-5,-6)[left]{$\scriptstyle{n}$};
\node at (5,6)[right]{$\scriptstyle{n}$};
\node at (-5,6)[left]{$\scriptstyle{n}$};
\node at (0,3)[above]{$\scriptstyle{n}$};
\node at (0,-3)[below]{$\scriptstyle{n}$};
\node at (3,0)[right]{$\scriptstyle{n}$};
\node at (-3,0)[left]{$\scriptstyle{n}$};}
%%%%%%%%%%
\,\Bigg\rangle_{\! 3}
=\Bigg\langle\,
%%%%%%%%%%
\tikz[baseline=-.6ex, scale=.1]{
%%%%%
\draw[triple={[line width=1.4pt, white] in [line width=2.2pt, black] in [line width=5.4pt, white]}] 
(3,3) to[out=north east, in=south] (5,5);
\draw[triple={[line width=1.4pt, white] in [line width=2.2pt, black] in [line width=5.4pt, white]}] 
(-3,3) to[out=north west, in=south] (-5,5);
\draw[triple={[line width=1.4pt, white] in [line width=2.2pt, black] in [line width=5.4pt, white]}] 
(3,-3) to[out=south east, in=north] (5,-5);
\draw[triple={[line width=1.4pt, white] in [line width=2.2pt, black] in [line width=5.4pt, white]}] 
(-3,-3) to[out=south west, in=north] (-5,-5);
%%%%%
\draw[-<-=.5] (3,3) -- (-3,3);
\draw[-<-=.5] (-3,3) -- (-3,-3);
\draw[-<-=.5] (-3,-3) -- (3,-3);
\draw[-<-=.5] (3,-3) -- (3,3);
%%%%%
\draw[fill=black] (3,3) circle [radius=.8];
\draw[fill=black] (-3,3) circle [radius=.8];
\draw[fill=black] (3,-3) circle [radius=.8];
\draw[fill=black] (-3,-3) circle [radius=.8];
%%%%%
\node at (5,-6)[right]{$\scriptstyle{n}$};
\node at (-5,-6)[left]{$\scriptstyle{n}$};
\node at (5,6)[right]{$\scriptstyle{n}$};
\node at (-5,6)[left]{$\scriptstyle{n}$};
\node at (0,3)[above]{$\scriptstyle{n}$};
\node at (0,-3)[below]{$\scriptstyle{n}$};
\node at (3,0)[right]{$\scriptstyle{n}$};
\node at (-3,0)[left]{$\scriptstyle{n}$};}
%%%%%%%%%%
\,\Bigg\rangle_{\! 3}\ .
\]
These two identities imply
\[
\Bigg\langle\,
%%%%%%%%%%
\tikz[baseline=-.6ex, scale=.1]{
%%%%%
\draw[triple={[line width=1.4pt, white] in [line width=2.2pt, black] in [line width=5.4pt, white]}] 
(3,3) to[out=north east, in=south] (5,5);
\draw[triple={[line width=1.4pt, white] in [line width=2.2pt, black] in [line width=5.4pt, white]}] 
(-3,3) to[out=north west, in=south] (-5,5);
\draw[triple={[line width=1.4pt, white] in [line width=2.2pt, black] in [line width=5.4pt, white]}] 
(3,-3) to[out=south east, in=north] (5,-5);
\draw[triple={[line width=1.4pt, white] in [line width=2.2pt, black] in [line width=5.4pt, white]}] 
(-3,-3) to[out=south west, in=north] (-5,-5);
%%%%%
\draw[triple={[line width=1.4pt, white] in [line width=2.2pt, black] in [line width=5.4pt, white]}] 
(5,5) to[out=north, in=south east] (-5,10);
\draw[triple={[line width=1.4pt, white] in [line width=2.2pt, black] in [line width=5.4pt, white]}] 
(-5,5) to[out=north, in=south west] (5,10);
\draw[triple={[line width=1.4pt, white] in [line width=2.2pt, black] in [line width=5.4pt, white]}] 
(5,-5) to[out=south, in=north east] (-5,-10);
\draw[triple={[line width=1.4pt, white] in [line width=2.2pt, black] in [line width=5.4pt, white]}] 
(-5,-5) to[out=south, in=north west] (5,-10);
%%%%%
\draw[-<-=.5] (3,3) -- (-3,3);
\draw[-<-=.5] (-3,3) -- (-3,-3);
\draw[-<-=.5] (-3,-3) -- (3,-3);
\draw[-<-=.5] (3,-3) -- (3,3);
%%%%%
\draw[fill=black] (3,3) circle [radius=.8];
\draw[fill=black] (-3,3) circle [radius=.8];
\draw[fill=black] (3,-3) circle [radius=.8];
\draw[fill=black] (-3,-3) circle [radius=.8];
%%%%%
\node at (5,-6)[right]{$\scriptstyle{n}$};
\node at (-5,-6)[left]{$\scriptstyle{n}$};
\node at (5,6)[right]{$\scriptstyle{n}$};
\node at (-5,6)[left]{$\scriptstyle{n}$};
\node at (0,3)[above]{$\scriptstyle{n}$};
\node at (0,-3)[below]{$\scriptstyle{n}$};
\node at (3,0)[right]{$\scriptstyle{n}$};
\node at (-3,0)[left]{$\scriptstyle{n}$};}
%%%%%%%%%%
\,\Bigg\rangle_{\! 3}
=\Bigg\langle\,
%%%%%%%%%%
\tikz[baseline=-.6ex, scale=.1]{
%%%%%
\draw[triple={[line width=1.4pt, white] in [line width=2.2pt, black] in [line width=5.4pt, white]}] 
(3,3) to[out=north east, in=south] (5,5);
\draw[triple={[line width=1.4pt, white] in [line width=2.2pt, black] in [line width=5.4pt, white]}] 
(-3,3) to[out=north west, in=south] (-5,5);
\draw[triple={[line width=1.4pt, white] in [line width=2.2pt, black] in [line width=5.4pt, white]}] 
(3,-3) to[out=south east, in=north] (5,-5);
\draw[triple={[line width=1.4pt, white] in [line width=2.2pt, black] in [line width=5.4pt, white]}] 
(-3,-3) to[out=south west, in=north] (-5,-5);
%%%%%
\draw[->-=.5] (3,3) -- (-3,3);
\draw[->-=.5] (-3,3) -- (-3,-3);
\draw[->-=.5] (-3,-3) -- (3,-3);
\draw[->-=.5] (3,-3) -- (3,3);
%%%%%
\draw[fill=white] (3,3) circle [radius=.8];
\draw[fill=white] (-3,3) circle [radius=.8];
\draw[fill=white] (3,-3) circle [radius=.8];
\draw[fill=white] (-3,-3) circle [radius=.8];
%%%%%
\node at (5,-6)[right]{$\scriptstyle{n}$};
\node at (-5,-6)[left]{$\scriptstyle{n}$};
\node at (5,6)[right]{$\scriptstyle{n}$};
\node at (-5,6)[left]{$\scriptstyle{n}$};
\node at (0,3)[above]{$\scriptstyle{n}$};
\node at (0,-3)[below]{$\scriptstyle{n}$};
\node at (3,0)[right]{$\scriptstyle{n}$};
\node at (-3,0)[left]{$\scriptstyle{n}$};}
%%%%%%%%%%
\,\Bigg\rangle_{\! 3}
\ \text{and}\ 
\Bigg\langle\,
%%%%%%%%%%
\tikz[baseline=-.6ex, scale=.1]{
%%%%%
\draw[triple={[line width=1.4pt, white] in [line width=2.2pt, black] in [line width=5.4pt, white]}] 
(3,3) to[out=north east, in=south] (5,5);
\draw[triple={[line width=1.4pt, white] in [line width=2.2pt, black] in [line width=5.4pt, white]}] 
(-3,3) to[out=north west, in=south] (-5,5);
\draw[triple={[line width=1.4pt, white] in [line width=2.2pt, black] in [line width=5.4pt, white]}] 
(3,-3) to[out=south east, in=north] (5,-5);
\draw[triple={[line width=1.4pt, white] in [line width=2.2pt, black] in [line width=5.4pt, white]}] 
(-3,-3) to[out=south west, in=north] (-5,-5);
%%%%%
\draw[triple={[line width=1.4pt, white] in [line width=2.2pt, black] in [line width=5.4pt, white]}] 
(-5,5) to[out=north, in=south west] (5,10);
\draw[triple={[line width=1.4pt, white] in [line width=2.2pt, black] in [line width=5.4pt, white]}] 
(5,5) to[out=north, in=south east] (-5,10);
\draw[triple={[line width=1.4pt, white] in [line width=2.2pt, black] in [line width=5.4pt, white]}] 
(-5,-5) to[out=south, in=north west] (5,-10);
\draw[triple={[line width=1.4pt, white] in [line width=2.2pt, black] in [line width=5.4pt, white]}] 
(5,-5) to[out=south, in=north east] (-5,-10);
%%%%%
\draw[-<-=.5] (3,3) -- (-3,3);
\draw[-<-=.5] (-3,3) -- (-3,-3);
\draw[-<-=.5] (-3,-3) -- (3,-3);
\draw[-<-=.5] (3,-3) -- (3,3);
%%%%%
\draw[fill=black] (3,3) circle [radius=.8];
\draw[fill=black] (-3,3) circle [radius=.8];
\draw[fill=black] (3,-3) circle [radius=.8];
\draw[fill=black] (-3,-3) circle [radius=.8];
%%%%%
\node at (5,-6)[right]{$\scriptstyle{n}$};
\node at (-5,-6)[left]{$\scriptstyle{n}$};
\node at (5,6)[right]{$\scriptstyle{n}$};
\node at (-5,6)[left]{$\scriptstyle{n}$};
\node at (0,3)[above]{$\scriptstyle{n}$};
\node at (0,-3)[below]{$\scriptstyle{n}$};
\node at (3,0)[right]{$\scriptstyle{n}$};
\node at (-3,0)[left]{$\scriptstyle{n}$};}
%%%%%%%%%%
\,\Bigg\rangle_{\! 3}
=\Bigg\langle\,
%%%%%%%%%%
\tikz[baseline=-.6ex, scale=.1]{
%%%%%
\draw[triple={[line width=1.4pt, white] in [line width=2.2pt, black] in [line width=5.4pt, white]}] 
(3,3) to[out=north east, in=south] (5,5);
\draw[triple={[line width=1.4pt, white] in [line width=2.2pt, black] in [line width=5.4pt, white]}] 
(-3,3) to[out=north west, in=south] (-5,5);
\draw[triple={[line width=1.4pt, white] in [line width=2.2pt, black] in [line width=5.4pt, white]}] 
(3,-3) to[out=south east, in=north] (5,-5);
\draw[triple={[line width=1.4pt, white] in [line width=2.2pt, black] in [line width=5.4pt, white]}] 
(-3,-3) to[out=south west, in=north] (-5,-5);
%%%%%
\draw[->-=.5] (3,3) -- (-3,3);
\draw[->-=.5] (-3,3) -- (-3,-3);
\draw[->-=.5] (-3,-3) -- (3,-3);
\draw[->-=.5] (3,-3) -- (3,3);
%%%%%
\draw[fill=white] (3,3) circle [radius=.8];
\draw[fill=white] (-3,3) circle [radius=.8];
\draw[fill=white] (3,-3) circle [radius=.8];
\draw[fill=white] (-3,-3) circle [radius=.8];
%%%%%
\node at (5,-6)[right]{$\scriptstyle{n}$};
\node at (-5,-6)[left]{$\scriptstyle{n}$};
\node at (5,6)[right]{$\scriptstyle{n}$};
\node at (-5,6)[left]{$\scriptstyle{n}$};
\node at (0,3)[above]{$\scriptstyle{n}$};
\node at (0,-3)[below]{$\scriptstyle{n}$};
\node at (3,0)[right]{$\scriptstyle{n}$};
\node at (-3,0)[left]{$\scriptstyle{n}$};}
%%%%%%%%%%
\,\Bigg\rangle_{\! 3}\ .
\]
Consequently, 
$\left[\,
%%%%%%%%%%
\tikz[baseline=-.6ex, scale=.1]{
\draw (-4,4) -- +(-2,0);
\draw (4,-4) -- +(2,0);
\draw (-4,-4) -- +(-2,0);
\draw (4,4) -- +(2,0);
\draw (0,0) -- (2,3) to[out=north east, in=west] (4,4);
\draw (0,0) -- (-2,3) to[out=north west, in=east] (-4,4);
\draw (0,0) -- (2,-3) to[out=south east, in=west] (4,-4);
\draw (0,0) -- (-2,-3) to[out=south west, in=east] (-4,-4);
\draw[fill=cyan] (0,0) circle [radius=.8];}
%%%%%%%%%%
\,\right]_{(n,n)}
=
\Bigg\langle\,\tikz[baseline=-.6ex, scale=.1]{
\draw[triple={[line width=1.4pt, white] in [line width=2.2pt, black] in [line width=5.4pt, white]}] 
(3,3) to[out=north east, in=west] (5,4) -- (7,4);
\draw[triple={[line width=1.4pt, white] in [line width=2.2pt, black] in [line width=5.4pt, white]}] 
(-3,3) to[out=north west, in=east] (-5,4) -- (-7,4);
\draw[triple={[line width=1.4pt, white] in [line width=2.2pt, black] in [line width=5.4pt, white]}] 
(3,-3) to[out=south east, in=west] (5,-4) -- (7,-4);
\draw[triple={[line width=1.4pt, white] in [line width=2.2pt, black] in [line width=5.4pt, white]}] 
(-3,-3) to[out=south west, in=east] (-5,-4) -- (-7,-4);
\draw[->-=.5] (3,3) -- (-3,3);
\draw[->-=.5] (-3,3) -- (-3,-3);
\draw[->-=.5] (-3,-3) -- (3,-3);
\draw[->-=.5] (3,-3) -- (3,3);
\draw[fill=white] (3,3) circle [radius=.8];
\draw[fill=white] (-3,3) circle [radius=.8];
\draw[fill=white] (3,-3) circle [radius=.8];
\draw[fill=white] (-3,-3) circle [radius=.8];
\node at (7,-4)[right]{$\scriptstyle{n}$};
\node at (-7,-4)[left]{$\scriptstyle{n}$};
\node at (7,4)[right]{$\scriptstyle{n}$};
\node at (-7,4)[left]{$\scriptstyle{n}$};
\node at (0,3)[above]{$\scriptstyle{n}$};
\node at (0,-3)[below]{$\scriptstyle{n}$};
\node at (3,0)[right]{$\scriptstyle{n}$};
\node at (-3,0)[left]{$\scriptstyle{n}$};
}\,\Bigg\rangle_{\! 3}
+
\Bigg\langle\,\tikz[baseline=-.6ex, scale=.1]{
\draw[triple={[line width=1.4pt, white] in [line width=2.2pt, black] in [line width=5.4pt, white]}] 
(3,3) to[out=north east, in=west] (5,4) -- (7,4);
\draw[triple={[line width=1.4pt, white] in [line width=2.2pt, black] in [line width=5.4pt, white]}] 
(-3,3) to[out=north west, in=east] (-5,4) -- (-7,4);
\draw[triple={[line width=1.4pt, white] in [line width=2.2pt, black] in [line width=5.4pt, white]}] 
(3,-3) to[out=south east, in=west] (5,-4) -- (7,-4);
\draw[triple={[line width=1.4pt, white] in [line width=2.2pt, black] in [line width=5.4pt, white]}] 
(-3,-3) to[out=south west, in=east] (-5,-4) -- (-7,-4);
\draw[-<-=.5] (3,3) -- (-3,3);
\draw[-<-=.5] (-3,3) -- (-3,-3);
\draw[-<-=.5] (-3,-3) -- (3,-3);
\draw[-<-=.5] (3,-3) -- (3,3);
\draw[fill=black] (3,3) circle [radius=.8];
\draw[fill=black] (-3,3) circle [radius=.8];
\draw[fill=black] (3,-3) circle [radius=.8];
\draw[fill=black] (-3,-3) circle [radius=.8];
\node at (7,-4)[right]{$\scriptstyle{n}$};
\node at (-7,-4)[left]{$\scriptstyle{n}$};
\node at (7,4)[right]{$\scriptstyle{n}$};
\node at (-7,4)[left]{$\scriptstyle{n}$};
\node at (0,3)[above]{$\scriptstyle{n}$};
\node at (0,-3)[below]{$\scriptstyle{n}$};
\node at (3,0)[right]{$\scriptstyle{n}$};
\node at (-3,0)[left]{$\scriptstyle{n}$};
}\,\Bigg\rangle_{\! 3}$  is invariant under the Reidemeister move (RV).
\end{proof}

\section{Computing the $A_2$ colored Kauffman-Vogel polynomials}
We define the oriented $4$-valent rigid vertex graph $\operatorname{ST}(k,l)$ and the unoriented $4$-valent rigid vertex graph $\bar{\operatorname{ST}}(k,l)$ as follows:

\[
\operatorname{ST}(k,l)
=
%%%%%%%%%%
\tikz[baseline=-.6ex, scale=.1]{
\begin{scope}
\draw (0,0) -- (2,3) to[out=north east, in=west] (4,4);
\draw (0,0) -- (-2,3) to[out=north west, in=east] (-4,4);
\draw (0,0) -- (2,-3) to[out=south east, in=west] (4,-4);
\draw (0,0) -- (-2,-3) to[out=south west, in=east] (-4,-4);
\draw (-4,6) -- (4,6);
\draw (-4,8) -- (4,8);
\draw[-<-=.5] (-4,4) to[out=west, in=west] (-4,6);
\draw[-<-=.5] (-4,-4) to[out=west, in=west] (-4,8);
\draw[fill=cyan] (0,0) circle [radius=.8];
\end{scope}
\begin{scope}[xshift=8cm]
\node at (0,0) {$\cdots$};
\node at (0,-4) [below]{$k$ vertices};
\draw (-4,6) -- (4,6);
\draw (-4,8) -- (4,8);
\end{scope}
\begin{scope}[xshift=16cm]
\draw (0,0) -- (2,3) to[out=north east, in=west] (4,4);
\draw (0,0) -- (-2,3) to[out=north west, in=east] (-4,4);
\draw (0,0) -- (2,-3) to[out=south east, in=west] (4,-4);
\draw (0,0) -- (-2,-3) to[out=south west, in=east] (-4,-4);
\draw (-4,6) -- (4,6);
\draw (-4,8) -- (4,8);
\draw[fill=cyan] (0,0) circle [radius=.8];
\end{scope}
\begin{scope}[xshift=24cm]
\draw[->-=.2, white, double=black, double distance=0.4pt, ultra thick] 
(-4,-4) to[out=east, in=west] (4,4);
\draw[-<-=.8, white, double=black, double distance=0.4pt, ultra thick] 
(4,-4) to[out=west, in=east] (-4,4);
\draw (-4,6) -- (4,6);
\draw (-4,8) -- (4,8);
\end{scope}
\begin{scope}[xshift=32cm]
\node at (0,0) {$\cdots$};
\node at (0,-4) [below]{$l$ crossings};
\draw (-4,6) -- (4,6);
\draw (-4,8) -- (4,8);
\end{scope}
\begin{scope}[xshift=40cm]
\draw[white, double=black, double distance=0.4pt, ultra thick] 
(-4,-4) to[out=east, in=west] (4,4);
\draw[white, double=black, double distance=0.4pt, ultra thick] 
(4,-4) to[out=west, in=east] (-4,4);
\draw (-4,6) -- (4,6);
\draw (-4,8) -- (4,8);
\draw (4,4) to[out=east, in=east] (4,6);
\draw (4,-4) to[out=east, in=east] (4,8);
\end{scope}
}
%%%%%%%%%%
,\]
\[
\bar{\operatorname{ST}}(k,l)
=
%%%%%%%%%%
\tikz[baseline=-.6ex, scale=.1]{
\begin{scope}
\draw (0,0) -- (2,3) to[out=north east, in=west] (4,4);
\draw (0,0) -- (-2,3) to[out=north west, in=east] (-4,4);
\draw (0,0) -- (2,-3) to[out=south east, in=west] (4,-4);
\draw (0,0) -- (-2,-3) to[out=south west, in=east] (-4,-4);
\draw (-4,6) -- (4,6);
\draw (-4,8) -- (4,8);
\draw (-4,4) to[out=west, in=west] (-4,6);
\draw (-4,-4) to[out=west, in=west] (-4,8);
\draw[fill=cyan] (0,0) circle [radius=.8];
\end{scope}
\begin{scope}[xshift=8cm]
\node at (0,0) {$\cdots$};
\node at (0,-4) [below]{$k$ vertices};
\draw (-4,6) -- (4,6);
\draw (-4,8) -- (4,8);
\end{scope}
\begin{scope}[xshift=16cm]
\draw (0,0) -- (2,3) to[out=north east, in=west] (4,4);
\draw (0,0) -- (-2,3) to[out=north west, in=east] (-4,4);
\draw (0,0) -- (2,-3) to[out=south east, in=west] (4,-4);
\draw (0,0) -- (-2,-3) to[out=south west, in=east] (-4,-4);
\draw (-4,6) -- (4,6);
\draw (-4,8) -- (4,8);
\draw[fill=cyan] (0,0) circle [radius=.8];
\end{scope}
\begin{scope}[xshift=24cm]
\draw[white, double=black, double distance=0.4pt, ultra thick] 
(-4,-4) to[out=east, in=west] (4,4);
\draw[white, double=black, double distance=0.4pt, ultra thick] 
(4,-4) to[out=west, in=east] (-4,4);
\draw (-4,6) -- (4,6);
\draw (-4,8) -- (4,8);
\end{scope}
\begin{scope}[xshift=32cm]
\node at (0,0) {$\cdots$};
\node at (0,-4) [below]{$l$ crossings};
\draw (-4,6) -- (4,6);
\draw (-4,8) -- (4,8);
\end{scope}
\begin{scope}[xshift=40cm]
\draw[white, double=black, double distance=0.4pt, ultra thick] 
(-4,-4) to[out=east, in=west] (4,4);
\draw[white, double=black, double distance=0.4pt, ultra thick] 
(4,-4) to[out=west, in=east] (-4,4);
\draw (-4,6) -- (4,6);
\draw (-4,8) -- (4,8);
\draw (4,4) to[out=east, in=east] (4,6);
\draw (4,-4) to[out=east, in=east] (4,8);
\end{scope}
}
%%%%%%%%%%
,\]

Elhamdadi and Hajij computed the one-variable Kauffman-Vogel invariant for the Kauffman bracket of $\bar{\operatorname{ST}}(k,l)$ in \cite{ElhamdadiHajij17B}. 
We only compute the one-variable Kauffman-Vogel invariant for the $A_2$ bracket in easy cases.

We use the following formulas to calculate the invariants for some examples. 
Let us denote a $q$-Pochhammer symbol by 
\[
 (q)_k=\prod_{l=1}^{k}(1-q^l)
\]
 and a $q$-binomial coefficient by
\[
 {n\choose k}_q=\frac{(q)_n}{(q)_k(q)_{n-k}}
\]
for $k\leq n$.
We also define a $q$-multinomial coefficient as
\[
 {n\choose n_1,n_2,\dots,n_m}_q=\frac{(q)_n}{(q)_{n_1}(q)_{n_2}\cdots(q)_{n_m}},
\]
where $n_1, n_2,\dots, n_m$ are non-negative integers such that $n_1+n_2+\dots+n_m=n$.

\begin{THM}[{\cite[Theorem~3.11]{Yuasa17}}]\label{coloredA2}
Let $n$ be a positive integer.
\begin{enumerate}
\item 
$\displaystyle
\Bigg\langle\,\tikz[baseline=-.6ex]{
\draw (-.4,.4) -- +(-.2,0);
\draw[->-=.8, white, double=black, double distance=0.4pt, ultra thick] 
(.4,-.4) to[out=west, in=east] (-.4,.4);
\draw (.4,-.4) -- +(.2,0);
\draw (-.4,-.4) -- +(-.2,0);
\draw[->-=.8, white, double=black, double distance=0.4pt, ultra thick] 
(-.4,-.4) to[out=east, in=west] (.4,.4);
\draw (.4,.4) -- +(.2,0);
\draw[fill=white] (.4,-.6) rectangle +(.1,.4);
\draw[fill=white] (-.4,-.6) rectangle +(-.1,.4);
\draw[fill=white] (.4,.6) rectangle +(.1,-.4);
\draw[fill=white] (-.4,.6) rectangle +(-.1,-.4);
\node at (.4,-.6)[right]{$\scriptstyle{n}$};
\node at (-.4,-.6)[left]{$\scriptstyle{n}$};
\node at (.4,.6)[right]{$\scriptstyle{n}$};
\node at (-.4,.6)[left]{$\scriptstyle{n}$};
}\,\Bigg\rangle_{\! 3}
=\sum_{k=0}^{n} (-1)^kq^{\frac{2n^2-6nk+3k^2}{6}}{n\choose k}_q
\Bigg\langle\,\tikz[baseline=-.6ex]{
\draw 
(-.5,.4) -- +(-.2,0)
(.5,-.4) -- +(.2,0)
(-.5,-.4) -- +(-.2,0)
(.5,.4) -- +(.2,0);
\draw[-<-=.5] (-.5,.3) to[out=east, in=east] (-.5,-.3);
\draw[-<-=.5] (.5,.3) to[out=west, in=west] (.5,-.3);
\draw[-<-=.5] (-.5,.5) to[out=east, in=north west] (.0,.0);
\draw[-<-=.5] (.0,.0) to[out=south east, in=west] (.5,-.5);
\draw[-<-=.5] (.5,.5) to[out=west, in=north east] (.0,.0);
\draw[-<-=.5] (.0,.0) to[out=south west, in=east] (-.5,-.5);
\draw[fill=white] (.2,0) -- (0,.2) -- (-.2,0) -- (0,-.2) -- cycle;
\draw (-.2,0) -- (.2,0);
\draw[fill=white] (.5,-.6) rectangle +(.1,.4);
\draw[fill=white] (-.5,-.6) rectangle +(-.1,.4);
\draw[fill=white] (.5,.6) rectangle +(.1,-.4);
\draw[fill=white] (-.5,.6) rectangle +(-.1,-.4);
\node at (.5,-.6)[right]{$\scriptstyle{n}$};
\node at (-.5,-.6)[left]{$\scriptstyle{n}$};
\node at (.5,.6)[right]{$\scriptstyle{n}$};
\node at (-.5,.6)[left]{$\scriptstyle{n}$};
\node at (-.3,0)[left]{$\scriptstyle{n-k}$};
\node at (.3,0)[right]{$\scriptstyle{n-k}$};
\node at (0,.5)[right]{$\scriptstyle{k}$};
\node at (0,.5)[left]{$\scriptstyle{k}$};
\node at (0,-.5)[right]{$\scriptstyle{k}$};
\node at (0,-.5)[left]{$\scriptstyle{k}$};
}\,\Bigg\rangle_{\! 3}
$,
\item 
$\displaystyle
\Bigg\langle\,\tikz[baseline=-.6ex]{
\draw (-.4,-.4) -- +(-.2,0);
\draw[->-=.8, white, double=black, double distance=0.4pt, ultra thick] 
(-.4,-.4) to[out=east, in=west] (.4,.4);
\draw (.4,.4) -- +(.2,0);
\draw (-.4,.4) -- +(-.2,0);
\draw[->-=.8, white, double=black, double distance=0.4pt, ultra thick] 
(.4,-.4) to[out=west, in=east] (-.4,.4);
\draw (.4,-.4) -- +(.2,0);
\draw[fill=white] (.4,-.6) rectangle +(.1,.4);
\draw[fill=white] (-.4,-.6) rectangle +(-.1,.4);
\draw[fill=white] (.4,.6) rectangle +(.1,-.4);
\draw[fill=white] (-.4,.6) rectangle +(-.1,-.4);
\node at (.4,-.6)[right]{$\scriptstyle{n}$};
\node at (-.4,-.6)[left]{$\scriptstyle{n}$};
\node at (.4,.6)[right]{$\scriptstyle{n}$};
\node at (-.4,.6)[left]{$\scriptstyle{n}$};
}\,\Bigg\rangle_{\! 3}
=\sum_{k=0}^{n} (-1)^{k}q^{\frac{-2n^2+3k^2}{6}}{n\choose k}_q
\Bigg\langle\,\tikz[baseline=-.6ex]{
\draw 
(-.5,.4) -- +(-.2,0)
(.5,-.4) -- +(.2,0)
(-.5,-.4) -- +(-.2,0)
(.5,.4) -- +(.2,0);
\draw[-<-=.5] (-.5,.3) to[out=east, in=east] (-.5,-.3);
\draw[-<-=.5] (.5,.3) to[out=west, in=west] (.5,-.3);
\draw[-<-=.5] (-.5,.5) to[out=east, in=north west] (.0,.0);
\draw[-<-=.5] (.0,.0) to[out=south east, in=west] (.5,-.5);
\draw[-<-=.5] (.5,.5) to[out=west, in=north east] (.0,.0);
\draw[-<-=.5] (.0,.0) to[out=south west, in=east] (-.5,-.5);
\draw[fill=white] (.2,0) -- (0,.2) -- (-.2,0) -- (0,-.2) -- cycle;
\draw (-.2,0) -- (.2,0);
\draw[fill=white] (.5,-.6) rectangle +(.1,.4);
\draw[fill=white] (-.5,-.6) rectangle +(-.1,.4);
\draw[fill=white] (.5,.6) rectangle +(.1,-.4);
\draw[fill=white] (-.5,.6) rectangle +(-.1,-.4);
\node at (.5,-.6)[right]{$\scriptstyle{n}$};
\node at (-.5,-.6)[left]{$\scriptstyle{n}$};
\node at (.5,.6)[right]{$\scriptstyle{n}$};
\node at (-.5,.6)[left]{$\scriptstyle{n}$};
\node at (-.3,0)[left]{$\scriptstyle{n-k}$};
\node at (.3,0)[right]{$\scriptstyle{n-k}$};
\node at (0,.5)[right]{$\scriptstyle{k}$};
\node at (0,.5)[left]{$\scriptstyle{k}$};
\node at (0,-.5)[right]{$\scriptstyle{k}$};
\node at (0,-.5)[left]{$\scriptstyle{k}$};
}\,\Bigg\rangle_{\! 3}
$,
\item
$\displaystyle
\Bigg\langle\,\tikz[baseline=-.6ex]{
\draw 
(-.4,.4) -- +(-.2,0)
(-.4,-.4) -- +(-.2,0);
\draw[-<-=.5] (-.4,.4) to[out=east, in=north west] (0,.3);
\draw[->-=.7] (0,.3) -- (.4,.3);
\draw[->-=.4, ->-=.8] (0,.3) -- (0,-.3);
\draw[-<-=.7] (0,-.3) -- (.4,-.3);
\draw[->-=.5] (-.4,-.4) to[out=east, in=south west] (0,-.3);
\node at (.4,-.4)[below right]{$\scriptstyle{n}$};
\node at (-.4,-.6)[left]{$\scriptstyle{n}$};
\node at (.4,.4)[above right]{$\scriptstyle{n}$};
\node at (-.4,.6)[left]{$\scriptstyle{n}$};
\draw[fill=white] (-.2,.2) -- (.1,.2) -- (.1,.5) -- cycle;
\draw[fill=white] (-.2,-.2) -- (.1,-.2) -- (.1,-.5) -- cycle;
\begin{scope}[xshift=.9cm]
\draw 
(.4,-.4) -- +(.2,0)
(.4,.4) -- +(.2,0);
\draw[->-=.5] (.4,.4) to[out=east, in=north west] (0,.3);
\draw[-<-=.7] (0,.3) -- (-.4,.3);
\draw[-<-=.3, -<-=.7] (0,.3) -- (0,-.3);
\draw[->-=.7] (0,-.3) -- (-.4,-.3);
\draw[-<-=.5] (.4,-.4) to[out=east, in=south west] (0,-.3);
\draw[fill=white] (.4,-.6) rectangle +(.1,.4);
\draw[fill=white] (.4,.6) rectangle +(.1,-.4);
\node at (.4,-.6)[right]{$\scriptstyle{n}$};
\node at (.4,.6)[right]{$\scriptstyle{n}$};
\draw[fill=white] (.2,.2) -- (-.1,.2) -- (-.1,.5) -- cycle;
\draw[fill=white] (.2,-.2) -- (-.1,-.2) -- (-.1,-.5) -- cycle;
\draw[fill=white] (.2,-.05) rectangle (-.1,.05);
\end{scope}
\draw[fill=white] (.4,-.5) rectangle +(.1,.3);
\draw[fill=white] (-.4,-.6) rectangle +(-.1,.4);
\draw[fill=white] (.4,.5) rectangle +(.1,-.3);
\draw[fill=white] (-.4,.6) rectangle +(-.1,-.4);
\draw[fill=white] (-.2,-.05) rectangle (.1,.05);
}\,\Bigg\rangle_{\! 3}
=\sum_{k=0}^{n}
\Bigg\langle\,\tikz[baseline=-.6ex]{
\draw 
(-.4,.4) -- +(-.2,0)
(.4,-.4) -- +(.2,0)
(-.4,-.4) -- +(-.2,0)
(.4,.4) -- +(.2,0);
\draw[-<-=.5] (-.4,.5) -- (.4,.5);
\draw[->-=.5] (-.4,-.5) -- (.4,-.5);
\draw[-<-=.5] (-.4,.3) to[out=east, in=east] (-.4,-.3);
\draw[->-=.5] (.4,.3) to[out=west, in=west] (.4,-.3);
\draw[fill=white] (.4,-.6) rectangle +(.1,.4);
\draw[fill=white] (-.4,-.6) rectangle +(-.1,.4);
\draw[fill=white] (.4,.6) rectangle +(.1,-.4);
\draw[fill=white] (-.4,.6) rectangle +(-.1,-.4);
\node at (.4,-.6)[right]{$\scriptstyle{n}$};
\node at (-.4,-.6)[left]{$\scriptstyle{n}$};
\node at (.4,.6)[right]{$\scriptstyle{n}$};
\node at (-.4,.6)[left]{$\scriptstyle{n}$};
\node at (0,.5)[above]{$\scriptstyle{n-k}$};
\node at (0,-.5)[below]{$\scriptstyle{n-k}$};
\node at (-.2,0)[left]{$\scriptstyle{k}$};
\node at (.2,0)[right]{$\scriptstyle{k}$};
}\,\Bigg\rangle_{\! 3}
$,
\item
$\Big\langle\,\tikz[baseline=-.6ex]{
\draw[->-=.7] (-.9,0) -- (-.4,0);
\draw[-<-=.7] (.9,0) -- (.4,0);
\draw[-<-=.7] (-.4,0) to[out=north, in=west] (0,.25);
\draw[->-=.7] (.4,0) to[out=north, in=east] (0,.25);
\draw[-<-=.7] (-.4,0) to[out=south, in=west] (0,-.25);
\draw[->-=.7] (.4,0) to[out=south, in=east] (0,-.25);
\draw[fill=white] (-.05,.1) rectangle (.05,.4);
\draw[fill=white] (-.05,-.1) rectangle (.05,-.4);
\draw[fill=white, xshift=-.4cm] (0:.2) -- (120:.2) -- (240:.2) -- cycle;
\draw[fill=white, xshift=.4cm] (60:.2) -- (180:.2) -- (300:.2) -- cycle;
\draw[fill=white] (-.8,-.2) rectangle (-.7,.2);
\draw[fill=white] (.8,-.2) rectangle (.7,.2);
\node at (0,.2)[above left]{$\scriptstyle{n}$};
\node at (0,-.2)[below left]{$\scriptstyle{n}$};
\node at (-.4,0)[above left]{$\scriptstyle{n}$};
\node at (.4,0)[above right]{$\scriptstyle{n}$};
}\,\Big\rangle_{\! 3}
=
\left[n+1\right]\Big\langle\,\tikz[baseline=-.6ex]{
\draw[->-=.3, ->-=.8] (-.5,0) -- (.5,0);
\draw[fill=white] (-.05,-.2) rectangle (.05,.2);
\node at (0,0)[above right]{$\scriptstyle{n}$};
}\,\Big\rangle_{\! 3}
$,
\item
$\displaystyle\Big\langle\,\tikz[baseline=-.6ex]{
\draw[->-=.5] (0,0) circle [radius=.3];
\draw[fill=white] (.1,-.05) rectangle (.5,.05);
\node at (.3,0)[above right]{$\scriptstyle{n}$};
}\,\Big\rangle_{\! 3}
=\frac{\left[n+1\right]\left[n+2\right]}{\left[2\right]} \emptyset$.
\end{enumerate}
\end{THM}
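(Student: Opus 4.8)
The plan is to prove all five identities of Theorem~\ref{coloredA2} by induction on $n$, taking the defining relations of the $A_2$ bracket in Definition~\ref{A2def} as the base case $n=1$ and using the recursive definition of the $A_2$ clasp together with the absorption and annihilation properties in Lemma~\ref{A2claspprop} and Lemma~\ref{doubleA2claspprop}. These are the $A_2$ analogues of the Jones--Wenzl crossing expansion, and the conceptual point behind (1)--(3) is that the clasped webs appearing on each right-hand side, indexed by $k$, form a basis of the relevant web space: they are the skein-theoretic projections onto the irreducible summands of the two $n$-cabled strands. Hence the content is entirely in determining the scalar coefficients. I would first dispose of the two \emph{evaluation} identities (4) and (5), since their values feed into the coefficient computations for (1)--(3).

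For (5) I would compute the $n$-colored loop by induction using the recursive clasp definition: expanding the outermost strand of the clasp produces the $(n-1)$-colored loop together with correction terms that are either killed or reduced to lower loops by the clasp annihilation property of Lemma~\ref{A2claspprop}, yielding a recursion in $n$ that solves to $[n+1][n+2]/[2]$. Identity (4) is a bubble/digon reduction: I would open one of the two trivalent junctions using the clasp definition, apply Lemma~\ref{A2claspprop} so that only the top term of the expansion survives, and evaluate the resulting theta-like web using (5); the surviving factor is $[n+1]$. These two steps also establish the orthogonality needed below, namely that composing two right-hand-side webs of (1) with distinct indices $k\neq k'$ vanishes by Lemma~\ref{doubleA2claspprop}, while the diagonal composition evaluates to an explicit nonzero closed web.

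The heart is (1) and (2). For $n=1$ the statement is exactly the first defining relation of Definition~\ref{A2def}: the $k=0$ term reproduces the $q^{1/3}$-weighted uncrossing and the $k=1$ term the $-q^{-1/6}$-weighted trivalent term. For the inductive step I would peel one strand off each $n$-cable via the recursive clasp definition, apply the $n=1$ crossing relation at the freed strand, invoke the induction hypothesis on the remaining $(n-1)$-colored crossing, and then recombine. By the orthogonality just established, the coefficient of the $k$-indexed web is isolated by closing both sides against the dual web; skein-theoretically this coefficient is the eigenvalue of the positive crossing on the $k$-th summand, which I would evaluate using the framing and twist formulas of Lemma~\ref{A2clasplem}~(1) and Lemma~\ref{twistcoeff}. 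Tracking the resulting powers of $q^{1/3}$ and $q^{-1/6}$ through the Pascal-type recursion produces the $q$-binomial ${n\choose k}_q$ and the quadratic exponent $q^{(2n^2-6nk+3k^2)/6}$, respectively $q^{(-2n^2+3k^2)/6}$ for (2). Identity (3) is the resolution of the identity on the cabled strands, with all coefficients equal to $1$, and follows from the same expansion or by capping appropriate strands in (1)--(2) and collapsing with Lemma~\ref{A2claspprop}. The main obstacle is the coefficient bookkeeping in this inductive step: verifying that all off-diagonal webs vanish through Lemma~\ref{doubleA2claspprop} and that the accumulated $q$-exponents reorganize precisely into the stated quadratic form and $q$-binomial is the delicate, genuinely computational part, while the vanishing of the cross terms is exactly where the clasp annihilation property is indispensable.
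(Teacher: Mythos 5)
A preliminary remark: this paper contains no proof of Theorem~\ref{coloredA2} for you to be compared against. The theorem is imported verbatim from \cite[Theorem~3.11]{Yuasa17} and is used here purely as an input (for instance, the proof of Lemma~\ref{twistcoeff} invokes it). So your proposal must stand on its own, and it does not, because the step carrying all the weight in your treatment of (1)--(3) is false. You assert that the right-hand-side webs, call them $e_0,\dots,e_n$, are ``the skein-theoretic projections onto the irreducible summands'' and that $e_k\circ e_{k'}=0$ for $k\neq k'$ by Lemma~\ref{doubleA2claspprop}. They are not orthogonal: the $k=0$ web is the clasped identity (each cable runs straight through its corner clasps), so by the absorption property of Lemma~\ref{A2claspprop} one has $e_0\circ e_k=e_k\neq 0$ for every $k$; and already for $n=1$, where $e_1$ is the $H$-web, $e_1\circ e_1=[2]\,e_1$. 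The family $\{e_k\}$ is only \emph{triangular} with respect to the isotypic decomposition of $V_{n\omega_1}\otimes V_{n\omega_1}$, not orthogonal, and Lemma~\ref{doubleA2claspprop} cannot rescue this: the central diamond in $e_k$ is not the $A_2$ clasp of type $(k,k)$ of Definition~\ref{doubleA2clasp}, but the planar $H$-ladder ``crossing substitute'' introduced in the graphical-notation definition that follows Lemma~\ref{doubleA2claspprop}, to which the clasp annihilation property does not apply. Consequently your coefficient extraction --- closing both sides against a dual web --- isolates nothing.

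The further claim that the coefficient of $e_k$ ``is the eigenvalue of the positive crossing on the $k$-th summand'' is also false, and $n=1$ exhibits both errors at once: the eigenvalues of the crossing of (1) are $q^{1/3}$ on $V_{2\omega_1}$ and $-q^{-2/3}$ on $V_{\omega_2}$, while the theorem's coefficients are $q^{1/3}$ and $-q^{-1/6}$; in the triangular basis one has $c_1=(\lambda_1-\lambda_0)/[2]$, not $c_1=\lambda_1$. For general $n,k$ the coefficients are alternating combinations of eigenvalues weighted by theta-type norm evaluations, and it is exactly there --- not in a ``Pascal-type recursion'' --- that the factor $(-1)^kq^{(2n^2-6nk+3k^2)/6}\binom{n}{k}_q$ would have to be produced; your proposal computes none of these quantities. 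The inductive route you interleave with this does not close either: peeling one strand off each $n$-cable leaves mixed $(n-1)\times 1$ crossings that are covered neither by the induction hypothesis nor by the $n=1$ relation, and rewriting the resulting webs in the basis $\{e_k\}$ is precisely the work hidden in the word ``recombine.'' Your reductions of (5) and (4) via the recursive clasp definition, and your identification of the $n=1$ case of (1) with Definition~\ref{A2def}, are sound; but for the core identities (1)--(3) the proof is absent, not merely unpolished.
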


\begin{THM}[{\cite[Theorem~3.17]{Yuasa17}}]\label{A2mfull}
\begin{align*}
\Bigg\langle\,\tikz[baseline=-.6ex]{
\begin{scope}[xshift=-1cm]
\draw 
(-.5,.4) -- +(-.2,0)
(-.5,-.4) -- +(-.2,0);
\draw[->-=1, white, double=black, double distance=0.4pt, ultra thick] 
(-.5,-.4) to[out=east, in=west] (.0,.4);
\draw[-<-=1, white, double=black, double distance=0.4pt, ultra thick] 
(-.5,.4) to[out=east, in=west] (.0,-.4);
\draw[white, double=black, double distance=0.4pt, ultra thick] 
(0,-.4) to[out=east, in=west] (.5,.4);
\draw[white, double=black, double distance=0.4pt, ultra thick] 
(0,.4) to[out=east, in=west] (.5,-.4);
\draw[fill=white] (-.5,-.6) rectangle +(-.1,.4);
\draw[fill=white] (-.5,.6) rectangle +(-.1,-.4);
\node at (-.5,-.6)[left]{$\scriptstyle{n}$};
\node at (-.5,.6)[left]{$\scriptstyle{n}$};
\end{scope}
\node at (.0,.0){$\cdots$};
\node at (.0,-.4)[below]{$\scriptstyle{l\text{ full twists}}$};
\begin{scope}[xshift=1cm]
\draw
(.5,-.4) -- +(.2,0)
(.5,.4) -- +(.2,0);
\draw[->-=1, white, double=black, double distance=0.4pt, ultra thick] 
(-.5,-.4) to[out=east, in=west] (.0,.4);
\draw[-<-=1, white, double=black, double distance=0.4pt, ultra thick] 
(-.5,.4) to[out=east, in=west] (.0,-.4);
\draw[white, double=black, double distance=0.4pt, ultra thick] 
(0,-.4) to[out=east, in=west] (.5,.4);
\draw[white, double=black, double distance=0.4pt, ultra thick] 
(0,.4) to[out=east, in=west] (.5,-.4);
\draw[fill=white] (.5,-.6) rectangle +(.1,.4);
\draw[fill=white] (.5,.6) rectangle +(.1,-.4);
\node at (.5,-.6)[right]{$\scriptstyle{n}$};
\node at (.5,.6)[right]{$\scriptstyle{n}$};
\end{scope}
}\,\Bigg\rangle_{\! 3}
&=q^{-\frac{2l}{3}(n^2+3n)}
\sum_{0\leq k_l\leq \cdots\leq k_1\leq n}
q^{n-k_l}
q^{\sum_{i=1}^{l}(k_i^2+2k_i)}\\
&\qquad\times\frac{(q)_n}{(q)_{k_l}}
{n \choose k_1',k_2',\dots,k_l',k_l}_{q}
\Bigg\langle\,\tikz[baseline=-.6ex]{
\draw
(-.4,.4) -- +(-.2,0)
(.4,-.4) -- +(.2,0)
(-.4,-.4) -- +(-.2,0)
(.4,.4) -- +(.2,0);
\draw[-<-=.5] (-.4,.5) -- (.4,.5);
\draw[->-=.5] (-.4,-.5) -- (.4,-.5);
\draw[-<-=.5] (-.4,.3) to[out=east, in=east] (-.4,-.3);
\draw[->-=.5] (.4,.3) to[out=west, in=west] (.4,-.3);
\draw[fill=white] (.4,-.6) rectangle +(.1,.4);
\draw[fill=white] (-.4,-.6) rectangle +(-.1,.4);
\draw[fill=white] (.4,.6) rectangle +(.1,-.4);
\draw[fill=white] (-.4,.6) rectangle +(-.1,-.4);
\node at (.4,-.6)[right]{$\scriptstyle{n}$};
\node at (-.4,-.6)[left]{$\scriptstyle{n}$};
\node at (.4,.6)[right]{$\scriptstyle{n}$};
\node at (-.4,.6)[left]{$\scriptstyle{n}$};
\node at (0,.5)[above]{$\scriptstyle{k_l}$};
\node at (0,-.5)[below]{$\scriptstyle{k_l}$};
\node at (-.2,0)[left]{$\scriptstyle{n-k_l}$};
\node at (.2,0)[right]{$\scriptstyle{n-k_l}$};
}\,\Bigg\rangle_{\! 3},
\end{align*}
where $k_i, k_i'$ are integers such that $k_0=n$, $k_{i+1}'=k_i-k_{i+1}$ for $i=0,1,\dots,l-1$.
\end{THM}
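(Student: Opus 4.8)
The plan is to argue by induction on the number $l$ of full twists, peeling off the rightmost full twist at each step and absorbing it into the already-computed inner part. Throughout, the only moves I use are: resolving crossings by the colored $A_2$ skein relation of Theorem~\ref{coloredA2}~(1), fusing parallel clasped strands by the coefficient-free formula of Theorem~\ref{coloredA2}~(3), reading off twist and framing eigenvalues from Lemma~\ref{A2clasplem}~(1) and Lemma~\ref{twistcoeff}, collapsing the resulting bigons and loops by Lemma~\ref{A2clasplem}~(2),(3), and discarding vanishing terms via the clasp-absorption identities of Lemma~\ref{A2claspprop}~(2) and Lemma~\ref{doubleA2claspprop}. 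The prefactor $q^{-\frac{2l}{3}(n^2+3n)}$ will accrue one factor $q^{-\frac{2}{3}(n^2+3n)}$ per full twist, consistent with the framing contribution $q^{\frac{n^2+3n}{3}}$ of Lemma~\ref{A2clasplem}~(3) for the strands of a single twist.

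First I would treat the base case $l=1$. A single full twist consists of two crossings of the two $n$-clasped strands. Resolving both crossings by Theorem~\ref{coloredA2}~(1) and then fusing the two resolutions by Theorem~\ref{coloredA2}~(3) collapses the resulting double sum onto a single index $k$; in particular the two signs combine to $(-1)^{2k}=1$, which explains why the final formula carries no sign. Collapsing the internal loops by Lemma~\ref{A2clasplem}~(2) and reading off the twist and framing contributions from Lemma~\ref{A2clasplem}~(1) and Lemma~\ref{twistcoeff} then yields, after re-indexing $k=k_1$, the coefficient $q^{-\frac{2}{3}(n^2+3n)}q^{n-k_1}q^{k_1^2+2k_1}\frac{(q)_n}{(q)_{k_1}}{n\choose n-k_1,k_1}_q$ of the double-clasp web (top and bottom colored $n-k_1$, the two caps colored $k_1$, in the notation of the right-hand side of Theorem~\ref{coloredA2}~(3)).

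For the inductive step I would first insert an $n$-clasp on each of the two strands between the $(l-1)$st and the $l$th full twist; this is legitimate by the idempotency of Lemma~\ref{A2claspprop}~(1), and it splits the diagram into an inner block of $l-1$ twists, now clasped on both ends, and one outer full twist. Applying the inductive hypothesis to the inner block writes it as $\sum_{0\le k_{l-1}\le\cdots\le k_1\le n}(\cdots)$ times a double-clasp web with caps colored $k_{l-1}$, and it remains to push the outer twist through each such web. Resolving the outer twist by Theorem~\ref{coloredA2}~(1) and fusing the result onto the inner web by Theorem~\ref{coloredA2}~(3) produces a new family indexed by $k_l$; the crucial point is that Lemma~\ref{A2claspprop}~(2) and Lemma~\ref{doubleA2claspprop} force every term with $k_l>k_{l-1}$ to vanish, which is exactly what produces the nested constraint $k_l\le k_{l-1}$ in the summation range.

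The step I expect to be the main obstacle is the coefficient bookkeeping in the inductive step: I must check that multiplying the inductive coefficient by the single-twist transition scalar and summing reproduces the stated coefficient with the $q$-multinomial ${n\choose k_1',\dots,k_l',k_l}_q$. Writing $k_l'=k_{l-1}-k_l$, this reduces to the multinomial factorization $\frac{(q)_{k_{l-1}}}{(q)_{k_l'}(q)_{k_l}}\,{n\choose k_1',\dots,k_{l-1}',k_{l-1}}_q={n\choose k_1',\dots,k_l',k_l}_q$, an instance of iterated $q$-Pascal / $q$-Chu--Vandermonde, together with the requirement that the quadratic $q$-powers $q^{k_{l-1}-k_l+k_l^2+2k_l}$ and the framing factor $q^{-\frac{2}{3}(n^2+3n)}$ telescope into precisely the exponents $q^{n-k_l}q^{\sum_{i=1}^{l}(k_i^2+2k_i)}$ of the theorem. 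Verifying that the quadratic exponents telescope correctly is the delicate part; once this single-twist transition lemma is pinned down, the general formula follows immediately by telescoping the product of transition coefficients.
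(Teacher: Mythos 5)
You should know at the outset that this paper contains no proof of Theorem~\ref{A2mfull} to compare against: it is imported verbatim from \cite[Theorem~3.17]{Yuasa17} and is only \emph{used} here (in Sect.~5), so your proposal has to be judged as a free-standing reconstruction. Your skeleton --- induction on $l$, a triangular one-twist transition, and telescoping of transition coefficients into the $q$-multinomial --- is the right shape; indeed a nested sum over $0\le k_l\le\cdots\le k_1\le n$ weighted by ${n\choose k_1',\dots,k_l',k_l}_q$ can hardly arise any other way.

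The genuine gaps are at the two points where the skein-theoretic work actually happens, and at both points the tools you name do not do what you ask of them. \textbf{Base case:} Theorem~\ref{coloredA2}~(1) (and (2)) as reproduced here expands a crossing of two \emph{parallel}-oriented clasped strands, whereas the twist region of Theorem~\ref{A2mfull} is a full twist of \emph{anti-parallel} strands, so (1) does not even apply as stated; more seriously, Theorem~\ref{coloredA2}~(3) is a single fixed web identity with unit coefficients, not a rule for composing two independently indexed crossing resolutions. That composition is governed by the bubble formula, Theorem~\ref{A2bubble}, which replaces a stacked pair of basis webs indexed $k,k'$ by a sum over a \emph{third} index $t$ with $\max\{k,k'\}\le t\le\min\{k+k',n\}$: the transition is triangular, never diagonal, so your claimed collapse onto a single index with signs cancelling as $(-1)^{2k}$ does not occur, and the sign-free final formula can only emerge from an alternating $q$-binomial summation that your outline never performs. \textbf{Inductive step:} the multinomial factorization you verify is true but trivial (it is the definition of the $q$-multinomial); the content is the one-twist transition lemma, namely that a single full twist composed with the basis web indexed by $k_{l-1}$ equals $q^{-\frac{2}{3}\left[(n^2+3n)-(k_{l-1}^2+3k_{l-1})\right]}$ times the base-case expansion taken at color $k_{l-1}$ (the $(n-k_{l-1})$-colored returning strands unwind via Lemma~\ref{A2clasplem}~(3) and Lemma~\ref{twistcoeff}, leaving a residual full twist on the $k_{l-1}$-colored strands), and this lemma --- equivalently, the evaluation of the sum over your intermediate expansion index --- is exactly the step you defer. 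Note also the indexing: reading the basis webs of Theorem~\ref{A2mfull} from left to right, $k=n$ is the identity-type web and $k=0$ is the cap--cup web, and composition can only \emph{lower} the index; your appeal to Lemma~\ref{A2claspprop}~(2) and Lemma~\ref{doubleA2claspprop} for killing $k_l>k_{l-1}$ terms is the right kind of ingredient, but by itself it produces neither this range nor the coefficients.
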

\begin{THM}{\cite[Theorem~4.2]{Yuasa17}}\label{A2bubble}
\[
\Bigg\langle\,\tikz[baseline=-.6ex, scale=0.8]{
\draw (-.4,.5) -- +(-.2,0);
\draw (.4,-.5) -- +(.2,0);
\draw (-.4,-.5) -- +(-.2,0);
\draw (.4,.5) -- +(.2,0);
\draw[-<-=.5] (-.4,.5) -- (0,.5);
\draw[-<-=.5] (0,.5) -- (.4,.5);
\draw[->-=.5] (-.4,-.5) -- (0,-.5);
\draw[->-=.5] (0,-.5) -- (.4,-.5);
\draw[-<-=.5] (.05,.3) to[out=east, in=east] (.05,.-.3);
\draw[->-=.5] (-.05,.3) to[out=west, in=west] (-.05,-.3);
\draw[fill=white] (-.4,.3) rectangle +(-.1,.3);
\draw[fill=white] (.4,.3) rectangle +(.1,.3);
\draw[fill=white] (-.4,-.3) rectangle +(-.1,-.3);
\draw[fill=white] (.4,-.3) rectangle +(.1,-.3);
\draw[fill=white] (-.05,.2) rectangle +(.1,.4);
\draw[fill=white] (-.05,-.2) rectangle +(.1,-.4);
\node at (.4,-.5)[below right]{$\scriptstyle{m-l}$};
\node at (-.4,-.5)[below left]{$\scriptstyle{m-k}$};
\node at (.4,.5)[above right]{$\scriptstyle{n-l}$};
\node at (-.4,.5)[above left]{$\scriptstyle{n-k}$};
\node at (-.2,0)[left]{$\scriptstyle{k}$};
\node at (.2,0)[right]{$\scriptstyle{l}$};
\node at (0,-.6)[below]{$\scriptstyle{m}$};
\node at (0,.6)[above]{$\scriptstyle{n}$};
}\,\Bigg\rangle_{\! 3}
=
\sum_{t=\max\{k, l\}}^{\min\{k+l, n, m\}}
\frac{{n\brack t}{m\brack t}{t\brack k}{t\brack l}{n+m-t+2\brack n+m-k-l+2}}{{n\brack k}{m\brack k}{n\brack l}{m\brack l}}
\Bigg\langle\,\tikz[baseline=-.6ex, scale=0.8]{
\draw (-.4,.4) -- +(-.2,0);
\draw (.4,-.4) -- +(.2,0);
\draw (-.4,-.4) -- +(-.2,0);
\draw (.4,.4) -- +(.2,0);
\draw[-<-=.5] (-.4,.5) -- (.4,.5);
\draw[->-=.5] (-.4,-.5)  -- (.4,-.5);
\draw[-<-=.5] (-.4,.3) to[out=east, in=east] (-.4,.-.3);
\draw[->-=.5] (.4,.3) to[out=west, in=west] (.4,-.3);
\draw[fill=white] (-.4,.2) rectangle +(-.1,.4);
\draw[fill=white] (.4,.2) rectangle +(.1,.4);
\draw[fill=white] (-.4,-.2) rectangle +(-.1,-.4);
\draw[fill=white] (.4,-.2) rectangle +(.1,-.4);
\node at (.4,-.6)[right]{$\scriptstyle{m-l}$};
\node at (-.4,-.6)[left]{$\scriptstyle{m-k}$};
\node at (.4,.6)[right]{$\scriptstyle{n-l}$};
\node at (-.4,.6)[left]{$\scriptstyle{n-k}$};
\node at (-.2,0)[left]{$\scriptstyle{t-k}$};
\node at (.2,0)[right]{$\scriptstyle{t-l}$};
\node at (0,-.5)[below]{$\scriptstyle{m-t}$};
\node at (0,.5)[above]{$\scriptstyle{n-t}$};
}\,\Bigg\rangle_{\! 3}
\]
\end{THM}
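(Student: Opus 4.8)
The plan is to read the identity as an equation in the $A_2$ web space $W_\varepsilon$ whose boundary is that of the square: the four corner clasps of colors $n-k,\,n-l,\,m-k,\,m-l$ are common to both sides, so what must really be shown is that the ``double--clasp bubble'' on the left---the two horizontal clasps of colors $n$ and $m$ joined by a $k$--bundle on the left and an $l$--bundle on the right---expands as the stated combination of the single--rung squares appearing on the right (rungs $n-t,\,m-t$ and side arcs $t-k,\,t-l$). First I would record that, by the annihilation properties of the clasps (Lemma~\ref{A2claspprop} together with Lemma~\ref{doubleA2claspprop}), any web obtained by feeding a turnback into one of the clasps vanishes; combined with the non--elliptic (six--sided face) condition defining $A_2$ basis webs, this forces every surviving web in any expansion of the bubble to be one of these single--rung squares, so the only genuine task is to pin down the coefficients.

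To compute them I would induct on $k+l$ over the whole family of such identities. The base case $k=l=0$ is immediate: the two connecting arcs disappear, each central clasp composes with its neighbouring corner clasps into a single clasp by Lemma~\ref{A2claspprop}, and both sides collapse to the disjoint union of an $n$--labelled and an $m$--labelled rung (the sum reduces to its $t=0$ term, whose coefficient is $1$). For the inductive step I would peel one strand off, say, the $k$--bundle by means of the recursive definition of the clasp, equivalently by resolving one internal strand through the colored $A_2$ skein relation of Theorem~\ref{coloredA2}. The leading term reproduces a bubble with $k$ lowered by one and the boundary colors shifted accordingly, to which the inductive hypothesis applies; each correction term creates a small bigon or turnback adjacent to a clasp, which evaluates to a ratio of quantum integers by Lemma~\ref{A2clasplem}~(2) and is then reabsorbed using the merging moves of Lemma~\ref{coloredvertex}. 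Collecting the contributions reorganizes the coefficients, and matching them against the claimed closed form reduces the step to a single $q$--binomial summation identity of Pfaff--Saalsch\"utz type.

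The main obstacle I expect is twofold. First, one must check that the clasp--absorption prunes the expansion down to \emph{exactly} the claimed single--rung squares, with no surviving non--ladder faces; this is where the vanishing lemmas and the basis condition must be used in tandem, and where sign and orientation bookkeeping is easy to mishandle. Second, and more seriously, there is the control of the accumulated quantum--integer factors and the verification of the resulting $q$--binomial identity, which is the genuinely computational heart of the argument.

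As an alternative that avoids the induction and isolates each coefficient directly, I would instead cap both sides with the reflected square $L_{t'}^{\vee}$ and evaluate the resulting closed $A_2$ nets. Since each single--rung square has exactly one internal rung, these evaluations are chains of bigons together with theta-- and tetrahedron--shaped nets, all computable by repeated use of Lemma~\ref{A2clasplem} and the vertex moves of Lemma~\ref{coloredvertex}; the target coefficient then emerges as a ratio of theta--net values, which are themselves products of quantum binomials, reproducing the formula. The price of this route is that the pairing of the family $\{L_t\}$ need not be diagonal, so one must establish its triangularity in $t$ (or invert a small quantum matrix) before reading off the coefficients---again reducing, in the end, to the same class of $q$--binomial identities, and paralleling the coefficient structure already visible in Definition~\ref{doubleA2clasp}.
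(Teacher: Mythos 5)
First, a point of comparison that matters for your whole exercise: this paper does not prove Theorem~\ref{A2bubble} at all. It is imported as a black box, with the citation \cite[Theorem~4.2]{Yuasa17}, and is only \emph{used} here (in the computation of $\left[\bar{\operatorname{ST}}(1,2l)\right]_{(n,n)}$). So there is no internal proof to measure your argument against; your proposal has to stand as a self-contained reconstruction of the proof in the cited reference.

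Judged that way, your strategy is the right one in outline — fixing the four corner clasps, expanding the central bubble, and determining coefficients by induction on the bubble parameters is the standard route for this kind of formula — but what you have written is a plan rather than a proof, and the two places you yourself flag as "obstacles" are exactly where the content of the theorem lives. First, the claim that clasp annihilation (Lemma~\ref{A2claspprop}, Lemma~\ref{doubleA2claspprop}) plus non-ellipticity forces every surviving web to be a single-rung ladder is not automatic in the $A_2$ setting: unlike the $A_1$ case, non-elliptic $A_2$ webs can carry internal hexagonal faces and trivalent vertices, so reducing to the ladder family is a genuine basis-type argument, not a one-line consequence of the vanishing lemmas. (If you instead run the induction purely through explicit skein moves you can avoid this claim, but then you cannot invoke it to declare that "the only genuine task is to pin down the coefficients.") Second, and decisively, the coefficient
\[
\frac{{n\brack t}{m\brack t}{t\brack k}{t\brack l}{n+m-t+2\brack n+m-k-l+2}}{{n\brack k}{m\brack k}{n\brack l}{m\brack l}}
\]
must be shown to satisfy the recursion your strand-peeling produces, and this verification — which you defer to "a single $q$-binomial summation identity of Pfaff--Saalsch\"utz type" without stating the identity, or checking that the correction terms generated by Lemma~\ref{A2clasplem}~(2) and Lemma~\ref{coloredvertex} actually assemble into it — is the computational heart of the theorem. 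Your alternative pairing argument has the same status: it needs the values of the closed clasped theta/tetrahedron nets and the triangularity of the pairing in $t$, neither of which is established here or anywhere in this paper. In short: plausible architecture, same in spirit as what the cited reference does, but the substance (the ladder-spanning lemma, or else the explicit coefficient recursion and its $q$-binomial resolution) is missing.
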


These formulas work for computations of the one-variable Kauffman-Vogel polynomials for $A_2$. 
As easy examples, 
we compute $\left[\operatorname{ST}(k,l)\right]_{m}^{(m)}$ (see Remark~\ref{general}) and $\left[\bar{\operatorname{ST}}(1,2l)\right]_{(n,n)}$.

\subsection *{A computation of $\left[\operatorname{ST}(k,l)\right]_{m}^{(m)}$}
From Theorem~\ref{coloredA2}~(4) and Lemma~\ref{coloredvertex}~(6), 
\[
\left[\,\tikz[baseline=-.6ex, scale=.1]{
\draw[->] (0,0) -- (2,3) to[out=north east, in=west] (4,4);
\draw[-<-=.8] (0,0) -- (-2,3) to[out=north west, in=east] (-4,4);
\draw[->] (0,0) -- (2,-3) to[out=south east, in=west] (4,-4);
\draw[-<-=.8] (0,0) -- (-2,-3) to[out=south west, in=east] (-4,-4);
\draw[fill=cyan] (0,0) circle [radius=.8];
\begin{scope}[xshift=8cm]
\draw[->-=.8] (0,0) -- (2,3) to[out=north east, in=west] (4,4);
\draw (0,0) -- (-2,3) to[out=north west, in=east] (-4,4);
\draw[->-=.8] (0,0) -- (2,-3) to[out=south east, in=west] (4,-4);
\draw (0,0) -- (-2,-3) to[out=south west, in=east] (-4,-4);
\draw[fill=cyan] (0,0) circle [radius=.8];
\end{scope}
}\,\right]_{m}^{(m)}
=
\Bigg\langle\,
%%%%%%%%%%
\tikz[baseline=-.6ex, scale=.1]{
%%%
\draw[->-=.5] (-5,4) to[out=east, in=north west] (0,0);
\draw[-<-=.5] (0,0) to[out=south east, in=west] (5,-4);
\draw[-<-=.5] (5,4) to[out=west, in=north east] (0,0);
\draw[->-=.5] (0,0) to[out=south west, in=east] (-5,-4);
%%%
\draw[fill=white] (2,0) -- (0,2) -- (-2,0) -- (0,-2) -- cycle;
\draw (0,-2) -- (0,2);
%%%
\draw[fill=white] (5,-6) rectangle +(1,4);
\draw[fill=white] (-5,-6) rectangle +(-1,4);
\draw[fill=white] (5,6) rectangle +(1,-4);
\draw[fill=white] (-5,6) rectangle +(-1,-4);
%%%
\node at (0,5)[right]{$\scriptstyle{m}$};
\node at (0,5)[left]{$\scriptstyle{m}$};
\node at (0,-5)[right]{$\scriptstyle{m}$};
\node at (0,-5)[left]{$\scriptstyle{m}$};
\begin{scope}[xshift=11cm]
%%%
\draw[->-=.5] (-5,4) to[out=east, in=north west] (0,0);
\draw[-<-=.5] (0,0) to[out=south east, in=west] (5,-4);
\draw[-<-=.5] (5,4) to[out=west, in=north east] (0,0);
\draw[->-=.5] (0,0) to[out=south west, in=east] (-5,-4);
%%%
\draw[fill=white] (2,0) -- (0,2) -- (-2,0) -- (0,-2) -- cycle;
\draw (0,-2) -- (0,2);
%%%
\draw[fill=white] (5,-6) rectangle +(1,4);
\draw[fill=white] (5,6) rectangle +(1,-4);
%%%
\node at (0,5)[right]{$\scriptstyle{m}$};
\node at (0,5)[left]{$\scriptstyle{m}$};
\node at (0,-5)[right]{$\scriptstyle{m}$};
\node at (0,-5)[left]{$\scriptstyle{m}$};
\end{scope}
}
%%%%%%%%%%
\,\Bigg\rangle_{\! 3}
=\left[m+1\right]
\Bigg\langle\,
%%%%%%%%%%
\tikz[baseline=-.6ex, scale=.1]{
%%%
\draw[->-=.5] (-5,4) to[out=east, in=north west] (0,0);
\draw[-<-=.5] (0,0) to[out=south east, in=west] (5,-4);
\draw[-<-=.5] (5,4) to[out=west, in=north east] (0,0);
\draw[->-=.5] (0,0) to[out=south west, in=east] (-5,-4);
%%%
\draw[fill=white] (2,0) -- (0,2) -- (-2,0) -- (0,-2) -- cycle;
\draw (0,-2) -- (0,2);
%%%
\draw[fill=white] (5,-6) rectangle +(1,4);
\draw[fill=white] (-5,-6) rectangle +(-1,4);
\draw[fill=white] (5,6) rectangle +(1,-4);
\draw[fill=white] (-5,6) rectangle +(-1,-4);
%%%
\node at (0,5)[right]{$\scriptstyle{m}$};
\node at (0,5)[left]{$\scriptstyle{m}$};
\node at (0,-5)[right]{$\scriptstyle{m}$};
\node at (0,-5)[left]{$\scriptstyle{m}$};
}
%%%%%%%%%%
\,\Bigg\rangle_{\! 3}
=\left[m+1\right]
\left[\,\tikz[baseline=-.6ex, scale=.1]{
\draw[->-=.8] (0,0) -- (2,3) to[out=north east, in=west] (4,4);
\draw[-<-=.8] (0,0) -- (-2,3) to[out=north west, in=east] (-4,4);
\draw[->-=.8] (0,0) -- (2,-3) to[out=south east, in=west] (4,-4);
\draw[-<-=.8] (0,0) -- (-2,-3) to[out=south west, in=east] (-4,-4);
\draw[fill=cyan] (0,0) circle [radius=.8];
}\,\right]_{m}^{(m)}.
\]
We obtain 
$
\left[\operatorname{ST}(k,l)\right]_{m}^{(m)}=\left[m+1\right]^{k-1}\left[\operatorname{ST}(1,l)\right]_{m}^{(m)}
$ . 
By using Lemma~\ref{twistcoeff}~(2) and Lemma~\ref{coloredvertex}~(6), 
$\left[\operatorname{ST}(1,l)\right]_{m}^{(m)}=(-1)^{lm}q^{-\frac{l(m^2+3m)}{6}}\left[m+1\right]$. 
Therefore, 
\[
 \left[\operatorname{ST}(k,l)\right]_{m}^{(m)}=(-1)^{lm}q^{-\frac{l(m^2+3m)}{6}}\left[m+1\right]^k.
\]

\subsection *{A computation of $\left[\bar{\operatorname{ST}}(1,2l)\right]_{(n,n)}$}

We prepare an easy lemma for colored trivalent graphs.
\begin{LEM}\label{change}
For $0\leq i\leq n$,\\
$\bigg\langle\,
%%%%%%%%%%
\tikz[baseline=-.6ex, scale=0.3]{
%%%
\draw[-|-=.5, triple={[line width=1.4pt, white] in [line width=2.2pt, black] in [line width=5.4pt, white]}]
(-1,0) -- (1,0);
\draw[-<-=.5] (-2,1) to[out=east, in=north west] (-1,0);
\draw[->-=.5] (-2,-1) to[out=east, in=south west] (-1,0);
\draw[-<-=.5] (1,0) to[out=north east, in=west] (2,1);
\draw[->-=.5] (1,0) to[out=south east, in=west] (2,-1);
%%%
\draw[fill=black] (-1,0)  circle (.2);
\draw[fill=black] (1,0)  circle (.2);
%%%
\node at (-2,1) [above]{$\scriptstyle{n}$};
\node at (-2,-1) [above]{$\scriptstyle{n}$};
\node at (2,1) [above]{$\scriptstyle{n}$};
\node at (2,-1) [above]{$\scriptstyle{n}$};
\node at (0,0) [above]{$\scriptstyle{i}$};}
%%%%%%%%%%
\,\bigg\rangle_3
=
\bigg\langle\,
%%%%%%%%%%
\tikz[baseline=-.6ex, scale=0.3]{
%%%
\draw[triple={[line width=1.4pt, white] in [line width=2.2pt, black] in [line width=5.4pt, white]}]
(-1,0) -- (1,0);
\draw[-<-=.5] (-2,1) to[out=east, in=north west] (-1,0);
\draw[->-=.5] (-2,-1) to[out=east, in=south west] (-1,0);
\draw[-<-=.5] (1,0) to[out=north east, in=west] (2,1);
\draw[->-=.5] (1,0) to[out=south east, in=west] (2,-1);
%%%
\draw[fill=white] (-1,0)  circle (.2);
\draw[fill=white] (1,0)  circle (.2);
%%%
\node at (-2,1) [above]{$\scriptstyle{n}$};
\node at (-2,-1) [above]{$\scriptstyle{n}$};
\node at (2,1) [above]{$\scriptstyle{n}$};
\node at (2,-1) [above]{$\scriptstyle{n}$};
\node at (0,0) [above]{$\scriptstyle{i}$};}
%%%%%%%%%%
\,\bigg\rangle_3$
\ and\ 
$\bigg\langle\,
%%%%%%%%%%
\tikz[baseline=-.6ex, scale=0.3]{
%%%
\draw[triple={[line width=1.4pt, white] in [line width=2.2pt, black] in [line width=5.4pt, white]}]
(-1,0) -- (1,0);
\draw[->-=.5] (-2,1) to[out=east, in=north west] (-1,0);
\draw[-<-=.5] (-2,-1) to[out=east, in=south west] (-1,0);
\draw[->-=.5] (1,0) to[out=north east, in=west] (2,1);
\draw[-<-=.5] (1,0) to[out=south east, in=west] (2,-1);
%%%
\draw[fill=black] (-1,0)  circle (.2);
\draw[fill=black] (1,0)  circle (.2);
%%%
\node at (-2,1) [above]{$\scriptstyle{n}$};
\node at (-2,-1) [above]{$\scriptstyle{n}$};
\node at (2,1) [above]{$\scriptstyle{n}$};
\node at (2,-1) [above]{$\scriptstyle{n}$};
\node at (0,0) [above]{$\scriptstyle{i}$};}
%%%%%%%%%%
\,\bigg\rangle_3
=
\bigg\langle\,
%%%%%%%%%%
\tikz[baseline=-.6ex, scale=0.3]{
%%%
\draw[-|-=.5, triple={[line width=1.4pt, white] in [line width=2.2pt, black] in [line width=5.4pt, white]}]
(-1,0) -- (1,0);
\draw[->-=.5] (-2,1) to[out=east, in=north west] (-1,0);
\draw[-<-=.5] (-2,-1) to[out=east, in=south west] (-1,0);
\draw[->-=.5] (1,0) to[out=north east, in=west] (2,1);
\draw[-<-=.5] (1,0) to[out=south east, in=west] (2,-1);
%%%
\draw[fill=white] (-1,0)  circle (.2);
\draw[fill=white] (1,0)  circle (.2);
%%%
\node at (-2,1) [above]{$\scriptstyle{n}$};
\node at (-2,-1) [above]{$\scriptstyle{n}$};
\node at (2,1) [above]{$\scriptstyle{n}$};
\node at (2,-1) [above]{$\scriptstyle{n}$};
\node at (0,0) [above]{$\scriptstyle{i}$};}
%%%%%%%%%%
\,\bigg\rangle_3$.
\end{LEM}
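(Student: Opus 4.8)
The plan is to reduce both displayed identities to a single local statement about one trivalent vertex, and then to obtain the second identity from the first by the orientation-reversing symmetry of the $A_2$ bracket. First I would establish the \emph{local claim}: a single black trivalent vertex of color $i$ equals the white trivalent vertex of the same color in which the orientation convention carried by the incident triple edge is switched (the plain convention replaced by the tick-marked one, and conversely). Granting this, the first identity follows by applying the local claim simultaneously to the left and to the right vertex: each black circle is traded for a white circle, and the two reversals act on the \emph{single} shared middle triple edge, so its tag is switched to the opposite (plain) convention of the right-hand side, while the outer $n$-colored legs, whose definitions coincide for the black and white vertices, are untouched. The second identity is then the first one with every strand reversed: under this symmetry white and black vertices are each preserved, whereas the two orientation conventions on a doubled clasp are interchanged, so it suffices to treat the first identity in full.

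To prove the local claim I would unfold the two definitions. The black and white vertices agree except in a neighborhood of the $i$-colored output: in the white case the two bundles of $i$ strands run parallel into the merging box, whereas in the black case they are interchanged by a crossing and the $A_2$ clasp of type $(i,i)$ (the diamond of Definition~\ref{doubleA2clasp}) is inserted. I would first open this diamond by the square-to-vertices relation of Lemma~\ref{coloredvertex}, rewriting it as a pair of ordinary trivalent merges, and then resolve the remaining crossing using the defining $A_2$ skein relations of Definition~\ref{A2def}. After the resolution every term but one is annihilated: each unwanted term contains a turn-back of fewer than $i$ strands into the adjacent $A_2$ clasp, which vanishes by the second property of Lemma~\ref{A2claspprop} (equivalently Lemma~\ref{doubleA2claspprop}). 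The surviving web is exactly the white vertex, but with its two $i$ bundles transposed; since this transposition is precisely what distinguishes the plain from the tick-marked convention on the doubled strand, the surviving web is the white vertex attached to the oppositely tagged triple edge, as claimed.

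The step I expect to be the main obstacle is verifying that no scalar survives. Resolving the crossing of a doubled $i$ strand ordinarily contributes a twist coefficient of the shape appearing in Lemma~\ref{A2clasplem} and Lemma~\ref{twistcoeff}, and for the asserted equality to hold on the nose this coefficient must cancel against the one produced when the type-$(i,i)$ clasp is absorbed into the vertex. I would make the cancellation explicit by carrying the crossing through the symmetric clasp as a single transposition of the two $i$ bundles rather than as a full twist, so that the only net effect is the interchange of the bundles and the overall coefficient is $1$. That a genuinely nontrivial scalar cannot intervene is consistent with the pairing: a factor $C$ per vertex would force $C^2=1$, whereas the twist coefficients of the $A_2$ calculus are nontrivial powers of $q$ up to sign. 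The part requiring the most care is tracking the sink/source labels at each trivalent vertex throughout this absorption, so as to confirm that the output convention flips exactly once; the remaining manipulations are routine applications of the relations in Lemma~\ref{coloredvertex} and Lemma~\ref{A2claspprop}.
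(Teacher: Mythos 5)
Your argument breaks at the ``local claim,'' in two distinct places. First, the claim that no scalar survives is not justified, and your consistency check for it is invalid. The check ``a factor $C$ per vertex would force $C^2=1$'' presumes that the two vertices at the ends of the edge contribute the \emph{same} factor; in fact they contribute \emph{inverse} factors. This is exactly what the paper's proof exhibits: the two diamonds sitting inside the two black vertices carry opposite orientations, and between clasps they are the surviving extreme ($k=i$) terms of a \emph{positive} and a \emph{negative} colored crossing respectively (Theorem~\ref{coloredA2}~(1) and (2), equivalently Lemma~\ref{twistcoeff}~(1)), with coefficients $(-1)^iq^{-i^2/6}$ and $(-1)^iq^{+i^2/6}$. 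So a genuinely nontrivial scalar does intervene at each single vertex, and it cancels only in the pair; there is no per-vertex conversion with coefficient $1$, and the equality you want cannot be localized to one vertex. (Note also that a single black vertex with its tick-marked stub and a single white vertex with its plain stub do not even lie in the same web space --- the two conventions orient the doubled $i$-colored strands oppositely --- so the local claim only becomes meaningful after composing with some identification web, at which point it is no longer a one-vertex statement.)

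Second, even granting a ``transposition'' reading, the step where you switch the tag of the shared middle edge silently uses that exchanging the two $i$-bundles twice is the identity. In the web space this is false: the flat exchange $X$ is not an involution. Already for $i=1$ the square relation of Definition~\ref{A2def} gives $X\circ X=\mathrm{id}+(\text{turnback term})$, and the turnback term is annihilated only by the clasp of type $(i,i)$ sitting in the middle of the edge. Hence the identity is intrinsically a statement about the \emph{composite} of the two vertices through the middle clasp --- which is precisely what the paper computes: the composite of the two oppositely oriented diamonds through the clasp equals $[\text{positive crossing}]\circ[\text{clasp}]\circ[\text{negative crossing}]$ with the inverse coefficients above canceling, and this equals the clasp by expanding it via Definition~\ref{doubleA2clasp} and applying Lemma~\ref{A2clasplem}~(1) and (3). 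Your annihilation mechanism (Lemma~\ref{A2claspprop}, Lemma~\ref{doubleA2claspprop}) is the right ingredient, but it must be deployed on this two-vertex composite; used in the service of a one-vertex conversion it proves nothing, and the proposal as written does not close this gap. (A small separate confusion: the diamond in the black vertex is the clasped flat crossing of Definition~3.7, not the clasp of Definition~\ref{doubleA2clasp}; the $(i,i)$-clasp is the adjacent rectangular box.)
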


\begin{proof}
Thus Lemma follows from 
\[
\Bigg\langle\,
%%%%%%%%%%
\tikz[baseline=-.6ex, scale=.1]{
%%%
\draw[-<-=.5] (-5,4) to[out=east, in=north west] (0,0);
\draw[-<-=.5] (0,0) to[out=south east, in=west] (5,-4);
\draw[-<-=.5] (5,4) to[out=west, in=north east] (0,0);
\draw[-<-=.5] (0,0) to[out=south west, in=east] (-5,-4);
%%%
\draw[fill=white] (2,0) -- (0,2) -- (-2,0) -- (0,-2) -- cycle;
\draw (-2,0) -- (2,0);
%%%
\draw[fill=white] (-5,-6) rectangle +(-1,4);
\draw[fill=white] (-5,6) rectangle +(-1,-4);
\draw[fill=white] (5,-6) rectangle (6,6);
\draw (5,0) -- (6,0);
%%%
\node at (0,5)[right]{$\scriptstyle{i}$};
\node at (0,5)[left]{$\scriptstyle{i}$};
\node at (0,-5)[right]{$\scriptstyle{i}$};
\node at (0,-5)[left]{$\scriptstyle{i}$};
\begin{scope}[xshift=11cm]
%%%
\draw[->-=.5] (-5,4) to[out=east, in=north west] (0,0);
\draw[->-=.5] (0,0) to[out=south east, in=west] (5,-4);
\draw[->-=.5] (5,4) to[out=west, in=north east] (0,0);
\draw[->-=.5] (0,0) to[out=south west, in=east] (-5,-4);
%%%
\draw[fill=white] (2,0) -- (0,2) -- (-2,0) -- (0,-2) -- cycle;
\draw (-2,0) -- (2,0);
%%%
\draw[fill=white] (5,-6) rectangle +(1,4);
\draw[fill=white] (5,6) rectangle +(1,-4);
%%%
\node at (0,5)[right]{$\scriptstyle{i}$};
\node at (0,5)[left]{$\scriptstyle{i}$};
\node at (0,-5)[right]{$\scriptstyle{i}$};
\node at (0,-5)[left]{$\scriptstyle{i}$};
\end{scope}
}
%%%%%%%%%%
\,\Bigg\rangle_{\! 3}
=
\Bigg\langle\,
%%%%%%%%%%
\tikz[baseline=-.6ex, scale=0.1]{
%%%
\draw[->-=.8, white, double=black, double distance=0.4pt, ultra thick] 
(-4,-4) to[out=east, in=west] (4,4);
\draw[->-=.8, white, double=black, double distance=0.4pt, ultra thick] 
(4,-4) to[out=west, in=east] (-4,4);
\draw[fill=white] (-4,-6) rectangle +(-1,4);
\draw[fill=white] (-4,6) rectangle +(-1,-4);
\draw[fill=white] (4,-6) rectangle (5,6);
\draw (4,0) -- (5,0);
%%%
\node at (4,-5)[left]{$\scriptstyle{i}$};
\node at (-4,-5)[right]{$\scriptstyle{i}$};
\node at (4,5)[left]{$\scriptstyle{i}$};
\node at (-4,5)[right]{$\scriptstyle{i}$};
%%%
\begin{scope}[xshift=9cm]
%%%
\draw[-<-=.8, white, double=black, double distance=0.4pt, ultra thick] 
(4,-4) to[out=west, in=east] (-4,4);
\draw[-<-=.8, white, double=black, double distance=0.4pt, ultra thick] 
(-4,-4) to[out=east, in=west] (4,4);
\draw[fill=white] (4,-6) rectangle +(1,4);
\draw[fill=white] (4,6) rectangle +(1,-4);
%%%
\node at (4,-5)[left]{$\scriptstyle{i}$};
\node at (-4,-5)[right]{$\scriptstyle{i}$};
\node at (4,5)[left]{$\scriptstyle{i}$};
\node at (-4,5)[right]{$\scriptstyle{i}$};
%%%
\end{scope}}
%%%%%%%%%%
\,\Bigg\rangle_{\! 3}
=
\Bigg\langle\,
%%%%%%%%%%
\tikz[baseline=-.6ex, scale=.5]{
\draw[-<-=.8] (-.6,.4) -- (.6,.4);
\draw[->-=.8] (-.6,-.4) -- (.6,-.4);
\draw[fill=white] (-.1,-.6) rectangle (.1,.6);
\draw (-.1,.0) -- (.1,.0);
\node at (.4,-.6)[right]{$\scriptstyle{i}$};
\node at (-.4,-.6)[left]{$\scriptstyle{i}$};
\node at (.4,.6)[right]{$\scriptstyle{i}$};
\node at (-.4,.6)[left]{$\scriptstyle{i}$};}
%%%%%%%%%%
\,\Bigg\rangle_{\! 3}.
\]
The first equation is obtained by applying Theorem~\ref{coloredA2}~(1) and (2) to the center tangle. 
We expand the clasp of type $(i,i)$ in the center tangle by using Definition~\ref{doubleA2clasp} and use Lemme~\ref{A2clasplem} (1) and (3).
Thus, 
we obtain the second equation.
\end{proof}

In the same computation to the proof of Theorem~\ref{unoriKVthm}, 
we can see 
\[
\Bigg\langle\,
%%%%%%%%%%
\tikz[baseline=-.6ex, scale=.1]{
%%%
\draw[triple={[line width=1.4pt, white] in [line width=2.2pt, black] in [line width=5.4pt, white]}] 
(3,3) to[out=north east, in=west] (5,4) -- (7,4);
\draw[triple={[line width=1.4pt, white] in [line width=2.2pt, black] in [line width=5.4pt, white]}] 
(3,-3) to[out=south east, in=west] (5,-4) -- (7,-4);
%%%
\draw[triple={[line width=1.4pt, white] in [line width=2.2pt, black] in [line width=5.4pt, white]}] 
(7,-4) to[out=east, in=west] (13,4);
\draw[triple={[line width=1.4pt, white] in [line width=2.2pt, black] in [line width=5.4pt, white]}] 
(7,4) to[out=east, in=west] (13,-4);
%%%
\draw[->-=.5] (3,3) -- (-3,3);
\draw[->-=.5] (-3,-3) -- (3,-3);
\draw[->-=.5] (3,-3) -- (3,3);
%%%
\draw[fill=white] (3,3) circle [radius=.8];
\draw[fill=white] (3,-3) circle [radius=.8];
%%%
\node at (13,-4)[right]{$\scriptstyle{n}$};
\node at (13,4)[right]{$\scriptstyle{n}$};
\node at (0,3)[above]{$\scriptstyle{n}$};
\node at (0,-3)[below]{$\scriptstyle{n}$};
\node at (3,0)[right]{$\scriptstyle{n}$};}
%%%%%%%%%%
\,\Bigg\rangle_{\! 3}
=q^{-\frac{2n^2+3n}{3}}
\Bigg\langle\,
%%%%%%%%%%
\tikz[baseline=-.6ex, scale=.1]{
%%%
\draw[triple={[line width=1.4pt, white] in [line width=2.2pt, black] in [line width=5.4pt, white]}] 
(3,3) to[out=north east, in=west] (5,4) -- (7,4);
\draw[triple={[line width=1.4pt, white] in [line width=2.2pt, black] in [line width=5.4pt, white]}] 
(3,-3) to[out=south east, in=west] (5,-4) -- (7,-4);
%%%
\draw[->-=.3, white, double=black, double distance=0.4pt, ultra thick] 
(-3,-3) to[out=east, in=west] (3,3);
\draw[-<-=.3, white, double=black, double distance=0.4pt, ultra thick] 
(-3,3) to[out=east, in=west] (3,-3);
\draw[-<-=.5] (3,-3) -- (3,3);
%%%
\draw[fill=black] (3,3) circle [radius=.8];
\draw[fill=black] (3,-3) circle [radius=.8];
%%%
\node at (7,-4)[right]{$\scriptstyle{n}$};
\node at (7,4)[right]{$\scriptstyle{n}$};
\node at (0,3)[above]{$\scriptstyle{n}$};
\node at (0,-3)[below]{$\scriptstyle{n}$};
\node at (3,0)[right]{$\scriptstyle{n}$};}
%%%%%%%%%%
\,\Bigg\rangle_{\! 3}
\]
and
\[
\Bigg\langle\,
%%%%%%%%%%
\tikz[baseline=-.6ex, scale=.1]{
%%%
\draw[triple={[line width=1.4pt, white] in [line width=2.2pt, black] in [line width=5.4pt, white]}] 
(3,3) to[out=north east, in=west] (5,4) -- (7,4);
\draw[triple={[line width=1.4pt, white] in [line width=2.2pt, black] in [line width=5.4pt, white]}] 
(3,-3) to[out=south east, in=west] (5,-4) -- (7,-4);
%%%
\draw[triple={[line width=1.4pt, white] in [line width=2.2pt, black] in [line width=5.4pt, white]}] 
(7,-4) to[out=east, in=west] (13,4);
\draw[triple={[line width=1.4pt, white] in [line width=2.2pt, black] in [line width=5.4pt, white]}] 
(7,4) to[out=east, in=west] (13,-4);
%%%
\draw[-<-=.5] (3,3) -- (-3,3);
\draw[-<-=.5] (-3,-3) -- (3,-3);
\draw[-<-=.5] (3,-3) -- (3,3);
%%%
\draw[fill=black] (3,3) circle [radius=.8];
\draw[fill=black] (3,-3) circle [radius=.8];
%%%
\node at (13,-4)[right]{$\scriptstyle{n}$};
\node at (13,4)[right]{$\scriptstyle{n}$};
\node at (0,3)[above]{$\scriptstyle{n}$};
\node at (0,-3)[below]{$\scriptstyle{n}$};
\node at (3,0)[right]{$\scriptstyle{n}$};}
%%%%%%%%%%
\,\Bigg\rangle_{\! 3}
=q^{-\frac{2n^2+3n}{3}}
\Bigg\langle\,
%%%%%%%%%%
\tikz[baseline=-.6ex, scale=.1]{
%%%
\draw[triple={[line width=1.4pt, white] in [line width=2.2pt, black] in [line width=5.4pt, white]}] 
(3,3) to[out=north east, in=west] (5,4) -- (7,4);
\draw[triple={[line width=1.4pt, white] in [line width=2.2pt, black] in [line width=5.4pt, white]}] 
(3,-3) to[out=south east, in=west] (5,-4) -- (7,-4);
%%%
\draw[-<-=.3, white, double=black, double distance=0.4pt, ultra thick] 
(-3,-3) to[out=east, in=west] (3,3);
\draw[->-=.3, white, double=black, double distance=0.4pt, ultra thick] 
(-3,3) to[out=east, in=west] (3,-3);
\draw[->-=.5] (3,-3) -- (3,3);
%%%
\draw[fill=white] (3,3) circle [radius=.8];
\draw[fill=white] (3,-3) circle [radius=.8];
%%%
\node at (7,-4)[right]{$\scriptstyle{n}$};
\node at (7,4)[right]{$\scriptstyle{n}$};
\node at (0,3)[above]{$\scriptstyle{n}$};
\node at (0,-3)[below]{$\scriptstyle{n}$};
\node at (3,0)[right]{$\scriptstyle{n}$};}
%%%%%%%%%%
\,\Bigg\rangle_{\! 3}
\]
The above equations and Theorem~\ref{A2mfull} derives:
\begin{align*}
\Bigg\langle\,
%%%%%%%%%%
\tikz[baseline=-.6ex, scale=.1]{
%%%
\draw[triple={[line width=1.4pt, white] in [line width=2.2pt, black] in [line width=5.4pt, white]}] 
(3,3) to[out=north east, in=west] (5,4) -- (7,4);
\draw[triple={[line width=1.4pt, white] in [line width=2.2pt, black] in [line width=5.4pt, white]}] 
(-3,3) to[out=north west, in=east] (-5,4) -- (-7,4);
\draw[triple={[line width=1.4pt, white] in [line width=2.2pt, black] in [line width=5.4pt, white]}] 
(3,-3) to[out=south east, in=west] (5,-4) -- (7,-4);
\draw[triple={[line width=1.4pt, white] in [line width=2.2pt, black] in [line width=5.4pt, white]}] 
(-3,-3) to[out=south west, in=east] (-5,-4) -- (-7,-4);
%%%
\draw[triple={[line width=1.4pt, white] in [line width=2.2pt, black] in [line width=5.4pt, white]}] 
(7,-4) to[out=east, in=west] (13,4);
\draw[triple={[line width=1.4pt, white] in [line width=2.2pt, black] in [line width=5.4pt, white]}] 
(7,4) to[out=east, in=west] (13,-4);
\begin{scope}[xshift=6cm]
\node at (10,0) {$\dots$};
\node at (10,-5) [below]{$2l$ crossings};
\end{scope}
\begin{scope}[xshift=12cm]
\draw[triple={[line width=1.4pt, white] in [line width=2.2pt, black] in [line width=5.4pt, white]}] 
(7,-4) to[out=east, in=west] (13,4);
\draw[triple={[line width=1.4pt, white] in [line width=2.2pt, black] in [line width=5.4pt, white]}] 
(7,4) to[out=east, in=west] (13,-4);
\node at (13,-4)[right]{$\scriptstyle{n}$};
\node at (13,4)[right]{$\scriptstyle{n}$};
\end{scope}
%%%
\draw[->-=.5] (3,3) -- (-3,3);
\draw[->-=.5] (-3,3) -- (-3,-3);
\draw[->-=.5] (-3,-3) -- (3,-3);
\draw[->-=.5] (3,-3) -- (3,3);
%%%
\draw[fill=white] (3,3) circle [radius=.8];
\draw[fill=white] (-3,3) circle [radius=.8];
\draw[fill=white] (3,-3) circle [radius=.8];
\draw[fill=white] (-3,-3) circle [radius=.8];
%%%
\node at (-7,-4)[left]{$\scriptstyle{n}$};
\node at (-7,4)[left]{$\scriptstyle{n}$};
\node at (0,3)[above]{$\scriptstyle{n}$};
\node at (0,-3)[below]{$\scriptstyle{n}$};
\node at (3,0)[right]{$\scriptstyle{n}$};
\node at (-3,0)[left]{$\scriptstyle{n}$};}
%%%%%%%%%%
\,\Bigg\rangle_{\! 3}
&=q^{-\frac{2l}{3}(2n^2+3n)}
\Bigg\langle\,
%%%%%%%%%%
\tikz[baseline=-.6ex, scale=.1]{
%%%
\begin{scope}[xshift=12cm]
\draw[triple={[line width=1.4pt, white] in [line width=2.2pt, black] in [line width=5.4pt, white]}] 
(3,3) to[out=north east, in=west] (5,4) -- (7,4);
\draw[triple={[line width=1.4pt, white] in [line width=2.2pt, black] in [line width=5.4pt, white]}] 
(3,-3) to[out=south east, in=west] (5,-4) -- (7,-4);
\end{scope}
\draw[triple={[line width=1.4pt, white] in [line width=2.2pt, black] in [line width=5.4pt, white]}] 
(-3,3) to[out=north west, in=east] (-5,4) -- (-7,4);
\draw[triple={[line width=1.4pt, white] in [line width=2.2pt, black] in [line width=5.4pt, white]}] 
(-3,-3) to[out=south west, in=east] (-5,-4) -- (-7,-4);
%%%
\draw[->-=.3, white, double=black, double distance=0.4pt, ultra thick] 
(-3,-3) to[out=east, in=west] (3,3);
\draw[-<-=.3, white, double=black, double distance=0.4pt, ultra thick] 
(-3,3) to[out=east, in=west] (3,-3);
\begin{scope}[xshift=6cm]
\node at (0,0) {$\dots$};
\node at (0,-5) [below]{$2l$ crossings};
\end{scope}
\begin{scope}[xshift=12cm]
\draw[-<-=.3, white, double=black, double distance=0.4pt, ultra thick] 
(-3,-3) to[out=east, in=west] (3,3);
\draw[->-=.3, white, double=black, double distance=0.4pt, ultra thick] 
(-3,3) to[out=east, in=west] (3,-3);
\end{scope}
%%%
\begin{scope}[xshift=12cm]
\draw[->-=.5] (3,-3) -- (3,3);
\draw[fill=white] (3,3) circle [radius=.8];
\draw[fill=white] (3,-3) circle [radius=.8];
\end{scope}
\draw[->-=.5] (-3,3) -- (-3,-3);
\draw[fill=white] (-3,3) circle [radius=.8];
\draw[fill=white] (-3,-3) circle [radius=.8];
%%%
\begin{scope}[xshift=12cm]
\node at (3,0)[right]{$\scriptstyle{n}$};
\node at (7,4)[right]{$\scriptstyle{n}$};
\node at (7,-4)[right]{$\scriptstyle{n}$};
\end{scope}
\node at (-7,-4)[left]{$\scriptstyle{n}$};
\node at (-7,4)[left]{$\scriptstyle{n}$};
\node at (-3,0)[left]{$\scriptstyle{n}$};}
%%%%%%%%%%
\,\Bigg\rangle_{\! 3}\\
&=q^{-2l(n^2+2n)}
\sum_{0\leq k_l\leq \cdots\leq k_1\leq n}
q^{n-k_l}
q^{\sum_{i=1}^{l}(k_i^2+2k_i)}\\
&\qquad\times\frac{(q)_n}{(q)_{k_l}}
{n \choose k_1',k_2',\dots,k_l',k_l}_{q}
\Bigg\langle\,
%%%%%%%%%%
\tikz[baseline=-.6ex, scale=.1]{
%%%
\begin{scope}[xshift=6cm]
\draw[triple={[line width=1.4pt, white] in [line width=2.2pt, black] in [line width=5.4pt, white]}] 
(3,3) to[out=north east, in=west] (5,4) -- (7,4);
\draw[triple={[line width=1.4pt, white] in [line width=2.2pt, black] in [line width=5.4pt, white]}] 
(3,-3) to[out=south east, in=west] (5,-4) -- (7,-4);
\end{scope}
\draw[triple={[line width=1.4pt, white] in [line width=2.2pt, black] in [line width=5.4pt, white]}] 
(-3,3) to[out=north west, in=east] (-5,4) -- (-7,4);
\draw[triple={[line width=1.4pt, white] in [line width=2.2pt, black] in [line width=5.4pt, white]}] 
(-3,-3) to[out=south west, in=east] (-5,-4) -- (-7,-4);
%%%
\draw (-3,3) -- (-1,3);
\draw (-3,-3) -- (-1,-3);
\draw (9,3) -- (7,3);
\draw (9,-3) -- (7,-3);
\begin{scope}[xshift=6cm]
\draw[->-=.5] (3,-3) -- (3,3);
\draw[fill=white] (3,3) circle [radius=.8];
\draw[fill=white] (3,-3) circle [radius=.8];
\end{scope}
\draw[->-=.5] (-3,3) -- (-3,-3);
\draw[fill=white] (-3,3) circle [radius=.8];
\draw[fill=white] (-3,-3) circle [radius=.8];
%%%
\draw[-<-=.5] (0,4) -- (6,4);
\draw[->-=.5] (0,-4) -- (6,-4);
\draw[-<-=.5] (0,2) to[out=east, in=east] (0,-2);
\draw[->-=.5] (6,2) to[out=west, in=west] (6,-2);
\draw[fill=white] (-1,1) rectangle (0,5);
\draw[fill=white] (-1,-1) rectangle (0,-5);
\draw[fill=white] (7,1) rectangle (6,5);
\draw[fill=white] (7,-1) rectangle (6,-5);
%%%
\begin{scope}[xshift=6cm]
\node at (3,0)[right]{$\scriptstyle{n}$};
\node at (7,4)[right]{$\scriptstyle{n}$};
\node at (7,-4)[right]{$\scriptstyle{n}$};
\end{scope}
\node at (-7,-4)[left]{$\scriptstyle{n}$};
\node at (-7,4)[left]{$\scriptstyle{n}$};
\node at (-3,0)[left]{$\scriptstyle{n}$};
\node at (3,4)[above]{$\scriptstyle{k_l}$};
\node at (3,-4)[below]{$\scriptstyle{k_l}$};
}
%%%%%%%%%%
\,\Bigg\rangle_{\! 3}.
\end{align*}
In the same way, 
we obtain
\begin{align*}
\Bigg\langle\,
%%%%%%%%%%
\tikz[baseline=-.6ex, scale=.1]{
%%%
\draw[triple={[line width=1.4pt, white] in [line width=2.2pt, black] in [line width=5.4pt, white]}] 
(3,3) to[out=north east, in=west] (5,4) -- (7,4);
\draw[triple={[line width=1.4pt, white] in [line width=2.2pt, black] in [line width=5.4pt, white]}] 
(-3,3) to[out=north west, in=east] (-5,4) -- (-7,4);
\draw[triple={[line width=1.4pt, white] in [line width=2.2pt, black] in [line width=5.4pt, white]}] 
(3,-3) to[out=south east, in=west] (5,-4) -- (7,-4);
\draw[triple={[line width=1.4pt, white] in [line width=2.2pt, black] in [line width=5.4pt, white]}] 
(-3,-3) to[out=south west, in=east] (-5,-4) -- (-7,-4);
%%%
\draw[triple={[line width=1.4pt, white] in [line width=2.2pt, black] in [line width=5.4pt, white]}] 
(7,-4) to[out=east, in=west] (13,4);
\draw[triple={[line width=1.4pt, white] in [line width=2.2pt, black] in [line width=5.4pt, white]}] 
(7,4) to[out=east, in=west] (13,-4);
\begin{scope}[xshift=6cm]
\node at (10,0) {$\dots$};
\node at (10,-5) [below]{$2l$ crossings};
\end{scope}
\begin{scope}[xshift=12cm]
\draw[triple={[line width=1.4pt, white] in [line width=2.2pt, black] in [line width=5.4pt, white]}] 
(7,-4) to[out=east, in=west] (13,4);
\draw[triple={[line width=1.4pt, white] in [line width=2.2pt, black] in [line width=5.4pt, white]}] 
(7,4) to[out=east, in=west] (13,-4);
\node at (13,-4)[right]{$\scriptstyle{n}$};
\node at (13,4)[right]{$\scriptstyle{n}$};
\end{scope}
%%%
\draw[-<-=.5] (3,3) -- (-3,3);
\draw[-<-=.5] (-3,3) -- (-3,-3);
\draw[-<-=.5] (-3,-3) -- (3,-3);
\draw[-<-=.5] (3,-3) -- (3,3);
%%%
\draw[fill=black] (3,3) circle [radius=.8];
\draw[fill=black] (-3,3) circle [radius=.8];
\draw[fill=black] (3,-3) circle [radius=.8];
\draw[fill=black] (-3,-3) circle [radius=.8];
%%%
\node at (-7,-4)[left]{$\scriptstyle{n}$};
\node at (-7,4)[left]{$\scriptstyle{n}$};
\node at (0,3)[above]{$\scriptstyle{n}$};
\node at (0,-3)[below]{$\scriptstyle{n}$};
\node at (3,0)[right]{$\scriptstyle{n}$};
\node at (-3,0)[left]{$\scriptstyle{n}$};}
%%%%%%%%%%
\,\Bigg\rangle_{\! 3}
&=q^{-2l(n^2+2n)}
\sum_{0\leq k_l\leq \cdots\leq k_1\leq n}
q^{n-k_l}
q^{\sum_{i=1}^{l}(k_i^2+2k_i)}\\
&\qquad\times\frac{(q)_n}{(q)_{k_l}}
{n \choose k_1',k_2',\dots,k_l',k_l}_{q}
\Bigg\langle\,
%%%%%%%%%%
\tikz[baseline=-.6ex, scale=.1]{
%%%
\begin{scope}[xshift=6cm]
\draw[triple={[line width=1.4pt, white] in [line width=2.2pt, black] in [line width=5.4pt, white]}] 
(3,3) to[out=north east, in=west] (5,4) -- (7,4);
\draw[triple={[line width=1.4pt, white] in [line width=2.2pt, black] in [line width=5.4pt, white]}] 
(3,-3) to[out=south east, in=west] (5,-4) -- (7,-4);
\end{scope}
\draw[triple={[line width=1.4pt, white] in [line width=2.2pt, black] in [line width=5.4pt, white]}] 
(-3,3) to[out=north west, in=east] (-5,4) -- (-7,4);
\draw[triple={[line width=1.4pt, white] in [line width=2.2pt, black] in [line width=5.4pt, white]}] 
(-3,-3) to[out=south west, in=east] (-5,-4) -- (-7,-4);
%%%
\draw (-3,3) -- (-1,3);
\draw (-3,-3) -- (-1,-3);
\draw (9,3) -- (7,3);
\draw (9,-3) -- (7,-3);
\begin{scope}[xshift=6cm]
\draw[-<-=.5] (3,-3) -- (3,3);
\draw[fill=black] (3,3) circle [radius=.8];
\draw[fill=black] (3,-3) circle [radius=.8];
\end{scope}
\draw[-<-=.5] (-3,3) -- (-3,-3);
\draw[fill=black] (-3,3) circle [radius=.8];
\draw[fill=black] (-3,-3) circle [radius=.8];
%%%
\draw[->-=.5] (0,4) -- (6,4);
\draw[-<-=.5] (0,-4) -- (6,-4);
\draw[->-=.5] (0,2) to[out=east, in=east] (0,-2);
\draw[-<-=.5] (6,2) to[out=west, in=west] (6,-2);
\draw[fill=white] (-1,1) rectangle (0,5);
\draw[fill=white] (-1,-1) rectangle (0,-5);
\draw[fill=white] (7,1) rectangle (6,5);
\draw[fill=white] (7,-1) rectangle (6,-5);
%%%
\begin{scope}[xshift=6cm]
\node at (3,0)[right]{$\scriptstyle{n}$};
\node at (7,4)[right]{$\scriptstyle{n}$};
\node at (7,-4)[right]{$\scriptstyle{n}$};
\end{scope}
\node at (-7,-4)[left]{$\scriptstyle{n}$};
\node at (-7,4)[left]{$\scriptstyle{n}$};
\node at (-3,0)[left]{$\scriptstyle{n}$};
\node at (3,4)[above]{$\scriptstyle{k_l}$};
\node at (3,-4)[below]{$\scriptstyle{k_l}$};
}
%%%%%%%%%%
\,\Bigg\rangle_{\! 3}.
\end{align*}
The closure of $A_2$ webs appearing on the right-hand side of the above two equations are the same planar web with the opposite orientation each other by Lemma~\ref{change}. 
Therefore, 
these webs have the same value.
We compute the value of the closure. 
For $0\leq k_l\leq n$,
\begin{align*}
\Bigg\langle
%%%%%%%%%%
\tikz[baseline=-.6ex, scale=.1]{
%%%
\draw[triple={[line width=1.4pt, white] in [line width=2.2pt, black] in [line width=5.4pt, white]}] 
(-7,4) to[out=west, in=west](-7,8);
\draw[triple={[line width=1.4pt, white] in [line width=2.2pt, black] in [line width=5.4pt, white]}] 
(-7,-4) to[out=west, in=west](-7,-8);
\draw[triple={[line width=1.4pt, white] in [line width=2.2pt, black] in [line width=5.4pt, white]}] 
(13,4) to[out=east, in=east](13,8);
\draw[triple={[line width=1.4pt, white] in [line width=2.2pt, black] in [line width=5.4pt, white]}] 
(13,-4) to[out=east, in=east](13,-8);
\draw[triple={[line width=1.4pt, white] in [line width=2.2pt, black] in [line width=5.4pt, white]}] 
(-7,8) -- (13,8);
\draw[triple={[line width=1.4pt, white] in [line width=2.2pt, black] in [line width=5.4pt, white]}] 
(-7,-8) -- (13,-8);
\begin{scope}[xshift=6cm]
\draw[triple={[line width=1.4pt, white] in [line width=2.2pt, black] in [line width=5.4pt, white]}] 
(3,3) to[out=north east, in=west] (5,4) -- (7,4);
\draw[triple={[line width=1.4pt, white] in [line width=2.2pt, black] in [line width=5.4pt, white]}] 
(3,-3) to[out=south east, in=west] (5,-4) -- (7,-4);
\end{scope}
\draw[triple={[line width=1.4pt, white] in [line width=2.2pt, black] in [line width=5.4pt, white]}] 
(-3,3) to[out=north west, in=east] (-5,4) -- (-7,4);
\draw[triple={[line width=1.4pt, white] in [line width=2.2pt, black] in [line width=5.4pt, white]}] 
(-3,-3) to[out=south west, in=east] (-5,-4) -- (-7,-4);
%%%
\draw (-3,3) -- (-1,3);
\draw (-3,-3) -- (-1,-3);
\draw (9,3) -- (7,3);
\draw (9,-3) -- (7,-3);
\begin{scope}[xshift=6cm]
\draw[->-=.5] (3,-3) -- (3,3);
\draw[fill=white] (3,3) circle [radius=.8];
\draw[fill=white] (3,-3) circle [radius=.8];
\end{scope}
\draw[->-=.5] (-3,3) -- (-3,-3);
\draw[fill=white] (-3,3) circle [radius=.8];
\draw[fill=white] (-3,-3) circle [radius=.8];
%%%
\draw[-<-=.5] (0,4) -- (6,4);
\draw[->-=.5] (0,-4) -- (6,-4);
\draw[-<-=.5] (0,2) to[out=east, in=east] (0,-2);
\draw[->-=.5] (6,2) to[out=west, in=west] (6,-2);
\draw[fill=white] (-1,1) rectangle (0,5);
\draw[fill=white] (-1,-1) rectangle (0,-5);
\draw[fill=white] (7,1) rectangle (6,5);
\draw[fill=white] (7,-1) rectangle (6,-5);
%%%
\begin{scope}[xshift=6cm]
\node at (3,0)[right]{$\scriptstyle{n}$};
\node at (7,4)[right]{$\scriptstyle{n}$};
\node at (7,-4)[right]{$\scriptstyle{n}$};
\end{scope}
\node at (-7,-4)[left]{$\scriptstyle{n}$};
\node at (-7,4)[left]{$\scriptstyle{n}$};
\node at (-3,0)[left]{$\scriptstyle{n}$};
\node at (3,4)[above]{$\scriptstyle{k_l}$};
\node at (3,-4)[below]{$\scriptstyle{k_l}$};
}
%%%%%%%%%%
\,\Bigg\rangle_{\! 3}
&=
\Bigg\langle
%%%%%%%%%%
\tikz[baseline=-.6ex, scale=.1]{
%%%
\draw[triple={[line width=1.4pt, white] in [line width=2.2pt, black] in [line width=5.4pt, white]}] 
(-7,4) to[out=west, in=west](-7,8);
\draw[triple={[line width=1.4pt, white] in [line width=2.2pt, black] in [line width=5.4pt, white]}] 
(-7,-4) to[out=west, in=west](-7,-8);
\draw[triple={[line width=1.4pt, white] in [line width=2.2pt, black] in [line width=5.4pt, white]}] 
(13,4) to[out=east, in=east](13,8);
\draw[triple={[line width=1.4pt, white] in [line width=2.2pt, black] in [line width=5.4pt, white]}] 
(13,-4) to[out=east, in=east](13,-8);
\draw[triple={[line width=1.4pt, white] in [line width=2.2pt, black] in [line width=5.4pt, white]}] 
(-7,8) -- (13,8);
\draw[triple={[line width=1.4pt, white] in [line width=2.2pt, black] in [line width=5.4pt, white]}] 
(-7,-8) -- (13,-8);
%%%
\draw[-<-=.5] (-6,5.5) -- (12,5.5);
\draw[->-=.5] (-6,-5.5) -- (12,-5.5);
\draw[-<-=.5] (-6,4.5) to[out=east, in=north] (-2,0)
to[out=south, in=east] (-6,-4.5);
\draw[->-=.5] (12,4.5) to[out=west, in=north] (8,0)
to[out=south, in=west] (12,-4.5);
\draw[->-=.5] (-6,3) to[out=east, in=east] (-6,-3);
\draw[-<-=.5] (12,3) to[out=west, in=west] (12,-3);
\draw[fill=white] (-7,2) rectangle (-6,6);
\draw[fill=white] (-7,-2) rectangle (-6,-6);
\draw[fill=white] (13,2) rectangle (12,6);
\draw[fill=white] (13,-2) rectangle (12,-6);
\draw (-7,4) -- (-6,4);
\draw (-7,-4) -- (-6,-4);
\draw (13,4) -- (12,4);
\draw (13,-4) -- (12,-4);
%%%
\begin{scope}[xshift=6cm]
\node at (7,4)[right]{$\scriptstyle{n}$};
\node at (7,-4)[right]{$\scriptstyle{n}$};
\end{scope}
\node at (-6,0) {$\scriptstyle{n}$};
\node at (12,0) {$\scriptstyle{n}$};
\node at (-7,-4)[left]{$\scriptstyle{n}$};
\node at (-7,4)[left]{$\scriptstyle{n}$};
\node at (-2.5,1)[right]{$\scriptstyle{n-k_l}$};
\node at (8.5,-1)[left]{$\scriptstyle{n-k_l}$};
\node at (3,6)[below]{$\scriptstyle{k_l}$};
\node at (3,-6)[above]{$\scriptstyle{k_l}$};
}
%%%%%%%%%%
\,\Bigg\rangle_{\! 3}
=
\Bigg\langle
%%%%%%%%%%
\tikz[baseline=-.6ex, scale=.1]{
%%%
\draw[->-=.5, rounded corners] (0,7) -- (-6,7) -- (-6,-7) -- (0,-7);
\draw[-<-=.5, rounded corners] (0,7) -- (6,7) -- (6,-7) -- (0,-7);
\draw[-<-=.5, rounded corners] (0,4) -- (-4,4) -- (-4,-4) -- (0,-4);
\draw[->-=.5, rounded corners] (0,4) -- (4,4) -- (4,-4) -- (0,-4);
\draw[-<-=.5, rounded corners] (0,3) -- (-2,3) -- (-2,1) -- (2,1) -- (2,3) -- (0,3);
\draw[->-=.5, rounded corners] (0,-3) -- (-2,-3) -- (-2,-1) -- (2,-1) -- (2,-3) -- (0,-3);
%%%
\draw[fill=white] (-.5,2) rectangle (.5,8);
\draw[fill=white] (-.5,-2) rectangle (.5,-8);
\draw (-.5,5) -- (.5,5);
\draw (-.5,-5) -- (.5,-5);
%%%
\node at (-6,0) [left]{$\scriptstyle{n}$};
\node at (6,0) [right]{$\scriptstyle{n}$};
\node at (-2,.5){$\scriptstyle{k_l}$};
\node at (2,-.5){$\scriptstyle{k_l}$};
}
%%%%%%%%%%
\,\Bigg\rangle_{\! 3}\\
&=\sum_{s=0}^n(-1)^s\frac{{n \brack s}^2}{{2n+1 \brack s}}
\Bigg\langle
%%%%%%%%%%
\tikz[baseline=-.6ex, scale=.1]{
%%%
\draw[->-=.5, rounded corners] (-6,9) -- (-8,9) -- (-8,-7) -- (0,-7);
\draw[-<-=.5, rounded corners] (6,9) -- (8,9) -- (8,-7) -- (0,-7);
\draw[-<-=.5, rounded corners] (-4,4) -- (-4,-4) -- (0,-4);
\draw[->-=.5, rounded corners] (4,4) -- (4,-4) -- (0,-4);
\draw[->-=.5, rounded corners] (-2,4) -- (-2,5) -- (2,5) -- (2,4);
\draw[-<-=.5, rounded corners] (-2,3) -- (-2,2) -- (2,2) -- (2,3);
\draw[-<-=.5] (-5,10) -- (5,10);
\draw[rounded corners] (-5,8) -- (-4,8) -- (-4,4);
\draw[rounded corners] (5,8) -- (4,8) -- (4,4);
\draw[rounded corners] (0,-3) -- (-2,-3) -- (-2,-1) -- (2,-1) -- (2,-3) -- (0,-3);
%%%
\draw[fill=white] (-6,7) rectangle (-5,11);
\draw[fill=white] (6,7) rectangle (5,11);
\draw[fill=white] (-5,3) rectangle (-1,4);
\draw[fill=white] (5,3) rectangle (1,4);
\draw[fill=white] (-.5,-2) rectangle (.5,-8);
\draw (-.5,-5) -- (.5,-5);
%%%
\node at (0,9) {$\scriptstyle{n-s}$};
\node at (0,4) [above]{$\scriptstyle{n-s}$};
\node at (-8,0) [left]{$\scriptstyle{n}$};
\node at (8,0) [right]{$\scriptstyle{n}$};
\node at (-2,1){$\scriptstyle{k_l}$};
\node at (2,-.5){$\scriptstyle{k_l}$};
}
%%%%%%%%%%
\,\Bigg\rangle_{\! 3}\\
&=\sum_{s=0}^n(-1)^s\frac{{n \brack s}^2}{{2n+1 \brack s}}
\left(\sum_{t=\max\{n-s,k_l\}}^{\min\{n-s+k_l,n\}}\frac{{n\brack t}^2{t\brack k_l}{t\brack n-s}{2n-t+2\brack k_l+n-s-t}}{{n\brack k_l}^2{n\brack n-s}^2}
\Bigg\langle
%%%%%%%%%%
\tikz[baseline=-.6ex, scale=.1]{
%%%
\draw[->-=.5, rounded corners] (-6,6) -- (-8,6) -- (-8,-6) -- (0,-6);
\draw[-<-=.5, rounded corners] (6,6) -- (8,6) -- (8,-6) -- (0,-6);
\draw (-5,7) -- (5,7);
\draw[-<-=.5, rounded corners] (-5,5) -- (-4,5) -- (-4,1) -- (-4,-3) -- (0,-3);
\draw[->-=.5, rounded corners] (5,5) -- (4,5) -- (4,1) -- (4,-3) -- (0,-3);
\draw[->-=.5, rounded corners] (0,-2) -- (-2,-2) -- (-2,0) -- (2,0) -- (2,-2) -- (0,-2);
%%%
\draw[fill=white] (-6,4) rectangle (-5,8);
\draw[fill=white] (6,4) rectangle (5,8);
\draw[fill=white] (-.5,-1) rectangle (.5,-7);
\draw (-.5,-4) -- (.5,-4);
%%%
\node at (0,6) {$\scriptstyle{t}$};
\node at (-8,0) [left]{$\scriptstyle{n}$};
\node at (8,0) [right]{$\scriptstyle{n}$};
\node at (0,2) {$\scriptstyle{t}$};
}
%%%%%%%%%%
\,\Bigg\rangle_{\! 3}\right)\\
&=\sum_{n-k_l\leq n-s\leq n}(-1)^s\frac{{n \brack s}{n+2\brack k_l-s}}{{2n+1 \brack s}{n\brack k_l}}
\Big\langle\,
%%%%%%%%%%
\tikz[baseline=-.6ex, scale=0.1]{
\draw[-<-=.5] (0,0) circle [radius=2];
\draw[->-=.5] (0,0) circle [radius=4];
\draw[fill=white] (1,-.5) rectangle (5,.5);
\draw (3,-.5) -- (3,.5);
\node at (0,0){$\scriptstyle{n}$};
\node at (0,5){$\scriptstyle{n}$};
}
%%%%%%%%%%
\,\Big\rangle_{\! 3}.
\end{align*}
We used Definition~\ref{doubleA2clasp} in the third equation, 
Theorem~\ref{A2bubble} in the fourth equation, and Lemma~\ref{doubleA2claspprop} in the last equation. 
We remark that 
$\Big\langle\,
%%%%%%%%%%
\tikz[baseline=-.6ex, scale=0.1]{
\draw[-<-=.5] (0,0) circle [radius=2];
\draw[->-=.5] (0,0) circle [radius=4];
\draw[fill=white] (1,-.5) rectangle (5,.5);
\draw (3,-.5) -- (3,.5);
\node at (0,0){$\scriptstyle{n}$};
\node at (0,5){$\scriptstyle{n}$};
}
%%%%%%%%%%
\,\Big\rangle_{\! 3}
=\frac{\left[n+1\right]^2\left[2n+2\right]}{\left[2\right]}$ 
(see, for example, Lemma~5.6 in \cite{OhtsukiYamada97}).
Consequently, 
\begin{align*}
\left[\bar{\operatorname{ST}}(1,2l)\right]_{(n,n)}
&=2q^{-2l(n^2+2n)}
\sum_{0\leq k_l\leq \cdots\leq k_1\leq n}
q^{n-k_l}q^{\sum_{i=1}^{l}(k_i^2+2k_i)}\frac{(q)_n}{(q)_{k_l}}
{n \choose k_1',k_2',\dots,k_l',k_l}_{q}\\
&\qquad\times
\left(
\sum_{0\leq s\leq k_l}(-1)^s\frac{{n \brack s}{n+2\brack k_l-s}}{{2n+1 \brack s}{n\brack k_l}}
\Big\langle\,
%%%%%%%%%%
\tikz[baseline=-.6ex, scale=0.1]{
\draw[-<-=.5] (0,0) circle [radius=2];
\draw[->-=.5] (0,0) circle [radius=4];
\draw[fill=white] (1,-.5) rectangle (5,.5);
\draw (3,-.5) -- (3,.5);
\node at (0,0){$\scriptstyle{n}$};
\node at (0,5){$\scriptstyle{n}$};
}
%%%%%%%%%%
\,\Big\rangle_{\! 3}
\right)\\
&=2q^{-2l(n^2+2n)}
\sum_{0\leq s\leq k_l\leq \cdots\leq k_1\leq n}
(-1)^sq^{n-k_l}q^{\sum_{i=1}^{l}(k_i^2+2k_i)}q^{s(n-k_l)-k_l}q^{\frac{s^2+3s}{2}}\\
&\qquad\times
\frac{(q)_n}{(q)_{k_l}}{n \choose k_1',k_2',\dots,k_l',k_l}_{q}
\frac{{n \choose s}_q{n+2\choose k_l-s}_q}{{2n+1 \choose s}_q{n\choose k_l}_q}
\Big\langle\,
%%%%%%%%%%
\tikz[baseline=-.6ex, scale=0.1]{
\draw[-<-=.5] (0,0) circle [radius=2];
\draw[->-=.5] (0,0) circle [radius=4];
\draw[fill=white] (1,-.5) rectangle (5,.5);
\draw (3,-.5) -- (3,.5);
\node at (0,0){$\scriptstyle{n}$};
\node at (0,5){$\scriptstyle{n}$};
}
%%%%%%%%%%
\,\Big\rangle_{\! 3}.
\end{align*}

\subsection*{Acknowledgment}
The author would like to express his gratitude to his adviser, Hisaaki Endo, for his encouragement.

\bibliographystyle{amsalpha}
\bibliography{KV}
\end{document}